\documentclass{amsart}

\usepackage[english]{babel}
\usepackage{csquotes}
\usepackage[style=alphabetic,
            isbn=false,
            doi=false,
            url=false,
            maxnames=10,
            maxalphanames=5
          ]{biblatex}
\renewbibmacro{in:}{}
\usepackage{amsmath,amsfonts,amsthm,mathtools, amssymb}
\usepackage[T1]{fontenc}
\usepackage{xcolor}
\usepackage{tikz-cd}
\usepackage{hyperref}
\hypersetup{hidelinks, colorlinks, citecolor=blue}
\usepackage{aliascnt}
\usepackage{xparse}
\usepackage{tensor}
\usepackage{caption}
\usepackage{mathrsfs}
\usepackage{tabto}
\usepackage{imakeidx}
\usepackage[normalem]{ulem}
\usepackage{upgreek}
\usepackage{stmaryrd}
\usepackage[all]{xy}
\usepackage[normalem]{ulem}
\usepackage{comment}

\bibliography{sources}

\usepackage{tikz}
\usetikzlibrary{matrix,arrows,calc}
\usetikzlibrary{positioning}
\usetikzlibrary{decorations.pathreplacing,decorations.markings,decorations.pathmorphing}
\usetikzlibrary{positioning,arrows,patterns}
\usetikzlibrary{cd}
\usetikzlibrary{intersections}

\tikzset{
	every loop/.style={very thick},
	comp/.style={circle,fill,black,,inner sep=0pt,minimum size=5pt},
	order bottom left/.style={pos=.05,left,font=\tiny},
	order top left/.style={pos=.9,left,font=\tiny},
	order bottom right/.style={pos=.05,right,font=\tiny},
	order top right/.style={pos=.9,right,font=\tiny},
	order node dis/.style={text width=.75cm},
	circled number/.style={circle, draw, inner sep=0pt, minimum size=12pt},
	below left with distance/.style={below left,text height=10pt},
    below right with distance/.style={below right,text height=10pt}
	}

\makeatletter
\ifcsname phantomsection\endcsname
    \newcommand*{\@gobblenexttocentry}[9]{}
\else
    \newcommand*{\@gobblenexttocentry}[4]{}
\fi
\newcommand*{\addsubsection}{%
    \addtocontents{toc}{\protect\@gobblenexttocentry}%
    \subsection*}
\makeatother

\begin{document}

\def\subsectionautorefname{Section}
\def\subsubsectionautorefname{Section}
\def\sectionautorefname{Section}
\def\equationautorefname~#1\null{(#1)\null}


\newcommand{\mynewtheorem}[4]{
  \if\relax\detokenize{#3}\relax 
    \if\relax\detokenize{#4}\relax 
      \newtheorem{#1}{#2}
    \else
      \newtheorem{#1}{#2}[#4]
    \fi
  \else
    \newaliascnt{#1}{#3}
    \newtheorem{#1}[#1]{#2}
    \aliascntresetthe{#1}
  \fi
  \expandafter\def\csname #1autorefname\endcsname{#2}
}

\newcommand{\mcomment}[1]{\textcolor{red}{#1}}
\newcommand{\dcomment}[1]{\textcolor{blue}{#1}}

\mynewtheorem{theorem}{Theorem}{}{section}
\mynewtheorem{lemma}{Lemma}{theorem}{}
\mynewtheorem{prop}{Proposition}{lemma}{}
\mynewtheorem{cor}{Corollary}{lemma}{}
\mynewtheorem{question}{Question}{lemma}{}
\mynewtheorem{assumption}{Assumption}{lemma}{}
\mynewtheorem{example}{Example}{lemma}{}

\theoremstyle{remark}
\mynewtheorem{rem}{Remark}{lemma}{}
\theoremstyle{definition}
\mynewtheorem{definition}{Definition}{lemma}{}
\mynewtheorem{reminder}{Reminder}{lemma}{}

\numberwithin{equation}{section}

\def\defbb#1{\expandafter\def\csname b#1\endcsname{\mathbb{#1}}}
\def\defcal#1{\expandafter\def\csname c#1\endcsname{\mathcal{#1}}}
\def\deffrak#1{\expandafter\def\csname frak#1\endcsname{\mathfrak{#1}}}
\def\defop#1{\expandafter\def\csname#1\endcsname{\operatorname{#1}}}
\def\defbf#1{\expandafter\def\csname b#1\endcsname{\mathbf{#1}}}

\makeatletter
\def\defcals#1{\@defcals#1\@nil}
\def\@defcals#1{\ifx#1\@nil\else\defcal{#1}\expandafter\@defcals\fi}
\def\deffraks#1{\@deffraks#1\@nil}
\def\@deffraks#1{\ifx#1\@nil\else\deffrak{#1}\expandafter\@deffraks\fi}
\def\defbbs#1{\@defbbs#1\@nil}
\def\@defbbs#1{\ifx#1\@nil\else\defbb{#1}\expandafter\@defbbs\fi}
\def\defbfs#1{\@defbfs#1\@nil}
\def\@defbfs#1{\ifx#1\@nil\else\defbf{#1}\expandafter\@defbfs\fi}
\def\defops#1{\@defops#1,\@nil}
\def\@defops#1,#2\@nil{\if\relax#1\relax\else\defop{#1}\fi\if\relax#2\relax\else\expandafter\@defops#2\@nil\fi}
\makeatother

\defbbs{ZHQCNPALRVWBEVWU}
\defcals{DOPQMNXYLTRAEHZKCFIUZG}
\deffraks{apijklmnopqueR}
\defops{PGL,SL,Sp,mod,Spec,Re,Gal,Tr,End,GL,Hom,PSL,H,div,Aut,rk,Mod,R,T,Tr,Mat,Vol,MV,Res,vol,Z,diag,Hyp,ord,Im,ev,U,dev,c,CH,fin,pr,Pic,lcm,ch,td,LG,id,Sym,Aut,PU}
\defbfs{kiuvzwp} 

\def\ep{\varepsilon}
\def\abs#1{\lvert#1\rvert}
\def\dd{\mathrm{d}}
\def\inj{\hookrightarrow}
\def\eq{=}
\newcommand{\hyp}{{\rm hyp}}
\newcommand{\odd}{{\rm odd}}

\def\i{\mathrm{i}}
\def\e{\mathrm{e}}
\def\st{\mathrm{st}}
\def\ct{\mathrm{ct}}

\def\uC{\underline{\bC}}
\def\ol{\overline}
  
\def\Vrel{\bV^{\mathrm{rel}}}
\def\Wrel{\bW^{\mathrm{rel}}}
\def\twolev{\mathrm{LG_1(B)}}

\def\be{\begin{equation}}   \def\ee{\end{equation}}     \def\bes{\begin{equation*}}    \def\ees{\end{equation*}}
\def\ba{\be\begin{aligned}} \def\ea{\end{aligned}\ee}   \def\bas{\bes\begin{aligned}}  \def\eas{\end{aligned}\ees}
\def\={\;=\;}  \def\+{\,+\,} \def\m{\,-\,}

\newcommand*{\proj}{\mathbb{P}}
\newcommand{\barmoduli}[1][g]{{\overline{\mathcal M}}_{#1}}
\newcommand{\moduli}[1][g]{{\mathcal M}_{#1}}
\newcommand{\omoduli}[1][g]{{\Omega\mathcal M}_{#1}}
\newcommand{\modulin}[1][g,n]{{\mathcal M}_{#1}}
\newcommand{\omodulin}[1][g,n]{{\Omega\mathcal M}_{#1}}
\newcommand{\zomoduli}[1][]{{\mathcal H}_{#1}}
\newcommand{\barzomoduli}[1][]{{\overline{\mathcal H}_{#1}}}
\newcommand{\pomoduli}[1][g]{{\proj\Omega\mathcal M}_{#1}}
\newcommand{\pomodulin}[1][g,n]{{\proj\Omega\mathcal M}_{#1}}
\newcommand{\pobarmoduli}[1][g]{{\proj\Omega\overline{\mathcal M}}_{#1}}
\newcommand{\pobarmodulin}[1][g,n]{{\proj\Omega\overline{\mathcal M}}_{#1}}
\newcommand{\potmoduli}[1][g]{\proj\Omega\tilde{\mathcal{M}}_{#1}}
\newcommand{\obarmoduli}[1][g]{{\Omega\overline{\mathcal M}}_{#1}}
\newcommand{\obarmodulio}[1][g]{{\Omega\overline{\mathcal M}}_{#1}^{0}}
\newcommand{\otmoduli}[1][g]{\Omega\tilde{\mathcal{M}}_{#1}}
\newcommand{\pom}[1][g]{\proj\Omega{\mathcal M}_{#1}}
\newcommand{\pobarm}[1][g]{\proj\Omega\overline{\mathcal M}_{#1}}
\newcommand{\pobarmn}[1][g,n]{\proj\Omega\overline{\mathcal M}_{#1}}
\newcommand{\princbound}{\partial\mathcal{H}}
\newcommand{\omoduliinc}[2][g,n]{{\Omega\mathcal M}_{#1}^{{\rm inc}}(#2)}
\newcommand{\obarmoduliinc}[2][g,n]{{\Omega\overline{\mathcal M}}_{#1}^{{\rm inc}}(#2)}
\newcommand{\pobarmoduliinc}[2][g,n]{{\proj\Omega\overline{\mathcal M}}_{#1}^{{\rm inc}}(#2)}
\newcommand{\otildemoduliinc}[2][g,n]{{\Omega\widetilde{\mathcal M}}_{#1}^{{\rm inc}}(#2)}
\newcommand{\potildemoduliinc}[2][g,n]{{\proj\Omega\widetilde{\mathcal M}}_{#1}^{{\rm inc}}(#2)}
\newcommand{\omoduliincp}[2][g,\lbrace n \rbrace]{{\Omega\mathcal M}_{#1}^{{\rm inc}}(#2)}
\newcommand{\obarmoduliincp}[2][g,\lbrace n \rbrace]{{\Omega\overline{\mathcal M}}_{#1}^{{\rm inc}}(#2)}
\newcommand{\obarmodulin}[1][g,n]{{\Omega\overline{\mathcal M}}_{#1}}
\newcommand{\LTH}[1][g,n]{{K \overline{\mathcal M}}_{#1}}
\newcommand{\PLS}[1][g,n]{{\bP\Xi \mathcal M}_{#1}}

\newcommand*{\can}{\mathrm{can}}
\newcommand*{\sing}{\mathrm{sing}}
\newcommand*{\reg}{\mathrm{reg}}
\newcommand*{\Ex}{\mathrm{Ex}}
\newcommand*{\Spin}{\mathrm{Spin}}
\newcommand*{\SU}{\mathrm{SU}}
\newcommand*{\sm}{\mathrm{sm}}
\newcommand*{\codim}{\mathrm{codim}}
\newcommand*{\Proj}{\mathrm{Proj}}
\newcommand*{\ad}{\mathrm{ad}}
\newcommand*{\Div}{\mathrm{Div}}
\newcommand*{\Ad}{\mathrm{Ad}}
\newcommand*{\SO}{\mathrm{SO}}

\newcommand*{\Tw}[1][\Lambda]{\mathrm{Tw}_{#1}}  
\newcommand*{\sTw}[1][\Lambda]{\mathrm{Tw}_{#1}^s}  

\newcommand{\bfa}{{\bf a}}
\newcommand{\bfb}{{\bf b}}
\newcommand{\bfd}{{\bf d}}
\newcommand{\bfe}{{\bf e}}
\newcommand{\bff}{{\bf f}}
\newcommand{\bfg}{{\bf g}}
\newcommand{\bfh}{{\bf h}}
\newcommand{\bfm}{{\bf m}}
\newcommand{\bfn}{{\bf n}}
\newcommand{\bfp}{{\bf p}}
\newcommand{\bfq}{{\bf q}}
\newcommand{\bfP}{{\bf P}}
\newcommand{\bfR}{{\bf R}}
\newcommand{\bfU}{{\bf U}}
\newcommand{\bfu}{{\bf u}}
\newcommand{\bfz}{{\bf z}}

\newcommand{\bfl}{{\boldsymbol{\ell}}}
\newcommand{\bfmu}{{\boldsymbol{\mu}}}
\newcommand{\bfeta}{{\boldsymbol{\eta}}}
\newcommand{\bfomega}{{\boldsymbol{\omega}}}

\newcommand{\wh}{\widehat}
\newcommand{\wt}{\widetilde}

\newcommand{\ps}{\mathrm{ps}}  
\newcommand{\gr}{\mathrm{gr}}  

\newcommand{\tdpm}[1][{\Gamma}]{\mathfrak{W}_{\operatorname{pm}}(#1)}
\newcommand{\tdps}[1][{\Gamma}]{\mathfrak{W}_{\operatorname{ps}}(#1)}

\newlength{\halfbls}\setlength{\halfbls}{.5\baselineskip}
\newlength{\halbls}\setlength{\halfbls}{.5\baselineskip}

\newcommand*{\Hrel}{\cH_{\text{rel}}^1}
\newcommand*{\Hrelbar}{\overline{\cH}^1_{\text{rel}}}

\newcommand*\interior[1]{\mathring{#1}}

\newcommand{\prodt}[1][\lceil j \rceil]{t_{#1}}
\newcommand{\prodtL}[1][\lceil L \rceil]{t_{#1}}



\newcommand{\remove}[1]{}
\newcommand{\sieg}{\mathbb{H}}
\newcommand{\Hg}{\operatorname{Hg}}
\newcommand{\tr}{\operatorname{Tr}}
\newcommand{\MT}{\operatorname{MT}}
\newcommand{\ra}{\rightarrow}
\def\erre{\mathbb{R}}
\newcommand{\C}{\mathbb{C}}
\newcommand{\Zeta}{{\mathbb{Z}}}
\def\esse{\mathbb{S}}
\newcommand{\A}{\mathcal{A}}
\newcommand{\rosso}[1]{{\color{red}#1}}
\newcommand{\blu}[1]{{\color{blue}#1}}

\title[Arakelov inequalities and characterization of ball quotients]
{Arakelov inequalities and characterization of totally geodesic ball quotients in $\mathcal{A}_g$}

\author{Matteo Costantini}
\address{Essener Seminar für Algebraische Geometrie und Arithmetik, Fakultät für Mathematik, Universit\"at Duisburg-Essen, 45117 Essen}
\email{matteo.costantini@uni-due.de}

\author{Daniel Greb}
\address{Essener Seminar für Algebraische Geometrie und Arithmetik, Fakultät für Mathematik, Universit\"at Duisburg-Essen, 45117 Essen}
\email{daniel.greb@uni-due.de}

\author{Carolina Tamborini}
\address{Essener Seminar für Algebraische Geometrie und Arithmetik, Fakultät für Mathematik, Universit\"at Duisburg-Essen, 45117 Essen}
\email{carolina.tamborini@uni-due.de}

\thanks{The research of the authors has been partially supported by the DFG-Research Training Group 2553 “Symmetries and classifying spaces: analytic, arithmetic, and derived”. During the preparation of this paper 
MC was supported by the DFG Eigene Stelle project  520961294, ``Die Geometrie der Strata von Differentialen”, while DG has been partially supported by the ANR-DFG project QuaSiDy. CT is a member of GNSAGA (INdAM)}

\date{\today}

\subjclass[2020]{}

\begin{abstract} We establish an Arakelov inequality for variations of Hodge structures underlying families of principally polarized complex Abelian varieties. In the equality case, this leads to a numerical characterization of certain totally geodesic ball quotients inside the moduli space of Abelian varieties.  Our result extends work of M\"oller--Viehweg--Zuo by removing the strong positivity conditions imposed in their statement. For families over compact base spaces, our proof involves showing that the period map associated with a family of Abelian varieties factors through certain MMP operations and then generalizing the results of M\"oller, Viehweg, and Zuo to an appropriate singular setting. In the quasiprojective surface case, we implement a new approach that does not pass through Miyaoka-Yau-type uniformisation theorems but uses an argument going back to an idea of Mok, symmetric space theory and semistability considerations instead.
\end{abstract}
\maketitle
\tableofcontents

\section{Introduction}

 In this paper we give an intrinsic numerical characterization of totally geodesic ball quotients inside the moduli space $\A_g$ of complex principally polarized Abelian varieties of dimension $g$. The moduli space $\A_g=\sieg_g/\operatorname{Sp}(2g, \mathbb{Z})$ is a quotient of the Siegel space $\sieg_g$ by the action of $\Sp(2g, \Zeta)$. Since the Siegel space $\sieg_g$ is a Riemannian and in fact Hermitian symmetric space, the moduli space $\A_g$ is endowed with a locally symmetric (orbifold) Riemannian metric. If $\pi: \sieg_g \rightarrow \A_g$ denotes the natural projection map, an algebraic subvariety $Z$ of $\A_g$ is called \emph{totally geodesic} if $Z=\pi(X)$ for some totally geodesic submanifold $X\subset \sieg_g$, which then turns out to be a Hermitian symmetric space as well. Totally geodesic subvarieties of $\A_g$ are closely related to \emph{special} (or \emph{Shimura}) subvarieties of the moduli space, which are defined as Hodge loci for the natural variation of Hodge structures on $\A_g$. In fact, {special} subvarieties of $\A_g$ are characterized as totally geodesic subvarieties that contain (countably many) complex multiplication points. 

In the following, we focus on the case where the uniformizing symmetric space $X$ of a totally geodesic subvariety is isomorphic to the complex unit ball $\mathbb{B}_d$.

\subsection{Main results}
The main result we prove here, \autoref{thm:intro1}, provides an intrinsic numerical characterisation of the situation in which the image of the period map $\varphi$ of a family of principally polarised Abelian varieties is a totally geodesic ball quotient inside $\cA_g$. 

It improves upon results obtained by  M\"oller-Viehweg-Zuo in \cite[Thm. 0.2]{MVZ}, which in turn are based on a number of prior advances that we discuss in detail in \autoref{sec:comments} below. The results in \emph{loc.~cit.} require strong positivity assumptions on a smooth snc compactification $(Y,D)$ of $U$, see \cite[Assumption 1.1]{MVZ}: $U$ is assumed to admit a smooth projective compactification $Y$ with normal crossing boundary divisor $D = Y \setminus U$ such that $\Omega^1(\log D)$ is nef and the log canonical bundle $\mathscr{O}_Y(K_Y + D)$ is ample with respect to $U$.  The main motivation for our work is to remove the smoothness and positivity assumptions on the log-canonical bundle, at the price of computing numerical invariants and working on a well-adapted potentially singular birational model of $(Y,D)$, and to discuss the assumption on the log-cotangent bundle in light of recent semistability results for such sheaves.  More precisely, we will work in the following general

\subsubsection{Setup}\label{subsec:setting} Let $U$ be an irreducible smooth quasi-projective variety. Assume either that $U$ is proper (and hence projective) of arbitrary dimension $d\geq 2$  or a quasi-projective surface. Let $f : A\to U$ be a family of principally polarised Abelian varieties of dimension $g$ and denote by $\varphi= \varphi_f :U \to \cA_g$ the associated period map.\footnote{Which we will without loss of generality assume to map to a \emph{fine} moduli space of polarised Abelian varieties.} Let $Y$ be a smooth projective compactification of $U$ with simple normal crossing divisor $D:=Y\setminus U$. Assume that 

  \begin{enumerate}
  \item $R^1f_*\underline{\bC}$ has unipotent local monodromies at infinity;
  \item $\varphi:U \to \cA_g$ is generically finite;
  \item the family $f: A \to U$ is non-extendable in the sense of  \autoref{def:nonextendabable}.
  \end{enumerate}
  \bigskip 
  The non-extendability condition  is only relevant in the quasi-projective case, i.e., when $D\neq \emptyset$, and it is discussed in detail in \autoref{subsect:nonextendable}.  Roughly speaking, it requires that the family $f$ (resp. the associated period map $\varphi_f$) cannot be extended to any of the irreducible components of the boundary $D$. This can always be achieved by extending a given family over as many components of $D = Y \setminus U$ as possible. Non-extendable families in particular have proper period maps $\varphi: U \to \mathcal{A}_g$. The reasons for the restriction to dimension two in the quasi-projective case are discussed in \autoref{sec:introperiodmapsandMMP} below. 
  
\subsubsection{Period maps and MMP operations}\label{sec:introperiodmapsandMMP}

In order to able to pass to the well-adapted birational model mentioned above, we first  prove that period maps factor through certain operations of the minimal model program. We summarise the results that we obtain in \autoref{sec:factorization} as follows:

\begin{theorem}[Factorization of period maps]\label{intro:fact} In the setup described in Section~\ref{subsec:setting}, there exist a birational rational map $g:~(Y, D) \dasharrow (Y_c, D_c)$ that does not extract a divisor and has the following properties:
\begin{enumerate}
    \item[(i)] $Y_c$ has canonical singularities if  $D=\emptyset$, and $(Y_c, D_c)$ is a log-canonical orbifold pair if $Y$ has dimension two and $D\neq \emptyset$.
    \item[(ii)] $K_c+D_c$ is nef, big, and ample on $U_c:=Y_c\setminus D_c$.\footnote{In the sense of \autoref{def:ampleonU}.}
    \item[(iii)] The period map $\varphi$ factors through a holomorphic map $\varphi_c: U_c\ra \cA_g$ such that the induced VHS $\mathbb{V}_c$ has unipotent local monodromy at infinity.
    \item[(iv)] The induced VHS $\mathbb{V}_c$ is non-extendable and its period map $\varphi_c: U_c\ra \cA_g$ is still proper. 
\end{enumerate}  
In the case where $D=\emptyset$, $Y_c$ is the \emph{canonical model} of $Y$.
\end{theorem}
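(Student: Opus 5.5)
The plan is to produce $(Y_c,D_c)$ by running the (log) minimal model program on $(Y,D)$ and then passing to the (log) canonical model. We then show that every contraction and flip is compatible with the period map, because $\varphi$ is quasi-finite and the relevant extremal curves are $\varphi$-trivial.

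\emph{Step 1: bigness.} First we check that $K_Y+D$ is big. By the Viehweg–Zuo/Brunebarbe-type results on period maps of VHS, generic finiteness of $\varphi$ together with unipotent monodromies makes the Griffiths line bundle $\det(F^1)$, i.e. the Deligne extension of $\det f_*\Omega^1_{A/U}$, big and nef on $Y$. Via the Higgs field $F^1\to F^0\otimes\Omega^1_Y(\log D)$ this yields bigness of $K_Y+D$ (pseudoeffectivity of $\Omega^1_Y(\log D)$ plus generic finiteness gives $(Y,D)$ of log general type). Hence $(Y,D)$, which is snc and so log canonical (dlt), is of log general type.
\begin{itemize}
\item If $D=\emptyset$, BCHM gives the canonical model $Y_c=\operatorname{Proj}R(K_Y)$, which has canonical singularities, with $K_{Y_c}$ ample.
\item In the surface case with $D\neq\emptyset$, the log MMP for lc surfaces (Fujino) yields a log minimal model $(Y',D')$ and then a log canonical model. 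Here we will not contract everything: we keep $D_c$ and $U_c$ and only require ampleness on $U_c$ in the sense of \autoref{def:ampleonU}. The surface restriction is used because log-canonical models of lc surfaces are well understood, with orbifold singularities in the klt part.
\end{itemize}

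\emph{Step 2: factorization through each step.} Suppose an extremal ray is contracted, or the canonical model contracts a curve $C$ with $(K+D)\cdot C\le 0$. We show $\varphi$ is constant on $C\cap U$, or that $C\subset D$.
\begin{itemize}
\item In the compact case, a curve $C$ with $K_Y\cdot C<0$ is covered by rational curves (bend and break). Rational curves map constantly to $\cA_g$, since $\sieg_g$ is a bounded domain and a period map from $\bP^1$ or $\bC$ is constant. Curves with $K\cdot C=0$ contracted by the canonical model lie in the null locus of the big nef $K_{Y'}$; we would argue that $\varphi^*\omega_{\cA_g}$, being dominated by $K$ via the Higgs field estimate (Arakelov-type inequality $\det F^1\le c(K+D)$ in the nef cone), has zero degree on them. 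Since $\varphi$ is generically finite and $\det F^1$ is semiample from the Baily–Borel model, such curves are contracted by $\varphi$.
\item Flips are isomorphisms in codimension one. Along the flipped locus, $\varphi$ extends by Zariski's main theorem and rigidity, because the exceptional loci are covered by $\varphi$-contracted curves.
\item Contracted divisors are $\varphi$-contracted, so $\varphi$ descends holomorphically to the normal variety $U_c$ (normality plus rigidity lemma).
\end{itemize}
This proves (iii) for the map; unipotency is preserved because local monodromies around the surviving boundary components of $D_c$ are those around components of $D$ (strict transforms). Near orbifold points, unipotency is automatic away from the boundary.

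\emph{Step 3: (iv).} Non-extendability: a boundary component of $D_c$ is the strict transform of some component of $D$, and an extension over it would pull back to an extension on $Y$, contradicting hypothesis (3). Properness of $\varphi_c$ then follows as for $\varphi$, from non-extendability and Borel extension into the Baily–Borel compactification. Also, $g$ extracts no divisor by construction of the MMP.

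The main obstacle is Step 2 for the $K$-trivial (or $(K+D)$-trivial) curves contracted in passing to the canonical model, i.e. showing $\varphi$ is constant along them. I would attack this with the inequality $\varphi^*\omega\cdot C\le c\,(K+D)\cdot C$ for all curves $C$ meeting $U$, derived from semipositivity of the kernel of the Higgs field (Zuo). In the log surface case, $D$-components must additionally be handled via non-extendability.
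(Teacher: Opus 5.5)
Your overall plan (run the MMP, pass to a partially ample model, descend $\varphi$) matches the paper's. However, the step you call the main obstacle is not closed, and the route you suggest for it does not work as stated. The bound $\varphi^*\omega\cdot C\le c\,(K+D)\cdot C$ for curves meeting $U$ is not an available result. Zuo's input is semi-negativity of kernels of Higgs fields. The Arakelov inequality on the normalisation $\tilde C$ bounds the Hodge degree by $\deg\Omega^1_{\tilde C}(\log S)$, with $S$ the preimage of $D$, not by $(K+D)\cdot C$.

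Also, knowing that exceptional loci are covered by $\varphi$-contracted rational curves does not make $\varphi$ constant on a fibre. Something like $\mathbb{P}^1\times B$ is covered by rational curves yet can map non-constantly to $\cA_g$. You would need rational chain connectedness of the fibres (Hacon--McKernan). The paper's idea is topological. By Takayama's theorems (\autoref{thm:takayama}, and the version for extremal contractions used in \autoref{prop:factorminimal}), these contractions, including the crepant morphism $Y_{\min}\to Y_{\can}$, do not change fundamental groups. So every local system is pulled back from the target and is trivial near each fibre. The period map then lifts near the fibre to the bounded domain $\sieg_g$ and is constant on the compact connected fibre, with no inequality needed. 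In the surface case the paper uses the concrete version: the curves contracted into $U_c$ are exceptional fibres over canonical, hence rational, singularities, i.e.\ trees of $\mathbb{P}^1$'s.

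For $D\neq\emptyset$ you also never say which curves are contracted to reach $Y_c$, and this is where non-extendability does real work. The paper contracts exactly the exceptional fibres of $Y_{\min}\to Y_{\can}$ that meet $U_{\min}$, with existence via Schr\"oer and projectivity via rationality of the singularities. It then proves in \autoref{prop:propernesstominimal} and \autoref{prop:propernesstomodel} that such fibres are disjoint from $D$:
\begin{itemize}
\item A subtree of $D$ in the fibre would contract to a klt point with finite local fundamental group. Griffiths' theorem then extends $\varphi$ across it, contradicting infinite monodromy.
\item Otherwise the fibre meets $D$ in at most two points, so $\varphi$ is constant on the fibre minus $D$. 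A limit argument then puts a boundary point into $\cA_g$, again a contradiction.
\end{itemize}
This is what makes the interior singular points of $Y_c$ canonical. At points of $D_c$, the orbifold-pair property in (i) additionally needs \autoref{prop:lcpairs_as_quotients}, which ``orbifold singularities in the klt part'' does not provide.

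Your Step 2 also dismisses contracted curves $C\subset D$. But a connected component of $D$ contracted to a point of $U_c$ would require extending $\varphi$ across that point, or ruling this out as above. Your direct argument for (iv) is sound in principle: Borel extension on $(Y_c,D_c)$ plus infinite monodromy around every component of $D_c$. It would even recover the saturation. But it only works once $\varphi_c$ is known to be holomorphic on all of $U_c$ and the extension is justified at the orbifold points of $D_c$, i.e.\ after the gaps above are filled.
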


The proof of part (iv) is closely related to the completeness question for the natural K\"ahler-Einstein metric on $Y \setminus D$, cf.~\cite{cadorel}. The results in \autoref{sec:factorization} are independent of \cite{MVZ} and their proofs use mainly detailed knowledge about the singularities of the minimal model program (both concerning their general structure in arbitrary dimension and their classification in the surface case) and fundamental facts about period maps. The main reasons we have to restrict to the surface case in the non-compact context (i.e., when $D \neq \emptyset$) is the lack of termination results for log-MMPs in dimensions $d\geq 4$ and our lack of detailed understanding of fibers of log-MMP operations in dimensions $d\geq 3$. 
\subsubsection{Characterisation of totally geodesic ball quotients}

Once we see that the families we are investigating really come from the partially ample model described above, the main goal is to characterize when the image of the period map $\varphi_c$ is a totally geodesic ball quotient. The following is our main result. 

\begin{theorem}[Characterization of totally geodesic ball quotients]
	\label{thm:intro1}
Consider the setup described in Section~\ref{subsec:setting} and let $f_c:A_c\to U_c$ be the family of Abelian varieties induced on the model $(Y_c,D_c)$ by \autoref{intro:fact}. Then, the image $\varphi(U)$ of the original period map is a totally geodesic ball quotient  if and only if the reflexive sheaf $\Sym^{[m]}\bigl(\Omega^1_{Y_C}(\log D_c)\bigr)$  is slope-stable with respect to $K_{Y_c} + D_c$ for all $m$, and if additionally there is a decomposition of the VHS $R^1{(f_c)}_* \underline{\bC}$  as
	\begin{equation}\label{eq:main}
	    R^1{(f_c)}_* \underline{\bC}=\bW\oplus \bU
	\end{equation}
	such that $\bU$ is a unitary local system and all $\mathbb{C}$-irreducible sub-VHS $\bV$ of $\bW$ fulfill the following conditions: 
	\begin{enumerate}
	    \item[(i)] 
	    The log-Higgs sheaf $(\mathscr{E}=\mathscr{E}^{1,0}\oplus \mathscr{E}^{0,1}, \theta)$  associated with $\bV$ satisfies the Arakelov equality
	    \[\mu(\mathscr{E}^{1,0}) -\mu(\mathscr{E}^{0,1}) = \mu\left(\Omega^1_{Y_{c}}(\log D_c)\right).\] 
	    \item[(ii)]  The length of the Higgs field $\zeta(\mathscr{E}, \theta)$ as defined in \autoref{def:lenght2} satisfies \[\zeta(\mathscr{E}, \theta)=\frac{\rk(\mathscr{E}^{1,0})\rk(\mathscr{E}^{0,1})(d+1)}{\rk(\mathscr{E})d}.\]
	    \end{enumerate}
\end{theorem}

We refer the reader to Section~\ref{subsect:MVZ-assumption} for a thorough discussion on the relation of the stability assumption on $\Sym^{[m]}\bigl(\Omega^1_{Y_C}(\log D_c)\bigr)$ made in our result to the condition imposed by M\"oller-Viehweg-Zuo. The necessary part of the statement is a special case of \cite[Prop.~0.1]{MVZ}.

\begin{rem}\label{rem:BMY_Yau} The non-projective case of \cite[Thm.~0.2]{MVZ} rests on results of Yau \cite{Yau93} that seem to not be correctly justified in \emph{loc.~cit.}, see the discussion in \cite[Sec.~1.2,(2)]{cadorel} and in \autoref{subsect:MVZ-assumption} below. In particular, it relies on a characterisation of quasi-projective ball quotients using the log-Bogomolov-Miyaoka-Yau equality that has been established only under restrictive assumptions not covering the setup considered in \cite{MVZ}. While in the surface case, the required uniformisation result is known thanks to work of Kobayashi-Nakamura-Sakai, see \autoref{thm:logMY} below, we nevertheless give a proof that circumvents the use of uniformisation via the BGM-equality and which therefore has a good chance of generalising to higher dimensions.  While heavily relying on some intermediate results in \cite{MVZ}, it provides a new argument also in the case of an snc surface pair,  avoiding certain additional problems discussed in \autoref{subsect:problemsinMVZ} below. 
\end{rem}

Examples of ball quotients realising the non-compact surface case and also others realising the higher-dimensional compact case are known to exist, e.g.~among unitary Shimura varieties of PEL type.

\subsection{Plan of the proof}

In order to prove the main result, after applying the factorization theorem, Theorem~\ref{intro:fact}, we need to analyze the Higgs bundles arising from the VHS also in the singular case.
The Arakelov inequality shown in \cite[Theorem 1]{VZ07}\footnote{See also {\cite[Thm. 6.4]{viehwegsurvey}} for a simplified proof.} can in fact (using new input on semistability of Higgs bundles on singular spaces) be shown to hold more generally on spaces with singularities as they occur in various steps of the minimal model program. In fact, in \autoref{sect:inequality} we establish the following very general result. 

\begin{theorem}[Arakelov inequality]
	\label{intro:inequality}
	Let $(Y,D)$ be a projective log-canonical pair of dimension $d$ such that  $K_Y+D$ is big and nef. 	Assume that $\bV$ is a non-unitary and irreducible $\mathbb{C}$-VHS of weight one on $Y\setminus D$ with unipotent local monodromies at infinity. Then, we have
	\begin{equation}\label{eq:Arakelov_intro}
	    \mu(\bV):=\mu(\mathscr{E}^{1,0}) -\mu(\mathscr{E}^{0,1})\leq \mu \bigl(\Omega_Y^{[1]}(\log D)\bigr) = (K_Y+D)^d,
	\end{equation}
	where $\mathscr{E}=(\mathscr{E}^{1,0}\oplus \mathscr{E}^{0,1},\theta)$ is the reflexive extension  of the system of log-Hodge bundles associated to $\bV$ from the snc locus of $(Y,D)$ to $Y$.
	
	Moreover, the equality $\mu(\bV)=\mu \left(\Omega_Y^1(\log D )\right)$ implies that both $\mathscr{E}^{1,0}$ and $\mathscr{E}^{0,1}$ are semistable with respect to $K_Y + D$ (as reflexive sheaves).
\end{theorem}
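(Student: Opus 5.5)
We follow the strategy of Viehweg--Zuo \cite[Theorem 1]{VZ07} (in the simplified form of \cite[Thm.~6.4]{viehwegsurvey}) and transport it to the singular log-canonical setting. Slopes are computed with respect to the big and nef class $H:=K_Y+D$, that is, $\mu(\mathscr{F})=c_1(\mathscr{F})\cdot H^{d-1}/\rk\mathscr{F}$ for reflexive sheaves. Since $Y$ is only $\mathbb{Q}$-factorial up to finite covers, we would pass to a quasi-étale or orbifold model where needed, or define $c_1$ as a Weil divisor class intersected with $H^{d-1}$. This is harmless because all sheaves are reflexive and singular loci have codimension $\ge 2$. The identity $\mu(\Omega^{[1]}_Y(\log D))=(K_Y+D)^d$ is then just the definition of $c_1$ together with $\det\Omega^{[1]}_Y(\log D)=\mathscr{O}_Y(K_Y+D)$ in codimension one, up to the normalisation by the rank $d$ used in the statement.

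\textbf{Step 1 (semistability of the log-Higgs sheaf).} The reflexive extension $(\mathscr{E},\theta)$ of the Deligne-extended log-Hodge bundles of $\bV$ from the snc locus is a log-Higgs sheaf with
\[
\theta:\mathscr{E}^{1,0}\to\mathscr{E}^{0,1}\otimes\Omega^{[1]}_Y(\log D).
\]
Because $\bV$ is irreducible, Simpson/Mochizuki correspondence (unipotent monodromy, Deligne extension) gives $\deg\mathscr{E}=0$, and $(\mathscr{E},\theta)$ is $H$-semistable as a Higgs sheaf. On a resolution this is polystability with respect to nef and big polarisations, obtained by approximating $H$ by ample classes $H+\epsilon A$ and taking limits. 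We then descend to $Y$ using that $\mu$ only sees codimension-one data.

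\textbf{Step 2 (the key inequality).} Consider the iterated Higgs map
\[
\theta^{(1)}:\mathscr{E}^{1,0}\to\mathscr{E}^{0,1}\otimes\Omega^{[1]}_Y(\log D),
\]
which is nonzero since $\bV$ is non-unitary. Let $\mathscr{K}=\ker\theta\cap\mathscr{E}^{1,0}$. Then $\mathscr{K}$ is a Higgs subsheaf, so $\mu(\mathscr{K})\le 0$, and $\mathscr{E}^{0,1}$ is a Higgs subsheaf, so $\mu(\mathscr{E}^{0,1})\le 0$. The image $\mathscr{I}$ of $\theta$, regarded inside $\mathscr{E}^{0,1}\otimes\Omega^{[1]}_Y(\log D)$, satisfies
\[
\mu_{\min}(\mathscr{E}^{1,0}/\mathscr{K})\le \mu(\mathscr{I})\le \mu_{\max}\bigl(\mathscr{E}^{0,1}\otimes\Omega^{[1]}_Y(\log D)\bigr).
\]
Here we need the crucial input that $\Omega^{[1]}_Y(\log D)$ is $H$-semistable. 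For lc pairs with $K_Y+D$ big and nef this is the Guenancia / Greb--Kebekus--Peternell semistability theorem, and we would invoke that form of it. Given it, $\mu_{\max}(\mathscr{F}\otimes\Omega)=\mu_{\max}(\mathscr{F})+\mu(\Omega)$, since tensor products of semistable reflexive sheaves are semistable on the klt/orbifold locus. Combining this with the Higgs-semistability bounds on the Harder--Narasimhan pieces yields, after the standard bookkeeping of \cite{viehwegsurvey} (the parts $\mathscr{E}^{1,0}\supset\mathscr{K}$ and $\mathscr{E}^{0,1}\supset$ saturation of the image, with $\rk\mathscr{E}^{1,0}=\rk\mathscr{E}^{0,1}$ by weight one), the bound $\mu(\mathscr{E}^{1,0})-\mu(\mathscr{E}^{0,1})\le\mu(\Omega^{[1]}_Y(\log D))$.

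\textbf{Step 3 (the equality case).} Every inequality in the chain must be an equality. Then the first HN piece of $\mathscr{E}^{1,0}$ must have slope $\mu(\mathscr{E}^{1,0})$ and likewise for $\mathscr{E}^{0,1}$, so both are semistable. This is read off exactly as in \cite{VZ07}.

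\textbf{Main obstacle.} We expect the hardest step to be Step 2's need for semistability of $\Omega^{[1]}_Y(\log D)$ and of tensor products with respect to a merely nef and big class on an lc (not klt) pair. There the orbifold structure may fail, so Kähler--Einstein or Bogomolov-type arguments are not available off the shelf. We would first try to reduce to the klt locus, which has complement of codimension $\ge2$ in the relevant cases, or to the log-smooth locus of a dlt modification. On that locus we would use the singular Kähler--Einstein metric to get semistability, and since slopes are insensitive to codimension-two modifications, this should suffice.
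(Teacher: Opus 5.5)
Your overall plan matches the paper's. The paper's proof also just observes that the argument of \cite[Thm.~6.4]{viehwegsurvey} goes through once four inputs are available in the singular setting: Harder--Narasimhan filtrations for the big and nef class $K_Y+D$, semistability of reflexive tensor products, semistability of $\Omega^{[1]}_Y(\log D)$, and semistability of the log-Higgs sheaf $(\mathscr{E},\theta)$. Two points in your version need correcting, however.

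\textbf{The rank claim is false.} Your parenthetical claim that $\rk\mathscr{E}^{1,0}=\rk\mathscr{E}^{0,1}$ ``by weight one'' does not hold in this generality. Hodge symmetry holds for real (e.g.\ integral) weight-one VHS. An irreducible $\mathbb{C}$-sub-VHS $\bV$ of $R^1f_*\underline{\bC}$ that is not isomorphic to $\overline{\bV}$ typically has $h=\rk\mathscr{E}^{1,0}\neq h'=\rk\mathscr{E}^{0,1}$; conjugation only exchanges the Hodge numbers of $\bV$ and $\overline{\bV}$. These are exactly the factors that matter in this paper: see the length formula $hh'(d+1)/((h+h')d)$, the ordering $\rk E_i^{1,0}\le \rk E_i^{0,1}$ in the proof of \autoref{prop:orthsplit}, and the factors of type $(1,d)$ that uniformise $\mathbb{B}_d$. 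Bookkeeping that uses $h=h'$ only proves the inequality for self-conjugate factors. The Viehweg--Zuo argument \cite{VZ07} works for arbitrary $(h,h')$, so you should drop the assumption and run that argument as stated. As written, your Step~2 does not cover the statement in its stated generality.

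\textbf{Semistability of the log tangent sheaf is under-justified.} You cannot simply quote semistability of $\mathscr{T}_Y(-\log D)$ with respect to a merely big and nef $K_Y+D$, because Guenancia's theorem assumes $K_Y+D$ ample. Your suggested route, via singular K\"ahler--Einstein metrics on a klt or dlt locus, is heavy. As sketched, it gives no control over degrees of subsheaves near the removed locus. The paper's reduction (\autoref{prop:guenanciass}) is elementary:
\begin{enumerate}
\item By the lc basepoint-free theorem, $K_Y+D$ is semiample. It therefore defines the log canonical model $q\colon (Y,D)\to (Y_{\can},D_{\can})$ with $K_Y+D=q^*(K_{Y_{\can}}+D_{\can})$.
\item Compute $(K_Y+D)$-degrees on a general complete intersection curve that lies in the locus where $q$ is an isomorphism.
\item A subsheaf of $\mathscr{T}_Y(-\log D)$ then induces, by reflexivity across the codimension-two set $q(\mathrm{Exc}(q))$, a subsheaf of $\mathscr{T}_{Y_{\can}}(-\log D_{\can})$ of the same degree, so Guenancia's ample case applies.
\end{enumerate}

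\textbf{Step~1 takes a different but workable route.} You pass to a resolution and approximate $\pi^*(K_Y+D)$ by ample classes. The paper instead restricts to a general complete intersection curve for the semiample class $K_Y+D$. It chooses this curve inside the snc locus, where $\mathscr{E}$ and the given Higgs subsheaf are locally free, and then invokes Simpson's curve case. Your version must cope with the pulled-back VHS having only quasi-unipotent monodromy, and hence parabolic weights, along exceptional divisors over non-snc points of $D$. This is harmless, since such divisors have degree zero against $(\pi^*(K_Y+D))^{d-1}$, but the curve argument avoids the issue entirely.
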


The proof of the equality case falls into two cases, the one of compact bases and the one of noncompact bases. 

In the compact case, we are able to reduce ourselves to the compact version of the main result of \cite{MVZ}
by establishing that under the conditions imposed in \autoref{thm:intro1} and the assumption that the period map factors through a fine moduli space, the canonical model of the original base of the family is actually smooth.  The problems mentioned in \autoref{rem:BMY_Yau} above only occur if the base is noncompact,  so that based on an observation about the necessity of the positivity assumption on the cotangent bundle imposed in \cite{MVZ}, see \autoref{prop:MVZnonef}, we are able to conclude the proof. 

For the non-compact case of \autoref{thm:intro1} we give a  self-contained proof in \autoref{sec:proof}. While the intermediate results are heavily inspired by some of the arguments of M\"oller-Viehweg-Zuo, the overall strategy is different. As before, we can show that $Y_c \setminus D_c = U_c$ is actually smooth. Then, however, we first use the length equality to show that the lift of the period map to the universal cover of $U_c$ has a totally geodesic submanifold inside Siegel space as its image. An argument going back to an idea of Mok  then shows that the period map is actually an embedding and a locally symmetric subspace.  In dimension two, this leaves us with two options: either this image is a ball or a bidisc. We exclude the second case by stability considerations; these involve comparing the Mumford compactification of the period map image of $U$ with the model $(Y_c, D_c)$ we built in the first step of our proof and rely on the existence of Zariski decompositions on surfaces. A comparison with the approach taken in \cite{MVZ} is given in \autoref{subsect:comparison_our_approach_with_MVZ}.

\subsection{History and earlier results}\label{sec:comments}

As regards \autoref{intro:inequality}, we note that the origin of Arakelov-type inequalities dates back to the work of Arakelov \cite{Arakelov}, who considered families of stable curves $f: \mathcal{C}\rightarrow Y$ over a smooth curve $Y$, smooth over $U=Y\setminus D$ for a finite subset $D\subset Y$. In this setting, Arakelov proved an inequality that can be expressed as
$$
\mu\left(f_*\omega_{\mathcal{C}/Y}\right)\leq \frac{1}{2}\, \mu\left(\Omega^1_Y(\log D)\right),
$$
linking the slope of the Hodge bundle of the family with the slope of the logarithmic cotangent bundle.

This inequality has been object of many generalizations. Here, we focus on families of Abelian varieties and, in particular, on the geometry of those families reaching the Arakelov equality. A first important generalization in this direction is due to Faltings \cite{Faltings}, who established an analogue of Arakelov’s inequality for families of Abelian varieties over curves.

Viehweg and Zuo \cite{VZ04} later characterized those families of Abelian varieties over curves whose image in $\A_g$ is a totally geodesic curve as precisely the ones attaining equality in the Arakelov inequality.
The inequality for family of Abelian varieties over higher-dimensional bases is established in \cite{VZ07}, while the characterization of the equality case is first partially addressed there and then fully treated in \cite{MVZ}.
\autoref{intro:inequality} generalizes the Arakelov inequality in \cite[Thm. 1]{VZ07} to the setting of log-canonical pairs of general type with nef log-canonical bundle. 

The reader is referred to Viehweg's survey article \cite{viehwegsurvey} for a discussion of the background,  a number of interesting aspects of the quantities used to characterise totally geodesic subvarieties in \cite{MVZ} and here, and the relation to other approaches such as Milnor-Wood-type  inequalities pursued for example in \cite{MWPaper}.

\subsection{Related results and further directions}

As recalled above, special subvarieties are totally geodesic, and among these they are characterized as those containing CM points. More precisely, totally geodesic subvarieties of $\A_g$ are linear translates of special subvarieties: Moonen \cite{Moonen} showed that any totally geodesic subvariety $Z$ that is not special is a fiber $Z \cong S_1 \times \{b\}$ of a product $S_1 \times S_2$ of special subvarieties, with $b \in S_2$ not a CM point (a fiber is special if and only if it lies over a CM point of $S_2$).  Applied to period map images of families $f : A \to U$ of principally polarized Abelian varieties, this result implies that if the family $f : A \to U$ is \emph{rigid}, i.e., if there are no non-trivial extensions of $f$ to a smooth family $\tilde f: \tilde A \to T \times U$ with $\dim T>0$, then being totally geodesic is equivalent to being Shimura. Consequently, under the rigidity assumption, our result gives an intrinsic characterisation of the situation in which the moduli image of $f$ is a special ball quotient.

The compact case of our main results (i.e., when $D$ is empty) is interesting from the point of view of recent work of Grushevsky, Mondello, Salvati Manni, and Tsimerman \cite{compact}, which gives sharp bounds on the dimension of compact subvarieties of $\mathcal{A}_g$. On the one hand, with growing $g$, compact subvarieties of higher and higher dimension exist in $\mathcal{A}_g$ so that finding out which of these are totally geodesic is a natural question, on the other hand, estimating the dimension of compact Shimura subvarieties in $\mathcal{A}_g$ is one of the major parts of the proof carried out in \cite{compact}.

Finally, let us point out a relation of Arakelov inequalities to the Coleman-Oort conjecture. Denoting by $\mathcal{M}_g$ the moduli space of smooth projective curves of genus $g$, and by $j:\mathcal{M}_g \to \mathcal{A}_g$ the corresponding period map, the conjecture predicts that for $g\geq 8$ there are no special subvarieties $Z \subset \mathcal{A}_g$ lying generically in the image of $j$, i.e., such that $Z \subset \overline{j(\mathcal{M}_g)}$ and $Z \cap j(\mathcal{M}_g) \neq \emptyset$. While we refer to \cite{Ghigi, MoonenOort} for a thorough survey on the conjecture, we just note that one possible approach to  attack the conjecture is to prove a strict Arakelov inequality for subvarieties of $\A_g$ that are generically contained in the Torelli locus. This strategy has already yielded several non-existence results for various types of Shimura curves in the Torelli locus e.g.~for Shimura curves with large, respectively small, unitary rank \cite{Chen-Lu-Zuo_2016}; see however the explicit examples constructed in \cite{andreas}.

Moreover, Moonen  \cite{moonen-threecases} showed that order to establish the Coleman–Oort conjecture it suffices to consider minimal special subvarieties 
and that, for $g\geq 8$, a minimal special subvariety $S$ of $\A_g$ which is generically contained in the Torelli locus must satisfy one of the following: either $\dim S = 1$, or $\dim S = 2$ and $S$ is compact, or $S$ is a compact ball quotient. So again, the case of compact ball quotients plays a decisive role. Furthermore, characterising compact locally symmetric surfaces not uniformised by the ball is an interesting open question.

\subsection{Acknowledgements}
The authors want to thank Beno\^{i}t Cadorel for enlightening discussions concerning uniformisation of quasi-projective varieties and his paper \cite{cadorel}. They also thank Alessandro Ghigi and Christian Miebach for interesting and helpful discussions on totally geodesic embeddings in Siegel space. Moreover, they thank Ben Moonen for his insightful remarks regarding special subvarieties of $\mathcal{A}_g$, Martin M\"oller for patiently answering our questions concerning \cite{MVZ}, Niklas M\"uller for discussions on the notion of log-Higgs sheaf, and Andreas Pieper for several interesting comments regarding families of Jacobians reaching the Arakelov bound. 

\section{Preliminary material}
\label{sec:background}

We work over the field $\mathbb{C}$ of complex numbers. 

\subsection{Preliminaries on VHSs and Higgs bundles on smooth varieties with snc compactification}\label{subsect:prelim_VHS} 
In this section, we recall some basic notions on variations of Hodge structures on smooth varieties, in particular the construction of the Higgs bundle associated with a VHS and the definition of its complexity, which is one of the numerical invariants appearing in our main statement. 

We start by recalling the fundamental notions surrounding families of Abelian varieties over quasi-projective base manifolds, as used in \cite{MVZ}. Let $U$ be an irreducible non-singular quasi-projective variety and $f : A\to U$ a family of principally polarized Abelian varieties of dimension $g$.  Equivalently, this corresponds to a polarized $\mathbb{Z}$-variation of Hodge structures of weight one $(\mathbb{W}_0, \mathscr{H}, \nabla, E^{1,0}, Q)$ over $U$. Here $\mathbb{W}_0=R^1f_*\underline{\mathbb{Z}}$ is a local system, $\mathscr{H}:= \mathbb{W}_0\otimes_{\mathbb{Z}} \mathscr{O}_U$ is the associated holomorphic vector bundle, $\nabla: \mathscr{H} \ra \mathscr{H}\otimes \Omega^1_U$ is the (flat) Gauss-Manin connection, $Q: \mathbb{W}_0 \otimes \mathbb{W}_0 \ra \underline{\mathbb{Z}}$ is the polarization, and $E^{1,0}$ is a holomorphic subbundle of $\mathscr{H}$ on which the Hodge metric $h(x,y)=i Q_{\mathbb{C}}(x, \overline{y})$ is positive definite. As a $\mathcal{C}^{\infty}$-vector bundle, $\mathscr{H}$ decomposes as $\mathscr{H}\cong E^{1,0}\oplus \overline{E^{1,0}} $. Set $E^{0,1}:= \mathscr{H}/E^{1,0}$, $E:= E^{1,0} \oplus E^{0,1}$,  and consider the following exact sequence of holomorphic bundles
\begin{gather*}
    0 \ra E^{1,0} \overset{i}{\longrightarrow} \mathscr{H} \overset{\pi}{\longrightarrow} E^{0,1} \ra 0.
\end{gather*}
The \emph{Higgs field associated with the VHS} is (by slight abuse of notation) the $\mathscr{O}_U$-linear map $\theta: E \to E \otimes \Omega^1_U$ induced by
\begin{gather*}
    \theta:= \pi \circ \nabla \circ i : E^{1,0} \longrightarrow E^{0,1} \otimes \Omega^1_U.
\end{gather*}
We call $(E, \theta)$  the \emph{Higgs bundle associated with the VHS}.  

Now let $Y$ be a smooth projective compactification of $U$ with smooth normal crossing divisor $D:=Y\setminus U$, and assume that $R^1f_*\underline{\bC}$ has unipotent local monodromies at infinity. Then $E^{1,0}$ extends to a subbundle of the Deligne extension of $\mathscr{H}$ to $Y$ and the Gauss-Manin connection extends to a logarithmic connection $   \nabla: \mathscr{H} \ra \mathscr{H} \otimes \Omega^1_Y(\log D)$ which induces a logarithmic Higgs field
\begin{gather*}
    \theta: E^{1,0} \longrightarrow E^{0,1} \otimes \Omega^1_Y(\log D)
\end{gather*}
that extends $\theta$ from $U$ to $Y$. Let $P^0:=\ker(\nabla\circ i)\subset E^{1,0}$ be the maximal unitary flat subbundle associated with the VHS. Then, $\mathbb{U}=P^0\oplus \overline{P^0}\subset \mathscr{H}$ defines a sub-$\mathbb{C}$-VHS, and hence Deligne's semisimplicity implies that the PVHS $R^1f_*\underline{\bC}$ decomposes as 
\begin{gather*}
   R^1f_*\underline{\bC}= \mathbb{U} \oplus \mathbb{W}.
\end{gather*}

 In the following, we consider irreducible $\mathbb{C}$-sub-local systems
$\mathbb{V} \subset \mathbb{W}$ and study numerical invariants attached to
them. Let $\mathbb{V} \subset \mathbb{W}$ be such an irreducible
$\mathbb{C}$-sub-local system, and denote by
$(E = E^{1,0} \oplus E^{0,1}, \theta)$ the Higgs bundle associated with
$\mathbb{V}$. We recall from \cite[Section~1.2]{MVZ} the
definition of the length $\zeta(\mathbb{V}) = \zeta(E,\theta)$, which
measures the complexity of the associated Higgs field.
For an integer $l > 0$, the Higgs bundle $(E, \theta)$ induces a Higgs
bundle
\begin{gather*}
 \bigwedge^{l}(E,\theta)=\left(\bigoplus_{m=0}^{l}E^{l-m,m},\bigoplus_{m=0}^{l-1} \theta_{l-m,}\right),   
\end{gather*}
where
\begin{gather*}
    E^{l-m,m}:=\bigwedge^{l-m} E^{1,0}\otimes\bigwedge^{m} E^{0,1},
\end{gather*}
and where the morphisms
\begin{gather*}
  \theta_{l-m,m}:E^{l-m,m}\longrightarrow E^{l-m-1,m+1}\otimes \Omega^1_Y(\log D)  
\end{gather*}
are induced by $\theta$. In the case where $l = \rk(E^{1,0})$ is maximal, we have that $E^{l,0} = \det(E^{1,0})$, and we write
\begin{gather*}
 \theta^{(m)}:=\theta_{l-m+1,m-1} \circ \cdots \circ \theta_{l,0} : \; \det(E^{1,0}) \longrightarrow E^{l-m,m} \otimes \Sym^m\bigl(\Omega^1(\log D)\bigr)
\end{gather*}
for the \emph{iterated Higgs field} or \emph{Griffiths-Yukawa coupling}. 
\begin{definition}[Length of a Higgs bundle, \protect{see \cite[after Eq.~0.3]{MVZ}}]\label{def:lenght}
The \emph{length of the Higgs bundle} $(E,\theta)$  is defined to be
\begin{equation}
\zeta(E,\theta):=\max\bigl\{m\in \bN  \mid  \theta^{(m)} \not\equiv 0 \bigr\}.
\end{equation}
Sometimes this is abbreviated to $\zeta(\mathbb{V})$, pointing to the fact that the Higgs bundle $(E, \theta)$ is induced by the VHS.
\end{definition}

Related to the previous discussion, let $\langle \det(E^{1,0}) \rangle \subset \bigwedge^{l}(E,\theta)$
denote the Higgs subsheaf generated by $\det(E^{1,0})$ inside the Higgs bundle $\bigwedge^{l}(E,\theta)$. The length is then equal to 
\begin{equation}
    \zeta(E,\theta) = \max\bigl\{m\in \bN  \mid   \langle \det(E^{1,0}) \rangle^{l-m, m} \neq \{0\}\bigr\}.
\end{equation}

\subsection{Discussion of technical assumptions} 

In this section, we will discuss the technical assumptions made in \cite{MVZ} and in which way these are dropped in the current paper. 

\subsubsection{The MVZ-assumptions}\label{subsect:MVZ-assumption}

The main result of M\"oller-Viehweg-Zuo \cite[Thm.~0.2]{MVZ} that we wish to generalize is formulated under strong positivity assumptions on the smooth snc compactification $(Y,D)$ of $U = Y \setminus D$ . Before we formulate these assumptions, let us recall the notion of ampleness with respect to an open subset.  

\begin{definition}\label{def:ampleonU}
A $\mathbb{Q}$-Cartier divisor $D$ on a normal projective variety $Y$ is called \emph{ample with respect to the Zariski-open subset $U\subset Y$} if for some sufficiently divisible $m\in \mathbb{N}_{\geq 1}$ that makes $mD$ Cartier, the stalks of $\mathscr{O}_Y(mD)$ at points of $U$ are generated by global sections, and the induced morphism $U \to \mathbb{P}\bigl(H^0\bigl(Y, \mathscr{O}_Y(mD)\bigr)^*\bigr)$ is a locally closed embedding. 
\end{definition}

The following is the set of assumptions that is used for most of \cite{MVZ}; after discussing these in the current section, we will discuss the setup of our main results in \autoref{subsubsect:our_assumptions}.

\begin{assumption}[{\cite[Assumptions 1.1]{MVZ}}]\label{assump:MVZ}
The logarithmic cotangent bundle $\Omega^1_Y(\log D)$ is nef, and the log canonical bundle $\omega_Y(D) = \mathscr{O}_Y(K_Y + D)$ is ample with respect to $U$ (in addition to being nef as a determinant of the nef bundle $\Omega^1_Y(\log D)$).
\end{assumption}

In order to explain the logic behind some of the arguments of \cite{MVZ} and in view of the fact that the result of Yau that forms one of the foundations of the arguments there seems to not be correctly justified in \cite{Yau93}, see \cite[Sec.~1.2,(2)]{cadorel},  in the current subsection we will additionally \emph{assume the validity of the statements made in \cite{Yau93}}. This  is equivalent to a completeness assumption for the natural K\"ahler-Einstein metric on $Y\setminus D$, which exists due to the ampleness assumption on $K_Y +D$ and which under the additional assumption can be used to derive de Rham-type decomposition results for the universal cover of $Y\setminus D$ that are used heavily in \cite{MVZ}, see Section 1.1 in \emph{loc.cit.}.

We first comment on the positivity assumptions on $\omega_Y(D)$. To begin with, nefness allows to define slopes $\mu(F)$ for coherent sheaves on $Y$ and hence the notions of $\mu$-(semi/poly)stability with respect to $\omega_Y(D)$.
Moreover, ampleness on $U$ and the existence of the complete K\"ahler-Einstein metric in the class $c_1(K_Y +D)$  allows to show that the logarithmic cotangent bundle $\Omega^1_Y(\log D)$ is slope-polystable with respect to $\omega_Y(D)$, even without assuming $\Omega^1_Y(\log D)$ to be nef. Consequently, we obtain a direct sum decomposition
\begin{gather}\label{eq:dec}
\Omega^1_Y(\log D)=\Omega_1\oplus \cdots \oplus \Omega_r,
\end{gather}
where each $\Omega_i$ is slope-stable of rank $n_i=\rk(\Omega_i)$. There are three cases for $\Omega_i$:
\begin{itemize}
\item \textbf{Type A:} $\Omega_i$ is invertible (i.e., $\rk(\Omega_i)=1$).  
\item \textbf{Type B:} $\rk(\Omega_i) > 1$, and for all $m>0$ the symmetric power $\mathrm{Sym}^m(\Omega_i)$ is slope-stable with respect to $\omega_Y(D)$. 
\item \textbf{Type C:} All remaining cases; that is, there exists some $m>1$ such that $\mathrm{Sym}^m(\Omega_i)$ is not slope-stable with respect to $\omega_Y(D)$.
\end{itemize}
Under the completeness assumption on the induced K\"ahler-Einstein metric, the decomposition \eqref{eq:dec} corresponds to a decomposition of the universal covering $\pi:~\tilde U \ra U$ as a product $\tilde U= M_1\times \ldots M_r$, where $\dim M_i=n_i$, cf.~\cite[Sect.~1.1]{MVZ}.

Second, we discuss the nefness assumption of $\Omega^1_Y(\log D)$. In fact, this implies that all the $\Omega_i$ and hence all the $\det(\Omega_i)$ are nef as well. Under the same completeness assumption for the natural K\"ahler-Einstein metric on $Y \setminus D$ one obtains the following geometric information for the factors $M_i$. 

\begin{itemize}
\item If $\Omega_i$ is of type A, then $M_i$ is a one-dimensional complex ball.  
\item If $\Omega_i$ is of type C, then $M_i$ is a bounded symmetric domain of rank greater than $1$.  
\item If $\Omega_i$ is of type B, then $M_i$ is an $n_i$-dimensional complex ball if and only if 
\begin{equation}\label{eq:BG}
[2 \cdot (n_i+1) \cdot c_2(\Omega_i) - n_i \cdot c_1(\Omega_i)^2] \cdot  c_1(\omega_Y(S))^{\dim Y - 2} = 0.
\end{equation}
\end{itemize}

Finally, let us point out that the assumption that $\omega_Y(D)$ is ample with respect to $U$ is also crucial for applying certain aspects of the quasi-projective Simpson correspondence. This is going to play a role also in our more general singular setting; cf.~\autoref{thm:simpsonVHS_singular}.

\subsubsection{Assumptions made in the current paper and an observation}\label{subsubsect:our_assumptions}

In the current paper,  will remove most of the MVZ positivity assumptions listed in \autoref{assump:MVZ} at the price of allowing some singularities that are inevitably introduced when passing to a well-adapted birational model $(Y_c, D_c)$. Here, the natural logarithmic tensor bundles appearing in the discussions above have to be replaced with their natural generalisation to singular spaces, which turn out to be reflexive sheaves of symmetric tensors that are customarily written as $\Sym^{[m]}\bigl( \Omega^1_{Y_c} \bigr)$. 

In order to single out Type B, which owing to the appearance of the Miyaoka-Yau-type condition \eqref{eq:BG} we consider the most interesting one, we  will assume that 
\begin{equation}\label{eq:stab_assump} \Sym^{[m]}(\Omega^1_{Y_c}(\log D_c)) \, \text{is stable for all $m \in \mathbb{N}_{\geq 1}$.}\end{equation}
In particular, this implies that $\Omega^{[1]}_{Y_c}(\log D_c)$ itself is stable so that only a single factor appears in the appropriate generalisation of \eqref{eq:dec}.

Once the logarithmic cotangent bundle is stable, that is, when only a single factor appears in \eqref{eq:dec}, the nefness assumption on $\Omega^1_Y(\log D)$ can be dropped. This can be deduced from a close reading of \cite{MVZ}, since the nefness of $\Omega^1_Y(\log D)$ is only used in \cite{MVZ} to deduce the nefness of $\det(\Omega_i)$ for all $i$. We capture this observation in the following proposition, which we state and will use only in the compact case $Y=U$, thus avoiding the complications mentioned in \autoref{rem:BMY_Yau} above.

\begin{prop}[Characterisation of compact smooth totally geodesic ball quotients]\label{prop:MVZnonef}
Let $Y$ be a smooth projective variety such that $\Omega^1_{Y}$ is stable and $\omega_Y$ is ample. Then the statements of \cite[Thm.~0.2]{MVZ} hold, that is: if $f : A\to Y$ is a family of principally polarized Abelian varieties of dimension $g$ such that the associated period map $\varphi:U \to \cA_g$ is generically finite, then the image $\varphi(U)$ is a totally geodesic ball quotient if and only if $\Sym^{m}\bigl(\Omega^1_{Y}\bigr)$  is slope-stable with respect to $K_{Y}$ for all $m$, and there is a decomposition of the VHS $R^1f_* \underline{\bC}$  as
	\begin{equation}
	    R^1f_* \underline{\bC}=\bW\oplus \bU,
	\end{equation}
	such that $\bU$ is a unitary local system and all irreducible components $\bV$ of $\bW$ fulfil the following conditions: 
	\begin{enumerate}
	    \item[(i)] $\bV$ satisfies Arakelov equality $\mu(E^{1,0}) -\mu(E^{0,1}) = \mu\left(\Omega^1_{Y}\right)$;
	    \item[(ii)] $\zeta(E, \theta)=\frac{\rk(E^{1,0})\rk(E^{0,1})(d+1)}{\rk(E)d}$
	\end{enumerate}
		    Here,  $(E, \theta)=(E^{1,0}\oplus E^{0,1}, \theta)$ is the system of log-Hodge bundles associated with the $\mathbb{C}$-VHS corresponding to $\mathbb{V}$, and $\zeta(E, \theta)$ is the complexity of its Higgs field.
\end{prop}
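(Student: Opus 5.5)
The plan is not to reprove \cite[Thm.~0.2]{MVZ} but to reread its proof in the compact case $Y=U$ and check that, once $\Omega^1_Y$ is stable, every step survives without the nefness part of \autoref{assump:MVZ}. First I would list which hypotheses of \autoref{assump:MVZ} are used, and where.

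\emph{Positivity and analytic input.} Ampleness of $\omega_Y$ with respect to $U=Y$ is just ampleness of $K_Y$, which is assumed. In the compact case the Aubin--Yau theorem gives a Kähler--Einstein metric in $c_1(K_Y)$, which is automatically complete. So the completeness issue of \autoref{rem:BMY_Yau} and the problematic parts of \cite{Yau93} never arise, and all analytic inputs of \cite[Sect.~1.1]{MVZ} are available unconditionally. Since $\Omega^1_Y$ is stable, the decomposition \eqref{eq:dec} has a single factor $\Omega_1=\Omega^1_Y$. Hence the universal cover $\tilde U$ is de Rham-irreducible. Moreover $\det\Omega_1=\omega_Y$ is ample, in particular nef, and this is exactly the conclusion for which \cite{MVZ} invokes nefness of $\Omega^1_Y(\log D)$.

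\emph{Running through the MVZ argument.} Next I would go through the argument step by step. The Arakelov inequality of \cite[Thm.~1]{VZ07} and the semistability of $E^{1,0}$ and $E^{0,1}$ in the equality case use only that slopes are taken with respect to a nef and big polarisation, here $K_Y$. The Simpson correspondence for the Higgs bundles attached to the irreducible summands $\bV\subset\bW$ needs only ampleness of $K_Y$. The use of the length condition (ii) to produce, together with the Arakelov equality (i), a Higgs subsheaf built from $\Sym^m(\Omega^1_Y)$ requires the stability of all $\Sym^m(\Omega^1_Y)$. That stability is precisely our Type~B hypothesis, so the conclusion is that $\Omega^1_Y$ is of Type~B. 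The argument then shows that the Miyaoka--Yau-type equality \eqref{eq:BG} holds, with slopes computed against $c_1(\omega_Y)$. Yau's compact uniformisation theorem, with $K_Y$ ample, then gives $\tilde U\cong\mathbb{B}_d$. Finally, the period map lifts to an equivariant holomorphic map $\mathbb{B}_d\to\sieg_g$. The Arakelov equality forces this map to be isometric up to scaling, and therefore totally geodesic. The necessity direction is \cite[Prop.~0.1]{MVZ}, which uses no nefness at all.

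\emph{Main obstacle.} The main difficulty is to verify that nefness of $\Omega^1_Y$ is not used implicitly anywhere else. I would check in particular the intersection-theoretic estimates in \cite{MVZ} of the shape $c_1(\cdot)\cdot c_1(\Omega_i)^{a}\cdot c_1(\omega_Y)^{b}\geq 0$, and any place where nefness of quotient bundles of $\Omega^1_Y$ is invoked. With a single factor, every such product becomes a product of the ample class $c_1(\omega_Y)$ with classes of semistable sheaves. There I would substitute the standard positivity for ample polarisations, such as Bogomolov-type inequalities and Hodge index arguments. My expectation, following the observation in the text, is that these are the only occurrences, so that the proposition follows.
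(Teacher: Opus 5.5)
Your plan is essentially the paper's own justification. In the compact case the Kähler--Einstein metric is automatically complete, so the problems with \cite{Yau93} never arise, and \cite{MVZ} use nefness of $\Omega^1_Y(\log D)$ only to obtain nefness of the $\det(\Omega_i)$; here this is automatic, since the single stable factor has $\det\Omega^1_Y=\omega_Y$ ample.

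Two details of your sketch of \cite{MVZ} are inaccurate, though neither is load-bearing. First, stability of $\Omega^1_Y$ alone does not make $\tilde U$ de Rham irreducible: deck transformations can permute factors, as for mixed quotients of $\mathbb{B}_1\times\mathbb{B}_1$, and it is stability of $\Sym^2\Omega^1_Y$ that excludes this. Second, total geodesy does not come from ``Arakelov equality $\Rightarrow$ isometry up to scaling'', since a numerical equality gives no pointwise isometry and a holomorphic isometry need not be totally geodesic. It comes instead from the holomorphic orthogonal splitting of $T_Y\subset\varphi^*T_{\cA_g}$, forced by the Arakelov and length equalities together, combined with Mok's second-fundamental-form argument, as in \autoref{prop:directI}, \autoref{prop:orthsplit} and \autoref{totgeoemb}.
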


\begin{rem}

For a general connection between nefness and semistability of vector bundles see e.g.~\cite[Thm.~1.1]{RadloffJahnke} and \cite[Sect.~4a]{Nakayama}; in our setup, however, these considerations  do not lead to a direct implication between the two conditions, as it is not clear that stability will be preserved by restriction to arbitrary curves. 
\end{rem}

\subsection{Mumford compactification of totally geodesic subvarieties}\label{sec:Mumfordcompact}
Recall that the moduli space $\A_g$ is a quotient of the Siegel space $\sieg_g$ and denote by $\pi: \sieg_g\ra \A_g$ the projection map. 

\begin{definition}[Totally geodesic subvarieties]
An algebraic subvariety $Z \subset \A_g$ is called \emph{totally geodesic} if $Z=\pi(X)$ for some connected totally geodesic submanifold $X\subset \sieg_g$. We say that $X$ is the \emph{symmetric space uniformising $Z$}. 
\end{definition}
Totally geodesic submanifolds in $\sieg_g$ are themselves Hermitian symmetric space of non-compact type. In particular, such a submanifold $X$ is simply connected and, as every simply connected symmetric space, decomposes as the product 
\begin{gather}\label{eq:symmirr}
  X\cong X_1\times \dots \times X_s  
\end{gather}
of irreducible symmetric spaces, see e.g.~\cite[Cor.~6.12]{ziller}, the  classification of the irreducible pieces being classical, see e.g.~ \cite[Ch.~X]{Helgason}. 

The following proposition recalls from \cite[Section~3]{MVZ} some properties of the Mumford compactification of a smooth totally geodesic subvariety $Z$ of a fine moduli space $\A_g$ of principally polarised Abelian varieties with a suitable level structure, which we will use at the very end of our proof, see \autoref{subsect:noncompact_case}.

\begin{prop}[Properties of Mumford compactification, {\cite[Cor.~3.2]{MVZ}}]\label{prop:mumford}
Let $Z\subset A_g$ be a smooth totally geodesic subvariety of a fine moduli space $\A_g$ of principally polarised Abelian varieties with $N$-level structure with $N\geq 3$. Assume that the uniformising symmetric space $X$ of $Z$ decomposes as the product $X\cong X_1\times \dots \times X_s$ of irreducible symmetric spaces as in \eqref{eq:symmirr}. Let $\pi: X \ra Z$ be the projection map. The Mumford compactification  $(Y, D)$ of $Z\subset \A_g$ is a smooth snc compactification of $Z$ satisfying:
\begin{enumerate}
    \item the log-canonical bundle $\omega_{Y}(D)$ is big, nef, and ample with respect to $Z$;
    \item The log-cotangent $\Omega^1_{Y}(\log D)$ is polystable with respect to $\omega_{Y}(D)$.
    \item If $\Omega^1_{Y}(\log D)=\Omega_1\oplus \dots \oplus \Omega_r$ is the decomposition as a direct sum of stable sheaves, then $r=s$ and, for a suitable choice of the indices, $\pi^*(\Omega_i|_Z)=\mathrm{pr}_i^*\Omega^1_{X_i}$. 
\end{enumerate}
 In particular, if $Z$ is a surface and $X\cong \mathbb{B}_1 \times \mathbb{B}_1$, then $\Sym^{m}\Omega^1_{Y}(\log D)$ is not stable for all $m\geq 1$. In fact, for all $m\geq 1$ the line bundle $\mathscr{O}_{Y}\bigl(m(K_{Y}+D)\bigr)$ is a direct summand of $\Sym^{2m}\Omega^1_{Y}(\log D)$ that violates stability. 
\end{prop}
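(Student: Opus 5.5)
The plan has three parts. Parts (1)–(3) I would obtain from Mumford's theory of toroidal compactifications and good metrics, essentially following \cite[Sect.~3]{MVZ}. The final assertion on the bidisc is then a short slope computation.

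\emph{Step 1: the compactification and part (1).} Since $Z$ is smooth and $\A_g$ is a fine moduli space with level $N\geq 3$, the arithmetic group $\Gamma$ uniformising $Z=\Gamma\backslash X$ is neat. By \cite{Mumford} (Ash–Mumford–Rapoport–Tai), a suitable choice of admissible fan gives a smooth projective toroidal compactification $Y$ with snc boundary $D$. There is a natural morphism $\tau:Y\to Z^{\mathrm{BB}}$ to the Baily–Borel compactification, and $\tau$ is an isomorphism over $Z$. Mumford's extension of Hirzebruch proportionality identifies $\omega_Y(D)$ with the canonical extension of the automorphic canonical bundle, and this is $\tau^*$ of an ample line bundle on $Z^{\mathrm{BB}}$. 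Hence $\omega_Y(D)$ is nef and big, and it is ample with respect to $Z$ in the sense of \autoref{def:ampleonU}.

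\emph{Step 2: splitting and parts (2), (3).} The de Rham decomposition $X\cong X_1\times\dots\times X_s$ splits $\Omega^1_X=\bigoplus_i \mathrm{pr}_i^*\Omega^1_{X_i}$ into parallel, $\mathrm{Aut}(X)^0$-homogeneous subbundles. Passing to a finite-index neat subgroup if necessary, these summands are $\Gamma$-invariant, so they descend to $Z$. Mumford's canonical extension then extends them to locally free sheaves $\Omega_i$ with $\Omega^1_Y(\log D)=\bigoplus_i\Omega_i$. The invariant Kähler–Einstein metric is a good metric in Mumford's sense, so Chern–Weil computations of degrees with respect to $\omega_Y(D)$ may be done on $Z$ with the invariant forms.

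On each factor the metric is Hermitian–Einstein, so each $\Omega_i$ is polystable with respect to $\omega_Y(D)$. Moreover, the isotropy representation of an irreducible Hermitian symmetric space is irreducible. A saturated subsheaf of $\Omega_i$ is therefore not parallel, and by the Gauss–Codazzi argument with the good metric its slope is strictly smaller. This gives stability of each $\Omega_i$ and the identification in (3), with $r=s$. I expect this to be the main obstacle: one must justify the Chern–Weil inequality for subsheaves when the polarisation is only nef and big and the metric has singularities along $D$. I would use Mumford's growth estimates to show that the relevant currents have no mass on $D$.

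\emph{Step 3: the bidisc.} If $\dim Z=2$ and $X\cong\mathbb{B}_1\times\mathbb{B}_1$, then $\Omega^1_Y(\log D)=\Omega_1\oplus\Omega_2$ with $\Omega_1,\Omega_2$ invertible and $\Omega_1\otimes\Omega_2=\omega_Y(D)$. Consequently
\[
\Sym^{k}\Omega^1_Y(\log D)=\bigoplus_{a=0}^{k}\Omega_1^{\otimes a}\otimes\Omega_2^{\otimes(k-a)}
\]
is a direct sum of $k+1\geq 2$ line bundles, so it is never stable. For $k=2m$ the summand with $a=m$ equals $\mathscr{O}_Y\bigl(m(K_Y+D)\bigr)$. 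Its slope is $m(K_Y+D)^2$. This coincides with
\[
\mu\bigl(\Sym^{2m}\Omega^1_Y(\log D)\bigr)=\tfrac{2m}{2}\,c_1\bigl(\Omega^1_Y(\log D)\bigr)\cdot(K_Y+D)=m(K_Y+D)^2 .
\]
Thus this summand is a proper subsheaf of the same slope, and it violates stability.
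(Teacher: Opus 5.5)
\textbf{Final assertion (bidisc case).} Your Step 3 is the paper's argument, and it is slightly leaner. You only use that $\Omega^1_Y(\log D)=\Omega_1\oplus\Omega_2$ splits into two line bundles with $\Omega_1\otimes\Omega_2\cong\omega_Y(D)$, which holds by taking determinants. Hence $\Omega_1^{\otimes m}\otimes\Omega_2^{\otimes m}\cong\mathscr{O}_Y\bigl(m(K_Y+D)\bigr)$ has slope $m(K_Y+D)^2=\mu\bigl(\Sym^{2m}\Omega^1_Y(\log D)\bigr)$. The paper additionally uses $\mu(\Omega_1)=\mu(\Omega_2)$, coming from polystability, to see that every summand of every symmetric power has the same slope. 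You do not need this for the destabilising summand.

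\textbf{Parts (1)--(3).} Here your route differs from the paper's, which proves nothing. It cites \cite[Cor.~3.2]{MVZ} and only checks that corollary's hypothesis: a smooth totally geodesic $Z$ in a fine $\A_g$ is the base of a Kuga fibre space, which is \cite[Thm.~2.3]{MVZ}. Your Steps 1--2 instead sketch the theory behind that citation: toroidal compactification, Mumford's proportionality and good metrics, and the Hermitian--Einstein metric together with irreducibility of the isotropy representation. This is the right circle of ideas and would make the statement self-contained. As written, however, it is an outline with two unproved inputs.

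\begin{itemize}
\item You assert without argument that $Z=\Gamma\backslash X$ with $\Gamma$ a neat \emph{arithmetic} group. This is exactly what your use of the toroidal (AMRT) compactification and Mumford's proportionality presupposes. It is also precisely the bridge the paper imports from \cite[Thm.~2.3]{MVZ}; it can alternatively be extracted from the description of totally geodesic subvarieties in \cite{Moonen}. It is not a formality, since a lattice acting on a ball factor need not a priori be arithmetic. It should be justified by reference rather than assumed.
\item You need a strict slope inequality for arbitrary saturated subsheaves of $\Omega_i$. The polarisation $\omega_Y(D)$ is only nef and big, and the metric is singular along $D$. You flag this yourself as the main obstacle but do not carry it out. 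It is the analytic content behind \cite[Sect.~3]{MVZ}.
\end{itemize}

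So either fill in these two points, or do as the paper does: cite \cite[Cor.~3.2]{MVZ} after verifying its hypothesis via \cite[Thm.~2.3]{MVZ}. The latter is all this attributed proposition requires.
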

\begin{proof}
The points (1) to (3) are \cite[Cor~3.2]{MVZ}. Indeed, by \cite[Theorem 2.3]{MVZ}, since $Z$ is a smooth totally geodesic subvariety of $\A_g$, it is the base of a Kuga fiber space and thus the assumptions of \cite[Cor~3.2]{MVZ} are satisfied. As regards the last statement, if $X\cong X_1 \times X_2$ with $X_1\cong X_2 \cong  \mathbb{B}_1$, then point (3) implies that $\Omega^1_{Y}(\log D)=\Omega_1\oplus \Omega_2$ with $\pi^*(\Omega_i|_Z)=\mathrm{pr}_i^*\Omega^1_{X_i}$. Denoting by $\mu$ the slope with respect to $\omega_{Y}(D)$, we have that  $\mu(\Omega^1_{Y}(\log D))=\mu(\Omega_1)=\mu(\Omega_2)$, thus is it not stable.  More generally, for all $m\geq 1$ we have that $\mu(\Sym^m\Omega^1_{Y}(D))=m\mu(\Omega^1_{Y}(\log D))$ and, on the other hand, $\Sym^m\Omega^1_{Y}(D)=\oplus_{k=0}^m\Omega_1^k\otimes \Omega_2^{m-k}$ with each summand having slope $k\mu(\Omega_1)+(m-k)\mu(\Omega_2)=m\mu(\Omega^1_{Y}(\log D))$. Thus each summand ``destabilises'' the symmetric power. The last statement follows by observing that $\mathscr{O}_{Y}\bigl(m(K_{Y}+D)\bigr)=\Omega_1^m\otimes \Omega_2^{m}$, which appears as direct summand in $\Sym^{2m}\Omega^1_{Y}(D)$.
\end{proof}

	\subsection{Singularities and birational models}

In this section, we discuss the necessary notions and results from birational geometry that we will use. 

\subsubsection{Klt and lc pairs}

We will mainly work with the following types of singularities originating in the Minimal Model Program. The reader is refereed to \cite{kollarmori}, \cite{matsuki}, \cite{kollar} and \cite{Kawamata2024} for  in-depth discussions and examples. 
	\begin{definition}[Singularities of the MMP]\label{def:MMPsings}
		Let $X$ be a normal, quasi-projective variety and $D$ an effective Weil $\bQ$-divisor on $X$.   We say that $(X, D)$ is a \emph{pair}  if  all coefficients of $D$ are less than or equal to one and the Weil $\bQ$-divisor $K_X+D$ is $\bQ$-Cartier.
	
		We say that a pair $(X, D)$ is a \emph{klt pair} (resp. \emph{lc pair}) if \emph{for any} log resolution $\pi: \tilde X \rightarrow X$ of $(X, D)$, writing a $\bQ$-linear equivalence 		
		\begin{equation}\label{eq:discrepancy}
			K_{\tilde X} + \pi_*^{-1}(D) \;\sim_{\bQ} \; \pi^*(K_X+D)+ \sum_{i}a_i E_i,
		\end{equation}
		where $\pi_*^{-1}(D)$ denotes the strict transform and hence the $E_i\subset \tilde X$ are $\pi$-exceptional, the coefficients (called \emph{discrepancies}) $a_i\in\bQ$ satisfy $a_i>-1$, (resp. $a_i\geq-1$)  for all $i$.
		A normal, quasi-projective variety $X$ is said to be \emph{klt} (resp. lc) if $(X, 0)$ is klt (resp. lc).   A pair $(X,0)$ and then $X$ itself is called \emph{canonical / terminal} if all the discrepancies in any log resolution are at least $0$ / at least $1$.
		
		A pair $(X, D)$ is called \emph{dlt}, if \emph{there exists} a log resolution $\pi: \tilde X \rightarrow X$ such that the discrepancies $a_i$ appearing in the corresponding equivalence \eqref{eq:discrepancy} satisfy $a_i > -1$ for all $i$.
		
		\end{definition}
		
		\begin{rem}
		    We will mostly study pairs $(X, D)$ with reduced boundary $D = \sum_j D_j$; i.e., all the coefficients are equal to $1$.
			Minimal and canonical models of smooth projective varieties $X$ have canonical singularities. Minimal models of snc pairs $(X, D)$ are dlt pairs, while log canonical models of snc pairs of general type have lc singularities, see the discussion in Section~\ref{subsubsect:model} below. 
		\end{rem}

	\subsubsection{Orbifold pairs}\label{subsubsect:orbifold}
	
  Let $X$ be a normal quasi-projective variety. If the associated analytic space $X^{an}$ has \emph{finite quotient singularities}, i.e., is locally biholomorphic to a complex space of the form $\mathbb{C}^n /G$, where $G \subset \mathrm{GL}_n(\mathbb{C})$ is a finite subgroup without quasi-reflections, then the corresponding quotient map $q: \mathbb{C}^n \to \mathbb{C}^n/G$ branches only in codimension two or higher. Therefore, to $X$ one can associate in a unique way an (algebraic) Deligne-Mumford stack $\mathcal{X}$ with coarse moduli space $\mathcal{X} \to X$. 
	
	\begin{definition}[Orbifold pairs]\label{def:orbifold_pairs}
		We say that a pair $(X, D)$ is an \emph{orbifold pair} if $X^{an}$ has finite quotient singularities and if in addition $D$ defines an snc divisor on the corresponding smooth Deligne-Mumford stack $\mathcal{X}$.
	\end{definition}

	This notion for us gains its importance from the observation that, in addition to the classical facts that klt surface singularities are finite quotient singularities,  \cite[Prop.~4.18]{kollarmori}, and that dlt surface pairs are orbifold pairs, see e.g.~\cite[Sect.~1.13.6]{Kawamata2024},  many lc surface pairs are actually also orbifold. 
	
\begin{prop}\label{prop:lcpairs_as_quotients}
Let $p\in (X, D)$ be a (local) log canonical surface pair with reduced boundary divisor $D = \sum_{j=1}^n D_j$ and $(X, D)_{\sing} = \{p\}$. Then, $(X, D)$ is an orbifold pair. 
\end{prop}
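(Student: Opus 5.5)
The plan is to use the classification of log canonical surface pairs with reduced boundary, as in \cite[Sect.~4.1]{kollarmori} or \cite[Ch.~3]{kollar}, and to check case by case that the pair is an orbifold pair. The first reduction is the case $D=0$ near $p$. Then $X$ is an lc surface singularity. If it is klt, it is a finite quotient singularity by \cite[Prop.~4.18]{kollarmori}, and we are done. The strictly lc ones with $D=0$ (simple elliptic, cusp singularities and their $\mathbb{Z}/2,\mathbb{Z}/3,\mathbb{Z}/4,\mathbb{Z}/6$ quotients) are not quotient singularities. This means the statement must implicitly exclude them, or the hypothesis $(X,D)_{\sing}=\{p\}$ with $D=\sum_{j=1}^n D_j$ is meant with $n\geq 1$ and $p\in D$. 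I will read it that way and assume $p\in\operatorname{Supp}D$.

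Next, suppose $p\in D$ with $D$ reduced and nonzero. Inversion of adjunction (or the classification of lc pairs whose boundary has coefficients one) gives the following: the minimal resolution's dual graph, together with the strict transforms of the $D_j$, is a chain. In other words, $(X,D)$ is analytically locally toric. There are two possibilities.
\begin{enumerate}
\item[(a)] One component of $D$ passes through $p$. Then $(X,D)$ is plt at $p$, and $X$ is a cyclic quotient singularity $\mathbb{C}^2/\mu_r$ with $D$ the image of a coordinate axis. One could also have the case where the graph is the chain with a fork of two $(-2)$-curves at the far end. In that case $(X,D)$ is still lc, and $X$ is a $\mathbb{Z}/2$-quotient of a cyclic quotient singularity, with $D$ the image of an invariant axis. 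So it is again a quotient $\mathbb{C}^2/G$ with $G$ a finite (binary dihedral type) group acting such that the preimage of $D$ is a smooth curve. I will check this using the explicit description in \cite[Thm.~4.15]{kollarmori}.
\item[(b)] Two components pass through $p$. Then the graph is a chain with both $D_j$ attached at the ends. Hence $X$ is the cyclic quotient $\frac1r(1,a)$, and $D$ is the image of $\{xy=0\}$.
\end{enumerate}

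In each case I will write down explicitly the group $G\subset \GL_2(\mathbb{C})$ without quasi-reflections such that $X\cong\mathbb{C}^2/G$ near $p$. For cyclic quotients, $G$ is small exactly when $\gcd(a,r)=1$, which holds. I will then verify that the preimage $q^{-1}(D)$ is a union of coordinate axes, and hence snc and $G$-invariant. This shows that $D$ defines an snc divisor on the quotient stack $[\mathbb{C}^2/G]$, which is precisely \autoref{def:orbifold_pairs}.

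The main obstacle is case (a) with the dihedral-type end: one must confirm that the preimage of $D$ under the quotient by a non-abelian small group is smooth rather than singular. I will try to realize $(X,D)$ as a $\mathbb{Z}/2$-quotient of a case-(b) pair $(\mathbb{C}^2/\mu_r,\{xy=0\})$ in which the involution swaps the two branches. If that is right, the branches become identified, which is inconsistent with $D$ being a single smooth branch. So I must instead identify $D$ with the image of the fixed curve of an appropriate element. If this fails, I will fall back on the index-one cover: take the canonical cover associated with $K_X+D$, which is étale in codimension one and reduces to case (b) or to the smooth case, and then compose the covers.
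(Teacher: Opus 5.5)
Your reading that $p\in\operatorname{Supp}D$ must be assumed is correct: a simple elliptic singularity with $D=0$ is a counterexample otherwise. The paper's argument also relies on $p\in D$ implicitly, when it drops $D$ and concludes that $X$ is klt at $p$. Your treatment of the two toric cases of \cite[Thm.~4.15]{kollarmori} is fine.

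The gap is the remaining case: one branch of $D$ through $p$, with $(X,D)$ lc but not plt and the fork of two $(-2)$-curves. Here you set out to prove something false. Take the non-abelian $G$ you yourself identify. A smooth $G$-invariant curve germ through $0$ would have a $G$-invariant tangent line. By complete reducibility $G$ would then be diagonalizable, hence abelian. More generally, the quotient map $q\colon\mathbb{C}^2\to X$ is étale in codimension one, so $(X,D)$ is plt if and only if $(\mathbb{C}^2,q^{-1}(D))$ is plt, i.e.\ if and only if $q^{-1}(D)$ is smooth. Hence in the fork case $q^{-1}(D)$ is never smooth, $D$ is not the image of an invariant axis, and $(X,D)$ is not locally toric. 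What actually happens is exactly the picture you discard: $q^{-1}(D)$ is two transversal smooth branches interchanged by $G$. Their common image is a single irreducible curve, smooth at $p$, so there is no inconsistency. For example, take $G=\langle\sigma\rangle$ with $\sigma(x,y)=(y,-x)$, which is small of order $4$. Then $X$ is an $A_3$ singularity, and the image of $\{xy=0\}$ is one smooth curve whose strict transform meets the middle curve of the resolution. This is the fork configuration in its simplest form. Your index-one-cover fallback would in fact produce this double cover, but you have not carried it out. You would still need that the cover is again lc of type (b), and that the composite $\mathbb{C}^2\to\tilde X\to X$ is a quotient by a small group.

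The missing idea, which makes the case analysis unnecessary, is the paper's. Since $p\in D$, every exceptional divisor over $p$ appears in the pullback of $D$ with positive coefficient. So once $K_X$ is known to be $\mathbb{Q}$-Cartier (via rationality, hence $\mathbb{Q}$-factoriality, of $p$), $X$ is klt at $p$, and $X\cong\mathbb{C}^2/G$ with $G$ small. Then $K_{\mathbb{C}^2}+q^{-1}(D)=q^*(K_X+D)$, so $(\mathbb{C}^2,q^{-1}(D))$ is lc. A reduced curve $B$ on a smooth surface with $(\mathbb{C}^2,B)$ lc is smooth or has an ordinary node. In either case it is a $G$-invariant snc divisor, which is exactly the orbifold condition. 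This handles the plt, two-branch and fork cases at once, and in the fork case shows directly that $q^{-1}(D)$ is a node.
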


This follows from the classification done in \cite[Thm.~9.6]{KawamataCrepant}, discussed also in \cite[Thm.~4-6-30]{matsuki}, and presented in a different way in \cite[Sect.~3.1]{Kobayashi_survey}; let us recall the main steps of the argument here. 
\begin{proof}[Sketch of proof of \autoref{prop:lcpairs_as_quotients}]
The point $p$ is a rational singularity by \cite[Prop.~2.28]{kollar} or \cite[Prop.~1.13.15]{Kawamata2024}.  Therefore, $X$ is $\mathbb{Q}$-factorial by \cite[Prop.~10.9]{kollar} or \cite[Prop.~1.13.16]{Kawamata2024}, so that dropping the boundary $D$ leads to $X$ having a klt and hence finite quotient singularity at $p$, see \cite[Lem.~9.2]{KawamataCrepant}. Locally, we hence get a finite quotient morphism 
\[
f: 0\in U \longrightarrow  X \ni p
\]
for a finite group acting linearly on $U \subset \mathbb{C}^2$
that is \'etale outside of $f^{-1}(p)=\{0\}$, and we consider the pullback pair $(U, f^{-1}(D))$, which is again log canonical e.g.~by \cite[Lem.~1.13.17]{Kawamata2024}, and therefore snc e.g.~by \cite[Prop.~2.42]{kollarmori}.
\end{proof}

       \subsubsection{(Log-)Canonical models of snc/klt varieties and pairs}  \label{subsubsect:model}
    
    A normal projective variety $X$ with $\mathbb{Q}$-Cartier canonical divisor $K_X$ is said to be \emph{of general type} if $K_X$ is big. Let $X$ be a klt variety of general type whose canonical divisor  $K_X$ is nef. Then, $K_X$ is semiample  by the Basepoint-Free Theorem \cite[Thm.~3.3]{kollarmori}; i.e., there exists a multiple of $K_X$ that is base-point free and hence defines a birational morphism  $
    q_X : X \to X_\can$     to a variety $X_\can$, called the \emph{canonical model}. The singularities of $X_\can$ are likewise klt, the $\mathbb{Q}$-Cartier divisor  $K_{X_\can}$ is ample, and 
    \begin{equation}K_X = q_X^* (K_{X_\can}), 
    \end{equation} see \cite[Lem.~2.30 or Prop.~3.51]{kollarmori}. 
    
   The following comparison result relating the fundamental groups of \( X \) and \( X_\can \) will be important in our study.
    
    \begin{theorem}[{\cite[Cor.~1.1(1)]{takayama}}]
    	\label{thm:takayama}
    	Let $X$ be a projective klt variety of general type with nef canonical divisor and let $q_X:X\to X_\can$ be the canonical morphism. Then the induced homomorphism of fundamental groups $q_{X*}:\pi_1(X)\to \pi_1(X_\can)$ is an isomorphism.
    \end{theorem}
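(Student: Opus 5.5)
The plan is to split the claim into surjectivity and injectivity of $q_{X*}$, and to handle injectivity by a local analysis over small neighbourhoods of points of $X_\can$.

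\emph{Surjectivity.} The morphism $q_X : X \to X_\can$ is proper and birational, and $X_\can$ is normal. By Zariski's main theorem $q_X$ has connected fibres. For a proper surjective morphism with connected fibres onto a normal variety, the induced map on fundamental groups is surjective. One way to see this: take a finite étale cover of $X_\can$ (or a covering space) to which the identity map of $X$ fails to lift. Pulling it back to $X$ and using connectedness of the fibres shows that it stays connected, so it is trivial, which gives surjectivity.

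\emph{Injectivity: reduction to a local statement.} Let $E \subset X_\can$ be the (closed) locus over which $q_X$ is not an isomorphism. For every $y \in X_\can$ choose a small contractible Stein neighbourhood $V_y$ (locally, $X_\can$ is a cone over its link, so such neighbourhoods exist). Injectivity will follow from a Seifert--van Kampen argument if we prove the following local statement:
\[
\text{the image of } \pi_1\bigl(q_X^{-1}(V_y)\bigr) \to \pi_1(X) \text{ is trivial for every } y,
\]
or, more strongly, that $q_X^{-1}(V_y)$ is simply connected. The gluing step goes as follows. Cover $X_\can$ by the sets $V_y$. A loop in $X$ that becomes null-homotopic in $X_\can$ has a null-homotopy that can be subdivided into pieces lying in single sets $V_y$. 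We lift the edges of this subdivision to $X$ and fill each small square inside $q_X^{-1}(V_y)$, which is possible by the local triviality above.

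\emph{Injectivity: the local statement.} Since $K_X = q_X^*K_{X_\can}$, we have $K_X \equiv_{q_X} 0$. Hence $-K_X$ is $q_X$-nef and $q_X$-big, and $X$ is klt. Two consequences follow.
\begin{enumerate}
\item By the Hacon--McKernan theorem on rational chain connectedness of fibres of such morphisms, every fibre $q_X^{-1}(y)$ is rationally chain connected.
\item By relative Kawamata--Viehweg vanishing, $R^iq_{X*}\mathscr{O}_X = 0$ for $i>0$; in particular the fibres have vanishing higher structure-sheaf cohomology.
\end{enumerate}
Since $q_X^{-1}(V_y)$ deformation retracts onto the fibre $q_X^{-1}(y)$ for small $V_y$, it suffices to show that $\pi_1$ of this fibre (or of a neighbourhood of it) is trivial.

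I expect this last step to be the main obstacle: rational chain connectedness does not by itself force topological simple connectedness of a possibly singular, reducible fibre. My first attempt would be Takayama's argument. Consider a finite étale cover of $q_X^{-1}(V_y)$. Its total space is again klt, carries a relatively trivial canonical class, and so also satisfies $R^i(\cdot)_*\mathscr{O}=0$. Comparing Euler characteristics of $\mathscr{O}$ on the fibre and on its étale cover should force the degree of the cover to be $1$. This kills finite quotients of the local fundamental group.

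To kill the whole group rather than only its finite quotients, I would use the rational chain connectedness directly. Rational curves through a general point generate $\pi_1$ in the sense that any loop is homotopic into a chain of rational curves, and each rational curve has trivial fundamental group. This reduces the question to the finitely generated group obtained from gluing these curves, and the finite-cover argument above then shows that this group is trivial. This completes the local statement and hence the isomorphism $q_{X*}:\pi_1(X)\to\pi_1(X_\can)$.
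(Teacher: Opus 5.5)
\textbf{Verdict: there is a genuine gap in the final step.} The paper does not prove this statement; it quotes it from Takayama, so your sketch has to stand on its own.

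\textbf{What works.} Three parts of your outline are sound.
\begin{enumerate}
\item Surjectivity is fine.
\item The reduction of injectivity to simple connectedness of the sets $q_X^{-1}(V_y)$ is fine. It is cleanest in covering-space form: a connected covering of $X$ that is trivial over every $q_X^{-1}(V_y)$ descends, via Stein factorisation, to a covering of $X_{\mathrm{can}}$.
\item The Euler-characteristic argument for finite étale covers is fine, with two details to add. Getting $\chi(\mathscr{O})=1$ for the \emph{reduced} fibres needs, besides $R^iq_{X*}\mathscr{O}_X=0$, the Hodge-theoretic surjectivity $H^i(F,\mathbb{C})\to H^i(F,\mathscr{O}_F)$. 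Also, the point count must be done over every point of $V_y$, so that the finite part $\tilde V\to V_y$ of the Stein factorisation of the cover is a topological covering of the contractible $V_y$.
\end{enumerate}
However, all this yields is that $\pi_1\bigl(q_X^{-1}(y)\bigr)$ has no non-trivial finite quotients.

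\textbf{Where it fails.} Your last paragraph does not get from there to triviality.
\begin{enumerate}
\item A finitely generated, even finitely presented, group can be infinite and still have no non-trivial finite quotients; Higman's group is the standard example. So ``the finite-cover argument above then shows that this group is trivial'' is not a valid inference.
\item Rational chain connectedness does not supply the missing finiteness. Singular rational curves need not be simply connected: a nodal rational curve has $\pi_1\cong\mathbb{Z}$. Chains of rational curves in a reducible fibre can close up into cycles. The group generated by loops in such chains is just some quotient of a free group, and RCC says nothing further about it.
\item More generally, the fundamental group of a union of rational varieties is governed by its dual complex and can be very complicated. Hence rational chain connectedness, vanishing of $H^{i}(\mathscr{O})$ and absence of finite étale covers of the fibre cannot by themselves force $\pi_1=1$.
\end{enumerate}

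\textbf{What is missing.} You need an independent argument that excludes infinite local fundamental groups, for instance a proof that $\pi_1\bigl(q_X^{-1}(y)\bigr)$ is finite, or at least residually finite. Once you have that, your $\chi(\mathscr{O})$-argument finishes the proof. This is the genuinely non-formal part of Takayama's theorem, which the paper simply imports by citation.
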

    
    The situation in the case of nontrivial boundary at the outset looks similar; note however the shift in the singularity class when going to the log-canonical model: If the canonical divisor  $K_X+D$ of a projective dlt pair is big and nef,  then $K_X+D$ is semi-ample, again by the Basepoint-Free Theorem . As a consequence, we get a birational morphism $q: (X, D) \ra (X_{can}, D_{can})$, 
    where $(X_\can, D_\can)$ is log-canonical, $K_{X_{can}} + D_{can}$ is ample,  and 
    \begin{equation}
        K_X+D = q^* (K_{X_\can}+D_{can}).
    \end{equation} 
    
    In our study, pairs with $D \neq 0$ will appear via compactifications of quasi-projective base spaces of families of Abelian varieties. This implies that the relevant variations of Hodge structures will be defined only on $X \setminus D$ and not on the whole of $X$. This will make it necessary to study the geometry of the open parts $q^{-1}(y) \setminus D$ of fibres of log-canonical morphisms; see Section~\ref{sec:factorization} below.

    \subsection{Chern classes and stability }\label{subsect:stability}
Let $X$ be a normal projective variety of dimension $d$ with finite quotient singularities in codimension two; e.g., $X$ could be the underlying variety of a dlt pair $(X, D)$, see \autoref{subsubsect:orbifold}. Then, there exists a theory of first and second Chern classes for reflexive coherent sheaves $\mathscr{E}$ on $X$, developed in \cite[Sec.3]{GKPTens} and in more detail in \cite{Langer24}. Eventually, the idea goes back to \cite{MuEnum}. More precisely, if $\mathscr{E}$ and $\mathscr{F}$ are such sheaves there exist symmetric $\mathbb{Q}$-multilinear forms \begin{align*}
    \c_1(\cE):N^1(X)^{n-1}_{\bQ}\to \bQ,&\quad (L_1,\dots, L_{n-1})\mapsto c_1(\cE)\cdot L_1\cdots L_{n-1},\\
    \c_1(\cE)\c_1(\cF):N^1(X)^{n-2}_{\bQ}\to \bQ,&\quad (L_1,\dots, L_{n-2})\mapsto c_1(\cE)\c_1(\cF)\cdot L_1\cdots L_{n-2},\\
    c_2(\cE):N^1(X)^{n-2}_{\bQ}\to \bQ,&\quad (L_1,\dots, L_{n-2})\mapsto c_2(\cE)\cdot L_1\cdots L_{n-2},
\end{align*}
having a number of the usual properties, summarized for example in \cite[Section 2.2.1]{mueller} and derived in detail in \cite[Sect.~2.2.1]{muellerthesis}.

\begin{rem}\label{rem:deligneext}
If $X$  has finite quotient singularities everywhere, e.g.~if $X$ is the variety underlying an orbifold pair $(X,D)$, we say that $\mathcal{E}$ is an \emph{orbifold vector bundle} if it is induced by a vector bundle on the smooth DM-stack $\cX$ associated to $X$. Note that $H^i(X,\bQ)\cong H^i(\cX,\bQ)$, so we can define  orbifold $\mathbb{Q}$-Chern classes of orbifold vector bundles by considering the Chern classes of the associated vector bundle on the stack.  We will denote these classes by $\widehat{c}_i(\mathcal{E})$, $i= 1, \dots n$. Intersection theory on such spaces is equivalent to the intersection theory of smooth DM-stacks, which is classical, see e.g.~\cite{MuEnum}. For $i=1,2$, the associated $\mathbb{Q}$-multilinear forms coincide with the ones defined just above.
\end{rem}

Using first Chern classes, we can define degrees and slopes as usual.
  	 
\begin{definition}[Degree and slope] Let $H\in \Div(X)_\mathbb{Q}$ be a big and nef $\bQ$-Cartier $\mathbb{Q}$-divisor with numerical class $[H] \in N^1(X)_\mathbb{Q}$, which we call a \emph{polarization}. If $\mathscr{E}$ is a reflexive sheaf on $X$, the \emph{degree} and \emph{slope} of $E$ \emph{with respect to $H$} are defined as
\[\deg_H(\mathscr{E}):=\c_1(\mathscr{E})\cdot [H]^{n-1},\quad \mu_H(\mathscr{E}):=\frac{\deg_H(\mathscr{E})}{\rk(\mathscr{E})}.\]
 \end{definition} 
 
 Once degrees are defined, we can introduce the usual notions of slope-stability and slope-semistability, which have been employed extensively in higher-dimensional birational geometry.       
      
      \begin{definition}[Stability]\label{def:stability}
A torsion-free sheaf $\mathscr{E}$ on a normal projective variety $X$ is called \emph{stable} (respectively \emph{semistable}) with respect to a polarization $H$ if any  subsheaf $\mathscr{F}\subset \mathscr{E}$ with $0<\rk(\mathscr{F})<\rk(\mathscr{E})$ satisfies 
\begin{gather*}
	\mu_H(\mathscr{F})<\mu_H(\mathscr{E})\quad\quad (\text{respectively}\ 	\mu_H(\mathscr{F})\leq \mu_H(\mathscr{E})).
\end{gather*} 
  The sheaf $\cE$ is \emph{polystable} if it is a direct sum of stable sheaves of the same slope.
   \end{definition} 
   Next we discuss  generalisations of classical results  concerning semistability (for which the reader is referred to \cite{HL}) to singular spaces.   First, semistability puts restriction on morphisms, as the following precise result shows; the proof of the smooth case carries over to the singular setup. 
\begin{lemma}[{\cite[Addendum 5.2, v,ix]{MVZ}}]
	\label{lem:addendum}
Let $(X,D)$ be a pair and $H$ be a nef polarization ample on $U:=X\setminus D$. Let $\beta: \mathscr{G}\to \mathscr{F}$ be a morphism of two $\mu_H$-semistable sheaves of the same slope. Then the saturated image $\Im(\beta)_s$ is  a $\mu_H$-semistable subsheaf of $\mathscr{F}$ of slope $\mu_H(\mathscr{F})$  and
the inclusion $\Im(\beta)\to \Im(\beta)_s$  is an isomorphism on $U$.
\end{lemma}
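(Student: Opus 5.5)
The plan is to follow the classical argument for semistable sheaves on smooth varieties (cf.~\cite{HL}), using only two facts that remain valid in our singular setting. First, slopes with respect to the nef polarization $H$ are computed by intersecting first Chern classes with $H^{d-1}$; second, for torsion-free sheaves that agree outside a closed subset, the degree changes only by an effective divisor supported on that subset.

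\textbf{Step 1: the image has slope at least $\mu_H(\mathscr{F})$.} Assume $\beta\neq 0$ and set $\mathscr{I}:=\Im(\beta)$, a torsion-free quotient of $\mathscr{G}$ and a subsheaf of $\mathscr{F}$. Semistability of $\mathscr{G}$ gives $\mu_H(\mathscr{I})\geq \mu_H(\mathscr{G})$. If $\ker\beta=0$ this is equality; otherwise it follows from additivity of $c_1$ in the exact sequence $0\to\ker\beta\to\mathscr{G}\to\mathscr{I}\to 0$, together with $\mu_H(\ker\beta)\leq\mu_H(\mathscr{G})$. Additivity of $c_1$ on short exact sequences of torsion-free sheaves holds here because $c_1$ of a torsion-free sheaf is the Weil divisor class of its determinant. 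That class is $\mathbb{Q}$-Cartier on the part of $X$ with quotient singularities, whose complement has codimension $\geq 3$, and this suffices for intersecting with $H^{d-1}$.

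\textbf{Step 2: passing to the saturation.} Let $\Im(\beta)_s\subset\mathscr{F}$ be the saturation, so that $\Im(\beta)_s/\mathscr{I}$ is a torsion sheaf $\mathscr{T}$. Then $c_1(\Im(\beta)_s)=c_1(\mathscr{I})+[\mathrm{div}\,\mathscr{T}]$, where $[\mathrm{div}\,\mathscr{T}]$ is effective and supported on the codimension-one part of $\mathrm{Supp}\,\mathscr{T}$. Since $H$ is nef, $[\mathrm{div}\,\mathscr{T}]\cdot H^{d-1}\geq 0$. Combining this with Step~1 and semistability of $\mathscr{F}$ gives
\[\mu_H(\mathscr{F})\leq\mu_H(\mathscr{I})\leq\mu_H(\Im(\beta)_s)\leq\mu_H(\mathscr{F}).\]
Hence all three slopes agree. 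The same sandwich shows that $\Im(\beta)_s$ is semistable: a subsheaf of it of larger slope would also be a subsheaf of $\mathscr{F}$ and would contradict semistability of $\mathscr{F}$.

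\textbf{Step 3: the isomorphism on $U$, which I expect to be the main obstacle.} The equalities above force $[\mathrm{div}\,\mathscr{T}]\cdot H^{d-1}=0$, and we must conclude that $\mathscr{T}|_U=0$. Since $H$ is only nef, this vanishing constrains only those components of the support that meet $U$.

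\begin{itemize}
\item \emph{Divisorial components.} Let $P$ be a prime divisor of $\mathrm{Supp}\,\mathscr{T}$ meeting $U$. Because $H$ is ample on $U$ in the sense of \autoref{def:ampleonU}, the morphism given by $|mH|$ embeds $P\cap U$. The image of $P$ is therefore $(d-1)$-dimensional, and so $P\cdot H^{d-1}>0$. Effectivity then excludes such components.
\item \emph{Higher-codimension components.} The degree does not detect components of codimension $\geq 2$. For these I would use the structure of $\mathscr{I}$ and $\Im(\beta)_s$ directly: work on the reflexive hulls, or argue in the way the slope-equality case is handled in \cite[Addendum 5.2]{MVZ}.
\end{itemize}

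The second point is the delicate one. Codimension-$\geq 2$ torsion in $\mathscr{T}$ is invisible to slopes, and I would treat it by the same argument as in the smooth case of \cite{MVZ}, reading the statement there up to modifications in codimension two.
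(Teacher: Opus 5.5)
\textbf{Verdict: genuine gap.} The second bullet of your Step~3 proves nothing, and it cannot be completed, because the claim there is false in codimension two.

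\textbf{What is correct.} Your Steps~1 and~2 and the first bullet of Step~3 are the standard smooth-case argument the paper relies on when it says the proof of \cite[Addendum 5.2]{MVZ} carries over; the paper gives nothing beyond that remark. Your argument correctly yields $\mu_H(\Im(\beta))=\mu_H(\Im(\beta)_s)=\mu_H(\mathscr{F})$, semistability of $\Im(\beta)_s$, and that $\Im(\beta)_s/\Im(\beta)$ has no divisorial component meeting $U$. Two small repairs are needed.
\begin{enumerate}
\item An arbitrary pair, even an lc one, need not have quotient singularities in codimension two; take (cone over an elliptic curve)$\times\mathbb{P}^1$. You do not need this. $\deg_H\mathscr{E}$ is the Weil divisor class of $\det\mathscr{E}$ intersected with the Cartier class $H^{d-1}$. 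Additivity in short exact sequences holds because Weil classes are determined on $X_{\reg}$.
\item The inference ``$P\cap U$ is embedded, hence $P\cdot H^{d-1}>0$'' needs a word, since $|mH|$ may have base points on $X\setminus U$. Take a resolution $\nu$ of the base locus that is an isomorphism over $U$, and write $m\nu^*H=M+F$ with $M$ free and $F\geq 0$ lying over $X\setminus U$. Nefness of $\nu^*H$ and $M$ then gives $m^{d-1}H^{d-1}\cdot P\geq M^{d-1}\cdot\tilde P>0$, where $\tilde P$ is the strict transform.
\end{enumerate}

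\textbf{The gap and a counterexample.} The assertion of the second bullet fails in codimension two even for smooth $X$ and ample $H$. Take $X=\mathbb{P}^2$, $D=0$, $H$ a line, $p\in X$, and $\beta\colon \mathscr{I}_p\hookrightarrow\mathscr{O}_X$.
\begin{itemize}
\item Both sheaves are torsion-free of rank one, hence semistable of slope $0$.
\item $\Im(\beta)_s=\mathscr{O}_X$, and $\mathscr{I}_p\to\mathscr{O}_X$ is not an isomorphism at $p\in U$.
\item If you want $\mathscr{G}$ locally free, take the non-split extension $0\to\mathscr{O}_X\to\mathscr{G}\to\mathscr{I}_p\to 0$ and compose with $\mathscr{I}_p\subset\mathscr{O}_X$. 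It is locally free by Serre's construction, since $H^0(\omega_{\mathbb{P}^2})=0$. It is semistable because a subsheaf $\mathscr{O}(a)$ with $a>0$ maps to zero in $\mathscr{I}_p$, hence would lie in $\mathscr{O}_X$.
\end{itemize}

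\textbf{What is true, and what suffices for the paper.} Your first bullet already gives the correct statement: $\Im(\beta)\to\Im(\beta)_s$ is an isomorphism on $U$ outside a closed subset of codimension $\geq 2$. An isomorphism on all of $U$ needs extra hypotheses: $\mathscr{F}$ reflexive, and $\Im(\beta)|_U$ reflexive (for example $\beta$ injective with $\mathscr{G}$ reflexive). Then $\Im(\beta)_s$ is reflexive, being the kernel of $\mathscr{F}\to(\mathscr{F}/\Im(\beta))/\mathrm{torsion}$. Two reflexive sheaves that agree off codimension two coincide.

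This is exactly how the paper uses the lemma: $\tau_{\mathrm{KS}}$ and the $\theta^{(m)}$ are injective morphisms between reflexive sheaves (see \autoref{prop:injective} and \autoref{prop:directI}). So those applications survive. The lemma itself, however, has to be stated in this weakened form; it cannot simply be ``read up to codimension two'' from \cite{MVZ}.
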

   
The following two statements are much less formal; both can be deduced from known properties of semistability on singular spaces and with respect to slightly degenerate polarisations, however. 
\begin{prop}[Semistability is preserved under tensor operations]
	\label{thm:tensor_poly}
	Let $X$ be a normal variety, $H$ a  big and semiample divisor on $X$, and let $\mathscr{E}_1, \mathscr{E}_2$ be $\mu_H$-semistable reflexive sheaves.
	Then,  the reflexive tensor product \[\mathscr{E}_1 \left[\otimes \right] \mathscr{E}_2 := (\mathscr{E}_1\otimes \mathscr{E}_2)^{**} \] is $\mu_H$-semistable. Moreover, the same is true for $ \mathscr{H}\negthinspace om(\mathscr{E}_1, \mathscr{E}_2)$, as well as for reflexive symmetric and reflexive wedge products $\mathrm{Sym}^{[n]}\mathscr{E}_j$ and $\bigwedge^{[n]}\mathscr{E}_j$ of $\mathscr{E}_1$ and $\mathscr{E}_2$. 
\end{prop}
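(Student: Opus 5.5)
The strategy is to reduce to the classical statement on a smooth projective variety with an ample polarisation, where semistability of tensor products is known (Maruyama in characteristic zero, or via Hermite--Einstein metrics and Donaldson--Uhlenbeck--Yau). Since $H$ is big and semiample, some multiple $mH$ is base point free, and its Stein factorisation $\phi\colon X \to X'$ is a birational morphism onto a normal projective variety $X'$ with $mH = \phi^*A$ for an ample Cartier divisor $A$. Let $W \subset X'$ be the open set over which $\phi$ is an isomorphism. Because $X'$ is normal, its complement has codimension at least two.

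\textbf{Step 1: transport to $X'$.} For a reflexive sheaf $\mathscr{F}$ on $X$, put $\mathscr{F}' := (\phi_*\mathscr{F})^{**}$. Intersection numbers with powers of $H$ only see classes pushed forward to $X'$, so by the projection formula
\[
c_1(\mathscr{F})\cdot H^{n-1} = \tfrac{1}{m^{n-1}}\, c_1(\mathscr{F}')\cdot A^{n-1}.
\]
Next, a saturated subsheaf $\mathscr{G} \subset \mathscr{F}$ gives a subsheaf of $\mathscr{F}'$ with the same $A$-slope. Conversely, a saturated subsheaf of $\mathscr{F}'$ restricts to $W \cong \phi^{-1}(W)$ and extends by saturation to a subsheaf of $\mathscr{F}$. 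Its slope is unchanged, because the components of $\phi^{-1}(X'\setminus W)$ that are divisors on $X$ are $\phi$-exceptional and therefore have zero intersection with $H^{n-1}$. Hence $\mathscr{F}$ is $\mu_H$-semistable if and only if $\mathscr{F}'$ is $\mu_A$-semistable.

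\textbf{Step 2: compatibility with tensor operations.} On $W$ we have $(\mathscr{E}_1[\otimes]\mathscr{E}_2)' = \mathscr{E}_1'[\otimes]\mathscr{E}_2'$, and two reflexive sheaves that agree outside codimension two are equal. The same argument applies to $\mathscr{H}om$, $\Sym^{[n]}$ and $\bigwedge^{[n]}$. It therefore suffices to prove the statement on the normal projective variety $X'$ with ample $A$.

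\textbf{Step 3: the ample case.} Let $\psi\colon \tilde X \to X'$ be a resolution. By Mehta--Ramanathan, the restriction of a $\mu_A$-semistable reflexive sheaf to a general complete intersection curve $C \in |kA|$ is semistable. Such a curve lies in the smooth locus of $X'$ and avoids the singularities of the sheaves, which live in codimension at least two. On a smooth curve, tensor products, $\mathscr{H}om$, symmetric powers and exterior powers of semistable bundles are semistable in characteristic zero. Any destabilising subsheaf of, say, $\mathscr{E}_1'[\otimes]\mathscr{E}_2'$ would restrict to a destabilising subsheaf on $C$, since degrees computed on $C$ are $k^{n-1}$ times the $A$-degrees. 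This is a contradiction.

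The main obstacle is Step 1, namely making the slope-preservation argument rigorous when $X$ is only normal, so that $c_1$ is taken in the sense of Weil divisor classes intersected with Cartier classes. The exceptional divisors must be shown to contribute zero, using $H^{n-1}\cdot E = \phi^*A^{n-1}\cdot E = 0$ for $\phi$-exceptional $E$. A further point is that the maximal destabilising subsheaf on $X$ corresponds to one on $X'$. I would handle this by working throughout with saturated subsheaves defined on the common open set, and computing all degrees via the projection formula on a resolution.
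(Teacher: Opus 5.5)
Your proof is correct. Its core is the same as the paper's: restrict to a general complete intersection curve lying in the smooth locus and avoiding the non-locally-free loci of all sheaves involved, then use the classical statements for semistable bundles on curves.

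The difference lies in how the merely big and semiample polarisation is handled. The paper works directly on $X$ with complete intersection curves for $|mH|$. It cites a restriction theorem that already covers such degenerate polarisations on singular spaces (\cite[Thm.~6.1]{CompositioPaper} with trivial Higgs fields, or alternatively \cite[Thm.~7.2]{LangerCrelle}), so its proof is two lines.

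You instead pass to the ample model $\phi\colon X\to X'$. Via the projection formula, and by transporting saturated subsheaves across the isomorphism locus $W$, you show two things: $\mu_H$-semistability of $\mathscr{F}$ is equivalent to $\mu_A$-semistability of $(\phi_*\mathscr{F})^{**}$, and this transport commutes with the reflexive tensor operations. After that you only need Mehta--Ramanathan for an ample polarisation on a normal projective variety. This is the same reduction the paper itself uses in the proof of \autoref{prop:guenanciass}. It buys self-containedness at the price of the bookkeeping in Step~1. It also explains why the two routes agree: since $\phi_*\mathscr{O}_X=\mathscr{O}_{X'}$, a general complete intersection curve for $|kmH|$ is the isomorphic preimage of one for $|kA|$.

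The ``main obstacle'' you flag is not really one. The projection formula for a Weil divisor class against $\phi^*A^{n-1}$ is standard. Semistability is compared subsheaf by subsheaf, so maximal destabilising subsheaves never enter. The resolution $\psi$ in Step~3 is not needed.
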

\begin{proof}
Using \cite[Thm.~6.1]{CompositioPaper} (with trivial Higgs fields and for sheaves not just defined on the regular locus but on the whole of $X$) iteratively, we may restrict all sheaves  occurring in the statement above to a complete intersection curve $C \subset X_{\reg}$ for $H$ along which these sheaves are locally free, preserving semistability. The claim then follows from classical results about vector bundles on the curve $C$, see e.g.~\cite{HL}.\footnote{Alternatively, we could have used the restriction theorem  \cite[Thm.~7.2]{LangerCrelle} in the first step.}
\end{proof}

\begin{prop}[Existence of Harder-Narasimhan filtration]
	\label{thm:HN}
	Let $Y$ be a normal projective variety  with a big and nef polarization $H$. Then $\mu_H$-Harder-Narasimhan filtrations of torsion-free, coherent sheaves of positive rank on $Y$  exist. 
\end{prop}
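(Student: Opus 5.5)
The plan is to follow the classical construction of Harder--Narasimhan filtrations, as in \cite{HL}, and check that each step still works when $H$ is only big and nef and $Y$ is merely normal. What the classical argument uses is: (a) the degree $\deg_H$ is additive in short exact sequences of torsion-free sheaves and depends only on the reflexive hull in codimension one; (b) for a fixed torsion-free sheaf $\mathscr{E}$, the slopes $\mu_H(\mathscr{F})$ of its subsheaves are bounded above and take values in a discrete subset of $\mathbb{Q}$; (c) there is a maximal destabilising subsheaf. Given these, existence follows by induction on the rank: pass to $\mathscr{E}/\mathscr{E}_{\max}$, which is torsion-free because $\mathscr{E}_{\max}$ is saturated.

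For (a), $c_1$ is a Weil divisor class and $H^{d-1}$ is a nef $\mathbb{Q}$-Cartier intersection, so $\deg_H(\mathscr{F}) = c_1(\det \mathscr{F})\cdot H^{d-1}$ is well defined and additive. For discreteness: a sufficiently divisible multiple $mH$ is Cartier, so $\deg_H$ takes values in $\frac{1}{m^{d-1}N}\mathbb{Z}$. Here $N$ clears the denominators coming from $c_1$ of Weil divisors; we can take $N=1$ by computing $c_1(\det\mathscr{F})\cdot (mH)^{d-1}$ on a resolution. Since the ranks are bounded by $\rk\mathscr{E}$, the slopes lie in a discrete set.

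For the upper bound, the step I expect to be the main obstacle, I would avoid degenerating $H$ and instead pass to a resolution or to a curve. Take a log resolution $\pi\colon \tilde Y\to Y$ and write $\pi^*H = A + E$ with $A$ an ample $\mathbb{Q}$-divisor and $E\ge 0$; this is possible by bigness (Kodaira's lemma). Subsheaves $\mathscr{F}\subset\mathscr{E}$ correspond to saturated subsheaves of $\pi^*\mathscr{E}/\mathrm{tors}$, and their $H$-degrees agree with the $\pi^*H$-degrees by the projection formula. Then $\det\mathscr{F}\hookrightarrow \bigwedge^{r}(\pi^*\mathscr{E})^{**}$, and the bound reduces to bounding $\deg_{\pi^*H}$ of line subsheaves of a fixed reflexive sheaf.

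A more direct route for this bound is to use that $H$ is semiample and $\pi^*H$ is nef. A line subsheaf $L\subset \mathscr{G}$ gives a nonzero section of $\mathscr{G}\otimes L^{-1}$. Fix an ample $A'$ with $\mathscr{G}(A')$ globally generated, so that $\mathscr{G}$ is a quotient of $\mathscr{O}(-A')^{\oplus M}$, and dually $\mathscr{G}\hookrightarrow \mathscr{O}(A'')^{\oplus M'}$ for some ample $A''$. Composing, $L$ maps nontrivially into $\mathscr{O}(A'')$, so $A''-L$ is effective. Nefness of $\pi^*H$ then gives $L\cdot(\pi^*H)^{d-1}\le A''\cdot (\pi^*H)^{d-1}$, a uniform bound. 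This only uses nefness, which is the key point.

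Existence of the maximal destabilising subsheaf then follows in the standard way. Among subsheaves of maximal slope, choose one of maximal rank and saturate it. Uniqueness comes from the usual argument that the sum of two such subsheaves has slope at least as large, which uses only additivity and the inequality $\deg(\mathscr{F}_1\cap\mathscr{F}_2)\le\ldots$ that follows from (a). Induction on rank finishes the proof, and uniqueness of the filtration follows formally from the slope inequalities via $\operatorname{Hom}$-vanishing between semistable sheaves of decreasing slopes. That vanishing holds here because a nonzero map $\mathscr{F}\to\mathscr{G}$ with $\mu_{\min}(\mathscr{F})>\mu_{\max}(\mathscr{G})$ contradicts the definitions applied to the image.
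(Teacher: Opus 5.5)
Your proof is correct, but it takes a genuinely different route from the paper's.

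The paper gives no argument of its own. Assuming $|mH|$ is free, it regards the complete-intersection class of general members of $|mH|$ as a movable curve class. It then invokes the Harder--Narasimhan theory for movable classes from \cite{GKPimrn}, which requires $Y$ to be $\mathbb{Q}$-factorial, and refers to \cite{LangerCrelle} for the general normal case.

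You instead re-run the classical argument of \cite[Thm.~1.6.7]{HL} and isolate what it actually needs:
\begin{enumerate}
\item additivity and discreteness of $\deg_H$, which are unproblematic because $H$ is $\mathbb{Q}$-Cartier, so $W\cdot H^{d-1}$ is defined for every Weil divisor $W$ on the normal variety $Y$;
\item the fact that saturating does not lower $\deg_H$;
\item a uniform upper bound on slopes of subsheaves, which your embedding $\det\mathscr{F}\hookrightarrow(\bigwedge^{r}\mathscr{E})^{**}\hookrightarrow\mathscr{O}(A'')^{\oplus M'}$ supplies, combined with nonnegativity of $H^{d-1}$ on effective divisors.
\end{enumerate}

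What your route buys is a self-contained proof that uses only nefness of $H$. Bigness, freeness of $|mH|$ and $\mathbb{Q}$-factoriality are all unnecessary. Note that freeness of $|mH|$ is presupposed by the paper's first sentence and does not follow from big and nef alone. The paper's route buys brevity, and it places the statement inside the GKP framework, which covers arbitrary movable classes rather than just $H^{d-1}$.

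A few small points to tidy:
\begin{itemize}
\item The Kodaira-lemma route is unfinished and not needed. In fact the resolution is not needed at all: a nonzero map from a reflexive rank-one sheaf to $\mathscr{O}_Y(A'')$ already gives an effective Weil divisor on $Y$.
\item The correspondence with subsheaves of $\pi^*\mathscr{E}/\mathrm{tors}$, and the equality of degrees, should be stated for saturated subsheaves.
\item Semiampleness of $H$ is neither assumed nor used, so drop it from the ``more direct route''.
\item Say explicitly that nefness is what makes saturation not decrease $\deg_H$. The reason is that the divisorial part of the support of $\mathscr{F}^{\mathrm{sat}}/\mathscr{F}$ is effective.
\item In the uniqueness step, the bound $\deg_H(\mathscr{F}_1\cap\mathscr{F}_2)\le\mu_{\max}\cdot\rk(\mathscr{F}_1\cap\mathscr{F}_2)$ comes from maximality of the slope, not from (a).
\end{itemize}
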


\begin{proof} If $|mH|$ is free, take a complete intersection curve $C = D_1 \cap \dots D_{n-1}$ of general elements $D_j \in |mH|$. The class of $C$ is movable in the sense of \cite[Def.~2.1]{GKPimrn} (and even big), and we may apply \cite[Cor.~2.27]{GKPimrn} to get the claimed statement, if $X$ is additionally assumed to be $\mathbb{Q}$-factorial. See \cite[Sect.~3]{LangerCrelle} for the general case, the case of ample polarisations being classical, see \cite[Thm.~1.6.7]{HL}
\end{proof}

We now establish a stability result that will play a key role in our discussion; it is a small upgrade of a fundamental result of Guenancia. 

\begin{theorem}[Semistability of the logarithmic tangent sheaf]
	\label{prop:guenanciass}
	Let $(X, D)$ be a log-canonical pair with $K_X+D$ big and nef.  Then the logarithmic tangent sheaf $\mathscr{T}_X(-\log D)$ is semistable with respect to $K_X+D$.
\end{theorem}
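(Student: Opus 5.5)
The plan is to reduce to Guenancia's theorem: for a log-canonical pair $(Z,\Delta)$ with $K_Z+\Delta$ \emph{ample}, the logarithmic tangent sheaf $\mathscr{T}_Z(-\log \Delta)$ is semistable with respect to $K_Z+\Delta$. His proof uses the existence of a (singular) Kähler–Einstein metric of negative curvature on the log-canonical model. The "small upgrade" is the passage from ample to big and nef.

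First, since $(X,D)$ is lc with $K_X+D$ big and nef, I would invoke the basepoint-free theorem for lc pairs (Ambro, Fujino). This shows that $K_X+D$ is semiample, so there is a birational morphism $q\colon (X,D)\to (X_\can, D_\can)$ to the log-canonical model. Here $D_\can=q_*D$, the pair $(X_\can,D_\can)$ is lc, $K_{X_\can}+D_\can$ is ample, and $K_X+D=q^*(K_{X_\can}+D_\can)$. Guenancia's theorem then gives semistability of $\mathscr{T}_{X_\can}(-\log D_\can)$ with respect to $H_\can:=K_{X_\can}+D_\can$.

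Second, I would transport semistability back to $X$. Let $\mathscr{F}\subset \mathscr{T}_X(-\log D)$ be a saturated subsheaf. Slopes on $X$ with respect to $q^*H_\can$ are computed by intersecting with $(q^*H_\can)^{d-1}$, so they depend only on $c_1$ away from a codimension-two subset or a $q$-exceptional locus. By the projection formula, $c_1(\mathscr{F})\cdot (q^*H_\can)^{d-1}=q_*c_1(\mathscr{F})\cdot H_\can^{d-1}$, and $q$-exceptional divisors contribute zero. Let $V\subset X_\can$ be the open set over which $q$ is an isomorphism; its complement has codimension $\geq 2$. On $V$, the sheaves $\mathscr{T}_X(-\log D)$ and $\mathscr{T}_{X_\can}(-\log D_\can)$ agree. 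Indeed, $q$ does not contract components of $D$ over $V$, and $D_\can=q_*D$. Hence $(q_*\mathscr{F})^{**}$ is a subsheaf of the reflexive sheaf $\mathscr{T}_{X_\can}(-\log D_\can)$ with $c_1\bigl((q_*\mathscr{F})^{**}\bigr)=q_*c_1(\mathscr{F})$. It follows that $\mu_{q^*H_\can}(\mathscr{F})=\mu_{H_\can}\bigl((q_*\mathscr{F})^{**}\bigr)\leq \mu_{H_\can}(\mathscr{T}_{X_\can}(-\log D_\can))=\mu_{q^*H_\can}(\mathscr{T}_X(-\log D))$. The same identity applied to the whole sheaf gives the last equality. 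This is the claimed semistability with respect to the big and nef class $K_X+D$, in the sense of \autoref{def:stability}. Existence of the relevant notions for degenerate polarisations is covered by \autoref{thm:HN}.

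The main obstacle is the input from Guenancia in exactly the needed generality. His result is stated for lc pairs with ample log-canonical class, possibly with the non-klt locus, and relies on the Kähler–Einstein metric being smooth and complete-enough on an open set. I would first check that his statement covers arbitrary lc (not just klt or dlt) pairs with reduced boundary. If it covers only klt or dlt pairs, I would instead pass to a dlt modification $(X',D')\to(X,D)$ with $K_{X'}+D'$ the pullback of $K_X+D$. I would then apply the argument on the log-canonical model of $(X',D')$, which coincides with $(X_\can,D_\can)$, and rerun the codimension-two comparison above. A secondary point is that $q$ extracts no divisor over $V$ and that the log tangent sheaves match there. This needs care only along components of $D$ that might be contracted by $q$, but such components are $q$-exceptional and thus invisible to the slope computation.
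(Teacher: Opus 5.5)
Your proposal is correct and follows essentially the same route as the paper. Both arguments get semiampleness of $K_X+D$ from the log-canonical basepoint-free theorem, apply Guenancia's theorem on the log-canonical model $(X_\can, D_\can)$, and transport slopes back using that $q$ is an isomorphism over an open set whose complement has codimension at least two. The paper computes degrees on complete intersection curves $C\cong C_\can$ lying in the isomorphism locus, while you use the projection formula and $(q_*\mathscr{F})^{**}$, which is the same computation in different form. Guenancia's result is already stated for lc pairs (as the paper cites it), so your dlt-modification fallback is not needed.
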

\begin{proof}
In \cite[Thm. A]{guenanciass} it is proven that if $K_X+D$ is ample, then $\mathscr{T}_X(-\log D)$ is polystable, and in particular semistable. We aim for a reduction to this result. 

If $K_X+D$ is only big and nef, then, as discussed in Section~\ref{subsubsect:model}, the canonical divisor $K_X + D =: H$ is semiample and induces a birational morphism to the log-canonical model $q:(X,D) \to (X_\can,D_\can)$, where $K_{X_\can}+D_\can =: H_\can$ is ample and  $K_X +D= q^*(K_{X_\can}+D_\can)$. Since $K_{X_\can}+D_\can$ is ample on $C_\can$, it is possible to find smooth complete intersection curves  curves $C \subset X$ for some multiple $m(K_X + D)$ and $C_\can \subset X_\can$ for $m(K_{X_\can}+D_\can)$ lying in the loci $V$ resp.~$q(V)$ on which $q: V \to q(V)$ is an isomorphism; in particular, $q$ induces an isomorphism $q:C\cong C_\can$. We will use these curves to compute the relevant degrees; without loss of generality $m=1$.

If $\mathscr{F}$ is a coherent subsheaf of $\mathscr{T}_X(-\log D)$, consider $\mathscr{F}_{|V}$. Since $q$ is an isomorphism over $V$, $\mathscr{F}_{|V}$ induces a coherent sheaf on $q(V)$, which extends to a coherent sheaf $\mathscr{F}_\can$ over $X_\can$ since $q(V)$ has complement of codimension at least two. Since  $\mathscr{F}_\can$ is a subsheaf of $\mathscr{T}_{X_\can}(-\log D_{\can})$ over $q(V)$, by reflexivity the same is true over the full $X_\can$.  Hence,  we can compute
\begin{align*}\mu_H(\mathscr{F})&=\deg_C(\mathscr{F})\\ &=\deg_{C_\can}\bigl(q_*(\mathscr{F})\bigr)\\
&\leq \mu_{H_\can}\bigl(\mathscr{T}_{X_\can}(-\log D_{\can})\bigr)\\
&=\mu_H\bigl(\mathscr{T}_{X}(-\log D)\bigr)\end{align*}
where the inequality comes from the semistability of $\mathscr{T}_{X_\can}(-\log D_{\can})$ proven by Guenancia, and the last equality again comes from the fact that $q$ is isomorphic along $C$.
\end{proof}
 
\subsection{Deligne extensions and log-Higgs sheaves}\label{sec:deligneext}

Let $(X,D)$ be a smooth quasi-projective snc pair and $\bV$ be a local system on the complement $X\setminus D$. We say that $\bV$ has \emph{unipotent local monodromy at infinity} if the monodromy of $\bV$ around the components of $D$ is unipotent. In this setup, by \cite[II, Sec. 5]{deligneeqdiff} (or \cite[Sec. 2]{schnell_can}) $\bV$ admits a canonical extension to $X$, the so-called \emph{Deligne extension}, which is a vector bundle on $X$ endowed with a connection with logarithmic poles along $D$. This is equivalent to $\bV$ having trivial parabolic structure in the sense of \cite[Condition 1.1 \& references given there]{mochizuki_asterisque}. 

\begin{rem}\label{rem:orbifoldDeligneExtension}
If $(X,D)$ is an orbifold pair and $\bV$ is an orbifold local system on $X\setminus D$, we can still define its orbifold Deligne extension to $X$ by defining it locally on \'etale charts on the associated stack,
 using the procedure known from the smooth case. 
 \end{rem}

\begin{prop}[Orbifold Chern classes of Deligne extensions]
\label{prop:deligneext}
Let $(X,D)$ be an orbifold pair and let $\bV$ be an orbifold local system on $X\setminus D$. Let $\bV_D$ be the orbifold Deligne extension of $\bV$ to $X$. Then \[\widehat{c}_i(\bV_D)=0 \in H^i(X,\bQ)\]  for all $i=1, \dots n$.
 \end{prop}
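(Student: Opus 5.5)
The plan is to reduce to the smooth snc case by passing to the stack, and there to compute Chern classes of a Deligne extension through its logarithmic connection and residues. Let $\mathcal{X}$ be the smooth Deligne--Mumford stack associated with $X$, and $\mathcal{D}$ the snc divisor on it given by \autoref{def:orbifold_pairs}. By \autoref{rem:orbifoldDeligneExtension}, $\bV_D$ corresponds to the Deligne extension on $\mathcal{X}$ of a local system on $\mathcal{X}\setminus\mathcal{D}$. Since $H^i(X,\bQ)\cong H^i(\mathcal{X},\bQ)$ by \autoref{rem:deligneext}, it suffices to prove that the rational Chern classes of this extension vanish on $\mathcal{X}$.

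We then pass to a smooth variety. By Kresch--Vistoli, a smooth quasi-projective DM stack with quasi-projective coarse space admits a finite flat surjection $p\colon W\to\mathcal{X}$ from a smooth quasi-projective variety $W$. Pullback along $p$ is injective on rational cohomology, since pushforward composed with pullback is multiplication by $\deg p$. Moreover, $p^{-1}(\mathcal{D})$ is locally the preimage of an snc divisor under a finite flat map. After a further alteration if necessary, for instance a log resolution $W'\to W$ of $p^{-1}(\mathcal{D})$ which is generically finite and so still injective on $H^*(-,\bQ)$, we may assume that the pullback divisor is snc.

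The delicate point is that pulling back a Deligne extension along a ramified map need not give a Deligne extension. If $\bV$ has unipotent monodromy, however, the canonical extension commutes with pullback: residues are nilpotent and get multiplied by the ramification indices, and they stay nilpotent. Unipotence holds after pulling back by ramified covers whenever the original monodromy at infinity is quasi-unipotent, which is our setting. In general one compares the pullback of the Deligne extension with the Deligne extension of the pulled-back system; they differ by lattices attached to eigenvalues of residues, and these contribute classes supported on the boundary. This step is the main obstacle. I would resolve it by working with the residue-eigenvalue formula rather than with pullback.

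On a smooth variety $Y$ with snc divisor $\Delta=\sum\Delta_j$, Esnault--Verdier give, for the Deligne extension of a local system with residue eigenvalues in $[0,1)$,
\[ \mathrm{ch}(\bV_D)=\sum \text{(polynomials in the } [\Delta_j] \text{ weighted by the residue eigenvalues)}. \]
When all residues are nilpotent, all these contributions are zero, and every Chern class vanishes in $H^*(Y,\bQ)$. An alternative route is Chern--Weil theory: a logarithmic connection with nilpotent residues gives Chern forms which are pushforwards of currents along the boundary strata with coefficients given by traces of residue products, and these traces vanish. On the stack, unipotence is checked on étale charts, so the residues of the orbifold Deligne extension are nilpotent. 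Applying the formula on $W'$, where the pullback is again a Deligne extension with nilpotent residues, gives $\widehat{c}_i(\bV_D)=0$ after pulling back to $W'$. Injectivity of pullback then gives the vanishing on $X$.
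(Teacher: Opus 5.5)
Your argument is correct, but it handles the orbifold step by a genuinely different route. The core input is the same as in the paper. On a smooth snc pair, the Chern classes of a bundle with an integrable logarithmic connection are expressed through the residues and boundary classes (Esnault--Viehweg, Appendix~B, via the Atiyah class), and they vanish when all residues are nilpotent.

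The paper then asserts that this Chern--Weil computation carries over verbatim to the orbifold, citing Satake. You avoid orbifold Chern--Weil theory altogether: you pull back along a generically finite $q\colon W'\to\mathcal{X}$ from a smooth variety with snc boundary (Kresch--Vistoli plus a log resolution), and you descend via the transfer $q_*q^*=\deg q$. Your route costs a global covering theorem and a transfer argument, but it reduces everything to the well-documented smooth statement. The paper's route is shorter, but it rests on an extension of the residue/Atiyah-class formalism to orbifolds that it does not spell out.

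Your write-up can be simplified in two places. First, the detour about non-unipotent monodromy and comparison lattices is unnecessary. The Deligne extension here presupposes unipotent local monodromy, and without that the statement is false anyway. Second, you need not identify $q^*\bV_D$ with the Deligne extension of $q^{-1}\bV$. The vanishing holds for \emph{any} bundle carrying a logarithmic connection with nilpotent residues.

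What you must verify is exactly that nilpotency. If $q^*\mathcal{D}_j=\sum_k m_{jk}\Delta'_k$, the residue of $q^*\nabla$ along $\Delta'_k$ is $\sum_j m_{jk}\,q^*\mathrm{Res}_{\mathcal{D}_j}$. This is a sum of nilpotent endomorphisms that commute by integrability, hence it is nilpotent.

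Finally, if the Kresch--Vistoli cover $W$ is not smooth, do not argue injectivity stepwise through $W$. Get it directly for the composite $W'\to X$, using that $X$ has quotient singularities and is therefore a rational homology manifold.
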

 \begin{proof}
 The result in the classical case of a smooth pair  $(X,D)$ follows from \cite[B3]{esnaultvielog}. Indeed, via Chern-Weil theory and by connecting the residue to the Atiyah class as in \cite[B1]{esnaultvielog}, it is possible to write the Chern classes of a sheaf with logarithmic connection in terms of symmetric polynomials evaluated on the residues of the logarithmic connection, see~\cite[Thm.~3, Thm.~5, Thm.~6]{Atiyah}. Moreover, for the Deligne extension all these terms are zero, since the assumption on unipotent local monodromy at infinity says that the residues are given by nilpotent matrices.
 The same argument applies in the orbifold case, the main technical point being that Chern-Weil theory still works in this setup, see e.g.~\cite{Satake}. See also the discussion around \cite[Theorem 17.5.21]{acgh2}.
 \end{proof}

While up to this point the discussion was restricted to local systems on orbifold pairs, we will need to discuss more general pairs, for which we need to define some of the basic notions. 
 
    \begin{definition}[Unipotent monodromy]\label{def:logHiggssing}
Let $(X,D)$ be a pair and $\bV$ be a local system on $U:=X\setminus D$.  We denote by $(X,D)_\reg = (X^\circ, D^\circ)$ the \emph{snc locus}  of the pair $(X,D)$.  We say that $\bV$ has \emph{unipotent local monodromies at infinity} if this holds for the restriction of $\bV$ to $U_{\reg} = U^\circ := X^\circ \setminus D^\circ \subset X^\circ$. 
\end{definition}

In the setup of the previous definition, we can  associate to the Deligne extension of $\bV|_{U^\circ}$ to $(X,D)_\reg$ a log-Higgs bundle $(E^\circ,\theta^\circ)$  on $(X,D)_\reg$ (see \cite[Sec. 13.2]{CM-SP}); here, $\theta^\circ:E^\circ\to E^\circ\otimes \Omega^1_{X^\circ}(\log D^\circ)$ is the log-Higgs field. By considering the reflexive extension to the whole of $X$, we obtain a so-called log-Higgs sheaf in the sense of the following definition, which goes back to Langer \cite{LangerCrelle}.

\begin{definition}[log-Higgs sheaves]
A \emph{log-Higgs sheaf} on a pair $(X,D)$ is a pair $(\mathscr{E}, \theta)$ consisting of a coherent sheaf $\mathscr{E}$ on $X$ and a morphism  
\[\theta: \mathscr{E} \otimes \mathscr{T}_X(- \log D) \to \mathscr{E}\] such that the morphism 
\[\bigwedge^2 \mathscr{T}_X(-\log D) \overset{[\cdot, \cdot ]}{\longrightarrow}  \mathscr{T}_X(-\log D) \overset{\theta}{\longrightarrow} \mathscr{E}nd(\mathscr{E}) \]
induced by $\theta$ and the Lie-bracket $[\cdot, \cdot ]$ vanishes. 
\end{definition}

A summary of the main properties of this extension of the concept of a Higgs bundle to the singular context is contained in \cite[Sect.~2.2]{mueller}, a detailed exposition and collection of technical background can be found in \cite[Sect.~2.3]{muellerthesis}. We will in particular discuss semistability of log-Higgs sheaves, which extends \autoref{def:stability} and use that a Higgs field on $\mathscr{E}$ induces natural Higgs fields on symmetric and alternating tensor powers of $\mathscr{E}$. 

\subsection{Length}
We now introduce the length of the log-Higgs sheaf $(\mathscr E,\theta)$. This notion generalises the one introduced in the smooth setting in \autoref{def:lenght}, and it is one of the main numerical invariants appearing in our main theorem, \autoref{thm:intro1}. As above, let $(X,D)$ be a pair and $\mathbb{V}$ a $\mathbb{C}$-irreducible local system on $U=X\setminus D$ with unipotent local monodromies at infinity, let $(E^\circ,\theta^\circ)$ be the associated log-Higgs bundle on $(X,D)_\reg$ and $(\mathscr{E}, \theta)$ its reflexive extension to $X$, i.e., $(\mathscr{E}=\mathscr{E}^{1,0}\oplus \mathscr{E}^{0,1}, \theta)$  is the log-Higgs sheaf associated with $\mathbb{V}$.

For an integer $l>0$, $(E^\circ, \theta^\circ)$ induces on $(X,D)_{\reg}$ a Higgs bundle $\bigwedge^l(E^\circ,\theta^\circ)$ as in \autoref{subsect:prelim_VHS}. In particular, in the case where $l=h:=\rk(E^{\circ\,1,0})$, we get on $(X,D)_{\reg}$ the associated iterated Higgs fields 
\begin{gather*}
 \theta^{\circ, (m)}=\theta^\circ_{l-m+1,m-1} \circ \cdots \circ \theta^\circ_{l,0} : \; \det\bigl((E^{\circ})^{1,0}\bigr) \to (E^{\circ})^ {l-m,m} \otimes \Sym^m\bigl(\Omega^1_{X^\circ}(\log D^\circ)\bigr)
\end{gather*}
or, equivalently, their dual morphisms
\begin{gather*}
 \det\bigl((E^{\circ})^{1,0}\bigr) \otimes \Sym^m \bigl( T_{X^\circ}(-\log D^{\circ})\bigr) \longrightarrow (E^{\circ})^ {l-m,m}. 
\end{gather*}
Taking reflexive extension to $X$ we get morphisms
\begin{equation*}
\theta^{(m)}: \det(\mathscr{E}^{1,0})[\otimes] \Sym^{[m]}\bigl(\mathscr{T}_X(-\log D)\bigr) \, \longrightarrow \, \bigwedge^{[h-m]}\mathscr{E}^{1,0}\,[\otimes]\, \bigwedge^{[m]}\mathscr{E}^{0,1}
\end{equation*}
We call $\theta^{(m)}$ the \emph{$m$-th iterated Higgs field} associated with $(\mathscr E,\theta)$. 

\begin{definition}[Length of a log-Higgs sheaf]\label{def:lenght2}
The \emph{length of the log-Higgs sheaf} $(\mathscr{E}, \theta)$  is defined to be
\begin{equation*}
\zeta(\mathscr{E}, \theta):=\max\bigl\{m\in \bN  \mid  \theta^{(m)} \not\equiv 0 \bigr\}.
\end{equation*}
\end{definition}
Equivalently, denoting by $\langle\det(\mathscr{E}^{1,0})\rangle
\subset
\bigwedge^{[h]}(\mathscr{E},\theta)$ the Higgs subsheaf generated by $\det(\mathscr{E}^{1,0})$, its 
$(h-m,m)$-piece coincides with $\langle\det(\mathscr{E}^{1,0})\rangle^{h-m,m}=\Im\bigl(\theta^{(m)}\bigr)$ 
and hence we have \begin{equation}\label{eq:length_equiv}
    \zeta(\mathscr{E}, \theta) = \max\bigl\{m\in \bN  \mid   \langle \det(\mathscr{E}^{1,0}) \rangle^{h-m,m} \neq \{0\}\bigr\}.
\end{equation}

	\subsection{Miyaoka-Yau (in)equality for orbifold log surfaces} 
	The following is the appropriate Miya\-oka-\-Yau inequality and uniformisation result in our setup; it is a consequence of the main result of \cite{MYlogsurfaces}, discussed also in \cite{Kobayashi_survey}; these results in fact apply to more general log-canonical surface pairs.
		 \begin{theorem}[Miyaoka-Yau for orbifold log surfaces, {\cite[Prop.~8.1]{CMS}}]
 		\label{thm:logMY}
  Let $(X, D)$ be projective orbifold surface pair such that $K_X+D$ is big and ample on $X\setminus D$. Then, identifying $H^4(X,\bQ)$ with $\mathbb{Q}$, the following inequality holds 
\begin{equation}\label{eq:MY}		3 \cdot \widehat{c}_2(\mathcal{T_X}(-\log D))- \widehat{c}_1(\mathcal{T_X}(-\log D))^2 \geq 0.
\end{equation}
Moreover, if equality holds in \eqref{eq:MY}, then $X\setminus D$ is a ball quotient; i.e.,  there is a cofinite
lattice $\Gamma\in \PU(1,2)$ such that $X\setminus D$ is biregular to $\bB^2/\Gamma$.
	\end{theorem}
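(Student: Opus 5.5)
The plan is to reduce the statement to the Kobayashi--Nakamura--Sakai theory of log-canonical surfaces, as developed in \cite{MYlogsurfaces} and surveyed in \cite{Kobayashi_survey}, by way of a complete K\"ahler--Einstein metric on $X\setminus D$. The first step is to put ourselves in the setting of those results. Since $(X,D)$ is an orbifold pair, $D$ is snc on the smooth Deligne--Mumford stack $\mathcal{X}$, so $\mathcal{T}_X(-\log D)$ is an orbifold vector bundle in the sense of \autoref{rem:deligneext}. Its orbifold Chern classes $\widehat{c}_i$ are the Chern classes of the honest logarithmic tangent bundle $T_{\mathcal{X}}(-\log \mathcal{D})$ on the stack, and by \autoref{rem:deligneext} they agree with the $\mathbb{Q}$-multilinear forms of \autoref{subsect:stability}. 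The hypothesis that $K_X+D$ is big and ample on $X\setminus D$ lets us contract, via the basepoint-free theorem of \autoref{subsubsect:model}, only curves contained in $D$ on which $K_X+D$ is trivial. This yields the log-canonical model $(X_\can,D_\can)$ with $X\setminus D\cong X_\can\setminus D_\can$, which is exactly the setting treated in \cite{MYlogsurfaces}.

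Next I would invoke the existence of a complete K\"ahler--Einstein metric $\omega$ with negative Ricci curvature on $X\setminus D$. Near the orbifold points this metric is an orbifold metric, and near $D$ it has Poincar\'e-type (cusp) growth, which by \cite{MYlogsurfaces} is controlled also near the lc points at which the strata of $D$ are contracted. The inequality \eqref{eq:MY} then follows from Chern--Weil theory. One writes $3c_2-c_1^2$ pointwise as a nonnegative multiple of $|R^\circ|^2\,\omega^2$, where $R^\circ$ is the part of the curvature orthogonal to constant holomorphic sectional curvature. The growth estimates on $\omega$ ensure that the integrals over $X\setminus D$ converge and compute the orbifold logarithmic Chern numbers, with no boundary contributions; this is the same mechanism as in \autoref{prop:deligneext}, and orbifold Chern--Weil theory is available by \cite{Satake}.

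In the equality case, the integrand vanishes identically, so $\omega$ has constant negative holomorphic sectional curvature. Since $\omega$ is complete, the universal cover of $X\setminus D$ is therefore biholomorphically isometric to $\bB^2$, and the deck group is a discrete $\Gamma\subset\PU(1,2)$. The finite volume of $\omega$, which is proportional to $(K_X+D)^2>0$ because $K_X+D$ is big, shows that $\Gamma$ is a lattice. Biregularity of $X\setminus D\cong\bB^2/\Gamma$ then follows from the Baily--Borel/Siu--Yau compactification, because a holomorphic identification between quasi-projective varieties of this kind is algebraic.

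I expect the main obstacle to be the asymptotic analysis of the K\"ahler--Einstein metric near $D$. Completeness and the absence of boundary terms in the Chern--Weil computation are precisely the points where \cite{Yau93} is problematic in higher dimension. In the surface case I would not redo this analysis but cite it from \cite{MYlogsurfaces}, and in the formulation used here from \cite[Prop.~8.1]{CMS}, after checking that our orbifold Chern classes coincide with theirs.
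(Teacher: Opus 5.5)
Your overall route is the paper's. The paper does not reprove the statement: it quotes it from \cite[Prop.~8.1]{CMS}, i.e.\ from the Kobayashi--Nakamura--Sakai theory \cite{MYlogsurfaces}, whose K\"ahler--Einstein/Chern--Weil mechanism you sketch, and adds only a separate argument for algebraicity. However, two of your steps are wrong or unjustified as written.

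\emph{The log canonical model does not preserve $X\setminus D$.} The claim $X\setminus D\cong X_\can\setminus D_\can$ is false in general, and false precisely for the pairs the theorem is about. $K_X+D$ can be numerically trivial on an entire connected component of $D$. For instance, the Mumford compactification of a ball quotient by a neat lattice has as boundary disjoint elliptic curves $D_i$ with $(K_X+D)\cdot D_i=(K_X+D_i)\cdot D_i=0$. The log canonical morphism contracts each such component to a non-klt point that does not lie on $D_\can=(g_\can)_*D$. In this example $X_\can$ is the Baily--Borel compactification and $D_\can=\emptyset$, as the paper stresses in \autoref{subsect:canmodfactor}. So you cannot transport invariants or conclusions along your identification. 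The correct bookkeeping uses the fact that KNS treat non-klt points as cusps with infinite local group, so these points contribute nothing to their orbifold Euler number. This gives $e_{\mathrm{orb}}(X_\can,D_\can)=e_{\mathrm{orb}}(X\setminus D)=\widehat c_2(\mathcal{T}_X(-\log D))$. By crepancy, $\widehat c_1(\mathcal{T}_X(-\log D))^2=(K_X+D)^2=(K_{X_\can}+D_\can)^2$. In the equality case one obtains a uniformisation of $X_\can\setminus(D_\can\cup\{\text{non-klt points}\})=X\setminus D$. This comparison is the actual content of your closing remark about ``checking that the Chern classes coincide'', and it has to be carried out. Note also that the basepoint-free theorem needs $K_X+D$ nef, which you use tacitly; this holds in the paper's applications, cf.\ \autoref{intro:fact}(ii).

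\emph{Algebraicity needs an extension theorem.} Your sentence that a holomorphic identification between quasi-projective varieties ``of this kind'' is algebraic is not an argument. Biholomorphisms of quasi-projective varieties need not be algebraic; for example $(z,w)\mapsto(z,w+e^z)$ on $\bC^2$ is not. As the paper remarks, \cite{MYlogsurfaces} do not address this point. What the paper supplies is the following. The quotient $\bB^2/\Gamma$ is hyperbolically embedded in its Satake/Baily--Borel compactification. By the Kiernan--Kobayashi--Kwack ``$K^3$'' extension theorem \cite[p.~279]{KobayashiHyperbolic}, the biholomorphism $X\setminus D\to\bB^2/\Gamma$ therefore extends across the boundary, working on orbifold charts where $D$ is snc, to a holomorphic map $X\to\overline{\bB^2/\Gamma}$ of projective varieties. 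That map is algebraic by GAGA. Finally, $X\setminus D$ may have quotient singularities, so ``universal cover'' and ``deck group'' should be read as the orbifold universal cover and the orbifold fundamental group; in particular $\Gamma$ may have torsion.
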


	In \cite{MYlogsurfaces} and \cite[Thm.~2]{Kobayashi_survey} the algebraicity of the isomorphism $X\setminus D \cong \bB^2/\Gamma$ is in fact not explicitly discussed. This, however, is classical and follows from the fact that $\bB^2/\Gamma$ is hyperbolically embeddable in its Satake/Baily-Borel compactification combined with the $K^3$-theorem in the theory of complex hyperbolicity, see \cite[p.~279]{KobayashiHyperbolic}.

\section{Factorization of period maps via MMP}
\label{sec:factorization}

\subsection{Setup}\label{subsec:setupfactorization}
In this section we will work in the following setup. 
Let $U$ be an irreducible non-singular projective variety of dimension $d\geq 2$ or a quasi-projective surface and $f : A\to U$ a family of principally polarized Abelian varieties of dimension $g$. Denote by $\varphi= \varphi_f :U \to \cA_g$ the associated period map. Let $Y$ be a smooth projective compactification of $U$ with simple normal crossing divisor $D:=Y\setminus U$, so that  $U=Y$ if $U$ is projective. We assume

  \begin{enumerate}
  \item $R^1f_*\underline{\bC}$ has unipotent local monodromies at infinity;
  \item $\varphi:U \to \cA_g$ is generically finite. 
  \end{enumerate} 
 
We note that in the quasi-projective surface case this setup slightly differs from the global one stated in \autoref{subsec:setting}: in this subsection we are not (yet) assuming the non-extendability condition appearing there, which will be added in a second step in \autoref{subsect:nonextendable} below. 

\subsection{Bigness of the log-canonical bundle and existence of log-minimal model}

First of all we notice  that in this setup, the log-canonical bundle of $Y$ is big. 

\begin{theorem}[{\cite[Thm.~0.1(ii)]{zuobig}}]
	Let $Y$ be a smooth projective variety and $D\subset Y$ an snc divisor. Consider a polarised (integral) variation of Hodge structures on $U:=Y\setminus D$ with unipotent local monodromies at infinity. If the period map is locally injective at some point of $U$, then the log-canonical divisor $K_Y + D$ is big; i.e., $(X, D)$ is of log general type. 
\end{theorem}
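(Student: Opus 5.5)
The proof proceeds by showing that a large multiple of $K_Y+D$ has a nonzero global section twisted by something of Kodaira dimension $d$. The key input is the Griffiths curvature computation for the Hodge bundle, together with the Viehweg--Zuo sheaf construction.

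First, we reduce to the Higgs bundle picture. Let $(E=E^{1,0}\oplus E^{0,1},\theta)$ be the logarithmic Higgs bundle attached to the Deligne extension. Unipotent monodromy guarantees that this extension exists, that the Hodge subbundles extend as subbundles, and that $\theta$ takes values in $\Omega^1_Y(\log D)$, as recalled in \autoref{subsect:prelim_VHS}. For weight $\ge 2$ the Hodge filtration has more pieces, but the argument only uses the top piece and its kernels. Local injectivity of the period map at a point $u\in U$ means that the differential of the period map is injective at $u$. Equivalently, the map $T_{Y,u}\to \bigoplus_p \Hom(E^{p,q},E^{p-1,q+1})_u$ induced by $\theta$ is injective.

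Second, we construct positive subsheaves. Let $N$ be the kernel of $\theta$ restricted to $E^{w,0}$, or more generally the kernels of the iterated Higgs fields. By Griffiths' curvature formula, the dual of such a kernel is weakly positive: kernels of Higgs fields have seminegative curvature. This is Zuo's negativity theorem, extended to the log setting via Kawamata's and Fujita's results on semipositivity of the extended Hodge bundle $E^{w,0}$. Iterating $\theta$ gives maps
\[
E^{w,0}\to E^{w-1,1}\otimes\Omega^1_Y(\log D)\to\cdots\to E^{w-m,m}\otimes \Sym^m\Omega^1_Y(\log D).
\]
Dualising and using the weak positivity of the relevant quotient pieces, we obtain a big line bundle $\mathscr{A}$, for instance $\det(E^{w,0})$ after a suitable modification, together with a nonzero morphism
\[
\mathscr{A}\longrightarrow \Sym^m\Omega^1_Y(\log D)
\]
for some $m$. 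Bigness of $\mathscr{A}$ comes from local injectivity: the Griffiths curvature form of the Hodge metric on $\det E^{w,0}$ is strictly positive at $u$, because the period map is immersive there. Hence this line bundle is nef (by Fujita/Kawamata) and strictly positive at one point, and therefore big.

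Third, we conclude using the Campana--Păun theorem. If $K_Y+D$ were not big, the generic semipositivity and bigness criterion of Campana--Păun (or Viehweg--Zuo's original argument) would forbid a big line subsheaf of $\Sym^m\Omega^1_Y(\log D)$ unless $K_Y+D$ is big. Concretely, a big subsheaf of a log symmetric differential sheaf forces $\kappa(K_Y+D)=d$; this is the log version of the Campana--Păun theorem on foliations with big slopes.

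The main obstacle is the second step: producing a genuinely big, rather than merely nef, line subsheaf inside the symmetric powers. This needs the log-version positivity of $\det E^{w,0}$ near $D$, which is where unipotency enters to make the Hodge metric have at most logarithmic growth, plus strict positivity at $u$.
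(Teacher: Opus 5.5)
The paper gives no proof of this statement; it quotes it from Zuo. Zuo's argument predates Campana--P\u{a}un and obtains the bigness of $K_Y+D$ from the Hodge-theoretic negativity itself. Your route (negativity of kernels $\Rightarrow$ a Viehweg--Zuo big line subsheaf of some $\Sym^{M}\Omega^1_Y(\log D)$ $\Rightarrow$ Campana--P\u{a}un) is a legitimate modern alternative. In weight one, the only case the paper needs, it goes through once Step 2 is corrected as below. The price is that the last step is a deep theorem in its own right. Your parenthetical ``or Viehweg--Zuo's original argument'' is wrong: Viehweg--Zuo only produce the big subsheaf, and deducing log general type from such a subsheaf in arbitrary dimension is precisely the content of Campana--P\u{a}un.

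\textbf{The gap: weight $w\ge 2$.} You claim the argument ``only uses the top piece'' and that immersivity at $u$ makes the Hodge curvature of $\det E^{w,0}$ strictly positive at $u$. Immersivity only says that for each $v\neq 0$ \emph{some} $\theta_v|_{E^{p,w-p}}$ is nonzero, not necessarily the one on $E^{w,0}$.

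Here is a counterexample. Take two modular curves $C_1,C_2$ of level $\ge 3$, with $\mathbb{H}_i$ the weight-one VHS of the universal elliptic curve. On $U=C_1\times C_2$, the VHS $\mathbb{V}=p_1^*\Sym^3\mathbb{H}_1\oplus p_2^*\mathbb{H}_2(-1)$ (Tate twist) is a polarised integral weight-three VHS with unipotent monodromies and immersive period map. However $E^{3,0}=p_1^*(E^{1,0}_1)^{\otimes 3}$, so $\det E^{3,0}$ has self-intersection $0$. Moreover, every iterated Higgs field starting from $E^{3,0}$ factors through $\Sym^m p_1^*\mathscr{T}_{\bar C_1}(-\log \partial C_1)$. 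Your construction therefore only yields non-big subsheaves pulled back from $\bar C_1$.

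The fix is to replace $\det E^{w,0}$ by the Griffiths line bundle $\mathscr{A}=\bigotimes_p \det F^p$. This is the rank-one top Hodge piece of the auxiliary VHS $\bigotimes_p\bigwedge^{\dim F^p}\mathbb{V}$. By telescoping Griffiths' curvature formula, its Hodge curvature in direction $v$ is $\sum_p\|\theta_v|_{E^{p,w-p}}\|^2$, hence positive wherever the period map is immersive. You then run your argument for that auxiliary VHS. In weight one, $\det E^{1,0}$ already is this bundle.

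\textbf{Step 2 needs correcting in three places.}
\begin{enumerate}
\item The kernel to which Zuo's negativity applies is $K=\ker\bigl(\theta|_{E^{w-m,m}}\bigr)$, where $m$ is maximal with $\theta^{(m)}|_{\mathscr{A}}\neq 0$. That maximality is exactly what forces the image of $\theta^{(m)}$ into $K$. It is not a kernel on $E^{w,0}$, nor a kernel of the iterated fields.
\item Dualising gives a nonzero map $\mathscr{A}\otimes K^\vee\to \Sym^m\Omega^1_Y(\log D)$, not a map out of $\mathscr{A}$ itself. You still have to combine bigness of $\mathscr{A}$ with Viehweg weak positivity of $K^\vee$, passing to high symmetric powers, to extract an ample line subsheaf of some $\Sym^{mN}\Omega^1_Y(\log D)$.
\item ``Nef and positive at one point, hence big'' needs Mumford goodness of the Hodge metric, or Boucksom's volume inequality, because the metric degenerates along $D$.
\end{enumerate}
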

\begin{rem}
Stronger results actually hold under less restrictive assumptions and are closely related to hyperbolicity questions for moduli spaces, see for example \cite{PopaSchnellhyp, Denghyp}, and in particular \cite{brunebarbe}.
\end{rem}

In order to be able to reach a situation where the log-canonical divisor is additionally nef (so that it defines a reasonable polarisation with respect to which one may consider semistability of sheaves as discussed in \autoref{subsect:stability} above), we run a minimal model program (with scaling), which terminates with a log-minimal model of $(Y, D)$. This is the first place at which a restriction on the dimension comes in when considering non-empty boundary divisors $D$.

\begin{prop}[Existence and singularities of minimal models] \label{prop:mmp}
Let $Y$ be a smooth projective variety and $D \subset Y$ a (reduced) simple normal crossing divisor such that $K_Y + D$ is big. If either $D = \emptyset$ or $\dim Y =2$, there exists a log-minimal model $(Y_{\min}, D_{\min})$ of $Y$; i.e., there exists a birational rational map $\psi: Y \dasharrow Y_{\min}$ that does not extract a divisor, $(Y_{\min}, D_{\min})$ is dlt, and $K_{Y_{\min}} + D_{\min}$ is nef and big. If $\dim Y =2$, the map $\psi$ is a (birational) morphism. 
\end{prop}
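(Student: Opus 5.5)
The plan is to invoke the standard existence results of the minimal model program and to treat the two cases separately.

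\emph{Case $D=\emptyset$.} Here $Y$ is a smooth projective variety with $K_Y$ big, i.e.\ of general type. By Birkar--Cascini--Hacon--McKernan, a $K_Y$-MMP with scaling of an ample divisor terminates for klt pairs of log general type. I will apply this to the pair $(Y,0)$. The result is a birational contraction $\psi\colon Y\dasharrow Y_{\min}$, obtained as a composition of divisorial contractions and flips, so it extracts no divisor. The target $Y_{\min}$ has terminal (in particular klt, hence trivially dlt) singularities, and $K_{Y_{\min}}$ is nef. Bigness is preserved because $\psi$ is a birational contraction: $H^0(mK_Y)\cong H^0(mK_{Y_{\min}})$ for all $m$, so the Kodaira dimension is unchanged.

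\emph{Case $\dim Y=2$.} Here there are no flips, and I will run the classical log MMP for surface pairs. Since $(Y,D)$ is snc with reduced boundary, it is dlt. Suppose $K_Y+D$ is not nef. By the cone theorem for dlt surface pairs (Fujino's log MMP for lc surfaces, or \cite[Ch.~3]{kollarmori}), there is a $(K_Y+D)$-negative extremal ray. Its contraction is either
\begin{itemize}
\item birational, contracting a single curve $C$ to a point, or
\item of fibre type.
\end{itemize}
Fibre type contractions are excluded by bigness, because $K_Y+D$ would then be negative on a covering family of curves. A divisorial contraction of a surface is a morphism, and the new pair is again dlt by the negativity lemma (discrepancies increase). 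The Picard number drops by one at each step, so the process terminates. The composite $\psi\colon Y\to Y_{\min}$ is a birational morphism with $K_{Y_{\min}}+D_{\min}$ nef and big, where $D_{\min}=\psi_*D$. Bigness is again preserved because pushforward along a birational contraction keeps the sections of multiples of $K+D$.

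The main obstacle I expect is checking that the singularity class is preserved, that is, that each step keeps the pair dlt and not merely lc. In dimension two I will use \cite[Cor.~3.44]{kollarmori}: steps of a $(K+\Delta)$-MMP preserve dlt. I also need that $\psi$ does not extract a divisor, which is automatic since MMP steps only contract divisors or are isomorphisms in codimension one. For $D=\emptyset$ in higher dimension, the essential input is termination with scaling for varieties of general type. I will cite this directly rather than reprove it.
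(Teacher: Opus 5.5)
Your proposal is correct and follows essentially the same route as the paper. The paper simply cites \cite[Cor.~1.1.1]{BCHM} for the case $D=\emptyset$, the log MMP for surfaces of \cite{KollarKovacs} and \cite{Fujino_logsurfaces} for $\dim Y=2$, and \cite[Cor.~3.44]{kollarmori} for preservation of dlt singularities; your sketch of the surface case (cone theorem, fibre-type contractions excluded by bigness, termination because the Picard number drops, bigness preserved under pushforward along a birational contraction) just unpacks what those references provide.
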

\begin{proof}
For the case of log-surfaces, this is proven in \cite{KollarKovacs} and also in \cite{Fujino_logsurfaces}, the statement concerning the singularities being known in arbitrary dimension for a long time, see \cite[Cor.~3.44]{kollarmori}. The existence of minimal models in the higher-dimensional case without boundary is contained in  \cite[Cor.~1.1.1]{BCHM}. See also \cite{Kawamata2024} for a thorough discussion of both the necessary results concerning termination of flips and the singularity categories of minimal models (of pairs). 
\end{proof}
\begin{rem}
Note that if $D = \emptyset$, the above theorem in particular states that $X_{\min}$ has klt singularities. In fact, although we will not need this fact, we note that the singularities in this case are even better, namely terminal, cf.~\autoref{def:MMPsings}.
\end{rem}

\subsection{Factorisation of period maps via minimal models}\label{subsect:minmodlfactor}
The first statement we want to show is that the period map we are starting with factors through the log-minimal model of $Y$.

\begin{prop}[Factorisation of period maps via minimal models]
	\label{prop:factorminimal}
In the setup of \autoref{subsec:setupfactorization}, let $g_{\min}: Y \dasharrow Y_{\min}$ be the (rational) map to a log-minimal model\footnote{As discussed in \autoref{prop:mmp}}, let $D_{\min}:=(g_{\min})_*D$ be the induced (reduced) divisor, and $U_{\min}:=Y_{\min}\setminus D_{\min}$. Then, the period map rationally factors through a regular map $\varphi_{\min}:U_{\min}\ra \cA_g$. More precisely, $\varphi$ is a regular extension of the rational map $\varphi_{\min} \circ (g_{\min})|_U$, so that
   \[\varphi = \varphi_{\min} \circ (g_{\min})|_U.\]
   Furthermore, $\varphi$ continues to be generically finite, and the induced VHS / the induced family of principally polarised Abelian varieties on $U_{\min}$ has unipotent local monodromies at infinity. 
\end{prop}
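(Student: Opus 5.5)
The approach is to analyse the birational map $g_{\min}$ through its exceptional loci and to use the extension properties of period maps, in particular Borel's extension theorem and its consequences. Since $g_{\min}$ does not extract divisors, its inverse $g_{\min}^{-1}: Y_{\min} \dasharrow Y$ is defined outside a closed subset $Z \subset Y_{\min}$ of codimension at least two. Take a common resolution $Y \xleftarrow{p} W \xrightarrow{q} Y_{\min}$ with $W$ smooth, $p$ an isomorphism over the locus where $g_{\min}$ is a local isomorphism, and $p^{-1}(D)\cup \operatorname{Exc}(p)$ simple normal crossing. The composition $\varphi\circ p$ is defined on $W^\circ := W\setminus p^{-1}(D)$. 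The task is to show that it descends along $q$ to $U_{\min}$.

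The first step is to make the divisorial part precise. Every $q$-exceptional divisor is also $p$-exceptional or maps to an exceptional divisor of $g_{\min}$. Since the MMP only contracts $(K+D)$-negative loci, every prime divisor of $Y$ contracted by $g_{\min}$ is either a component of $D$ or lies in $U$. In dimension two, $\psi$ is a morphism contracting curves $C$ with $(K_Y+D)\cdot C<0$. For a curve $C\not\subset D$ with $C\cap U\neq\emptyset$, the key claim is that $\varphi$ is constant on $C\cap U$. For this I will use that the pull-back $\varphi^*\lambda$ of the Hodge line bundle is nef. The plan is to use the Viehweg–Zuo style positivity of $\det(E^{1,0})$ together with Arakelov / Griffiths–Yukawa bounds: an unbounded non-constant $\varphi|_{C\cap U}$ forces $\deg \Omega^1_C(\log (C\cap D)) > 0$, and $C$ is then not $(K_Y+D)$-negative. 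More cleanly, I would first try the contrapositive via the rigidity lemma: a curve on which the log canonical class is negative is rational with at most one boundary point in its normalisation (in dimension two, an extremal log contraction has this form), so $C\cap U$ is simply connected or $\mathbb{C}$. Such a curve admits no non-constant map to $\mathcal A_g$, since $\mathbb{C}$ and $\mathbb{P}^1$ map constantly to the hyperbolic quotient $\mathbb{H}_g/\Gamma$ (Borel; the level structure makes $\mathcal A_g$ fine and its universal cover bounded). For the case $D=\emptyset$ in arbitrary dimension, flips and divisorial contractions are covered by rational curves (Mori's bend-and-break / the cone theorem: extremal rays are spanned by rational curves), hence $\varphi$ is constant on the fibres of each step. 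Handling flips in the same way via the common resolution is enough.

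Given constancy on the fibres of $q$ over $U_{\min}$ (the fibres are connected by Zariski's main theorem because $Y_{\min}$ is normal), $\varphi\circ p$ descends to a continuous, hence holomorphic (normality), map $\varphi_{\min}: U_{\min}\to\mathcal A_g$. It is algebraic by Borel's extension theorem, or because its graph is the image of that of $\varphi\circ p$. Generic finiteness is preserved because $g_{\min}$ is birational. For unipotent monodromy: the boundary components of $D_{\min}$ are strict transforms of components of $D$, and a small loop around such a component at a general point comes from a loop around the corresponding component of $D$ where $g_{\min}$ is an isomorphism, so its monodromy is unchanged. I expect the main obstacle to be the step showing that every curve contracted in $U$ meets $D$ in at most one point (equivalently, is non-hyperbolic). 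For surfaces I will verify this by the adjunction computation $(K_Y+D)\cdot C<0$, $C^2<0$, which gives $2p_a(C)-2+C\cdot D<0$.
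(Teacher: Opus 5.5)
There is a genuine gap in the surface case. Your descent step (constancy on the fibres of $q$ over $U_{\min}$, then normality) can only define $\varphi_{\min}$ at points $y\in U_{\min}$ whose fibre $q^{-1}(y)$ lies in $W^\circ$. However, the log MMP may contract a whole connected component $C$ of $D$ to a point $y\notin D_{\min}=(g_{\min})_*D$, i.e.\ $y\in U_{\min}$. For instance, a connected component $C\cong\mathbb{P}^1$ of $D$ with $C^2<0$ has $(K_Y+D)\cdot C=-2$, so it is necessarily contracted. Non-extendability is not assumed at this stage, so this case is not excluded. Over such $y$ the map $\varphi\circ p$ is not defined at \emph{any} point of the fibre, so there is nothing to descend. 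You note that contracted divisors may be components of $D$, but you then only treat curves $C\not\subset D$. The point you flag as the main obstacle (contracted curves meeting $D$ in at most one point) is not where the difficulty lies: a contracted curve meeting $D$ is sent into $D_{\min}$ at that step anyway.

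The missing ingredient is an extension argument across $y$, which is the paper's case (2). The point $y$ is klt, hence a quotient singularity, so a punctured neighbourhood $V\setminus\{y\}$ has finite fundamental group. Griffiths' extension theorem then extends the period map over $y$ with values in $\mathcal A_g$. Equivalently, on the smooth local uniformising chart the punctured ball is simply connected, so the period map lifts to the bounded domain $\mathbb{H}_g$ and extends by Hartogs. Your general appeal to Borel's extension theorem does not supply this. Borel only extends into the Baily--Borel compactification, and is formulated for smooth snc pairs; it is the finiteness of local monodromy that makes the extension land in $\mathcal A_g$. Within your own framework you could instead argue as follows. The monodromy around $C$ is unipotent by hypothesis and of finite order, hence trivial. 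So the variation extends over $C\cong\mathbb{P}^1$, on which $\varphi$ is then constant.

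There are also two smaller points.

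\emph{The case $D=\emptyset$.} Knowing that each extremal ray is spanned by a rational curve, or even that exceptional loci are covered by rational curves, does not by itself give constancy of $\varphi$ on fibres. For example, a fibre ruled over a curve of positive genus would not be excluded. You can close this with the nef class you already mention. We have $\varphi^*\lambda\cdot\ell=0$ for the rational curve $\ell$ spanning the ray. Every contracted curve is numerically proportional to $\ell$, so $\varphi^*\lambda$ is zero on all of them. Since $\lambda$ is ample on the Baily--Borel compactification, $\varphi$ contracts each such curve. Alternatively, use rational chain connectedness of fibres (Hacon--McKernan), or Takayama's results on fundamental groups as the paper does; the latter also handle flips via a common resolution.

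\emph{The adjunction estimate.} Your bound $2p_a(C)-2+C\cdot D<0$ is only valid while $Y$ is smooth along $C$. After a boundary component has been contracted, $Y$ has quotient singularities. You then need adjunction with the different, or the lemma of Koll\'ar--Kov\'acs used in the paper.
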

\begin{proof}
Recall that the minimal model is reached after a finite number of steps, through varieties $X^{(k)}$, $k =1, \dots, N$. In the surface case, each of the steps is a contraction \emph{morphism}, whereas in the higher-dimensional case, there are two types of steps, divisorial contractions and \emph{rational} flips that follow small contractions. 

We deal with the case $D=\emptyset$ first.  If $g:Y^{(k)} \to Y^{(k+1)}$ is a divisorial contraction, then the induced map $g_*: \pi_1\bigl(Y^{(k)} \bigr) \ra \pi_1\bigl(Y^{(k+1)}\bigr)$ is an isomorphism by \cite[Thm.~1.2]{takayama}. This implies that any local system on $Y^{(k)}$ is constant on the fibres of $g$. Factorisation of the period map via $g$ follows. If on the other hand
\[\xymatrix{   & \widetilde{Y} \ar[ld]_{\psi}\ar[rd]^{\pi}  &     \\
Y^{(k)} \ar[rd]_g \ar@{-->}[rr] &    & \ar[ld]  Y^{(k+1)} \\
         &  Y_{sm}  &    }
\]
is a flip diagram associated with the small contraction $g$, where $\psi: \widetilde{Y} \to Y^{(k)}$ is a resolution that resolves the rational map $Y^{(k)} \dasharrow Y^{(k+1)}$, then again by Takayama's result the pulled back period map $\varphi \circ \psi: \widetilde{Y} \to \mathcal{A}_g$ is constant along the fibres of  $\pi$ and hence descends to $Y^{(k+1)}$.

From now on, we deal with the case where $Y$ is $2$-dimensional and $D\neq \emptyset$. Again, we only need to consider one birational contraction morphism; to ease notation, we will denote it by $g: Y \to Y'$, with boundary divisor $D' = g_*(D)$.
To show that the period map factors though $U' = Y' \setminus D'$ we need to consider the curves $C\subset Y$ that are contracted by $g$ and have image in $Y' \setminus D'$. These occur in two types: 
\begin{enumerate}
\item Either $C$ is a component of a fibre of $g$ that does not intersect $D$, in particular $C \cap D = \emptyset$, or
\item $C$ is a connected component of the boundary divisor $D$.
\end{enumerate}
It follows from \cite[Lemma 2.3.5]{KollarKovacs} that in any case we have $C\cong \mathbb{P}^1$. If $C\cap D=\emptyset$, this immediately implies that the period map $\varphi$ is constant on $C$. As the corresponding fibre of $g$ is connected, we hence conclude that the period map descends to $Y'$.

It remains to consider case (2); so let $C$ be a connected component of $D$  that is contracted by $g$, so that $g(C)=:{p}\in U'$. By \autoref{prop:mmp}, we know that $p \in Y'$ is at worst klt, and therefore a finite quotient singularity by \cite[Prop.~4.18]{kollarmori}, cf.~the discussion in \autoref{subsubsect:orbifold}. Hence,  we may choose a small open neighborhood $V$ of $p$ in $U'$ such that $\bigl|\pi_1(V\setminus \{p\}) \bigr|$ is finite. By \cite[Theorem 9.5]{Griffiths1970}, the period map $\varphi' = \varphi \circ g|_{g^{-1}(V \setminus \{p\})}: V\setminus \{p\} \ra \cA_g $ extends to $p$. 

Finally, since the condition on local monodromy at infinity is checked at a general point of the boundary divisor $D_{\min} = (g_{\min})_*D$, the claim made in (3) follows from the assumptions fulfilled by the original family $f$. 

\end{proof}

\begin{rem}\label{rem:interiorsmooth}In the surface case, if $p \in U_{\min}$, the fibre of the the minimal model morphism (!) $Y \to Y_{\min}$ does not intersect the boundary divisor $D$. It follows that each contracted curve is actually a $(-1)$-curve and $U_{\min}$ is smooth. 
\end{rem}

\subsection{Factorisation of period maps via (partially) ample models}\label{subsect:canmodfactor}
In a second step, we want to descend the period map to models admitting a contraction from the (log) minimal model. First, we observe that the argument given at the beginning of the proof of \autoref{prop:factorminimal} together with Takayama's result recalled in \autoref{thm:takayama} establishes the following claim. 
\begin{prop}[Factorisation of period maps via canonical models]\label{prop:factorcanonicalcompact}
Let $Y_{\min}$ be the minimal model of a smooth projective variety $Y$ fulfilling the assumptions made in \autoref{subsec:setupfactorization}. Then, the period map $\varphi: Y_{\min} \to \mathcal{A}_g$ factors through the canonical morphism $g_{\can}: Y_{\min} \to Y_{\can}$. 
\end{prop}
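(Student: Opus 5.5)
The plan is to reproduce, for the canonical morphism, the fundamental-group argument already used for divisorial contractions in the proof of \autoref{prop:factorminimal}. Since $D=\emptyset$ here, $Y_{\min}$ is a projective klt (indeed terminal) variety of general type with $K_{Y_{\min}}$ nef and big. By the Basepoint-Free Theorem, as recalled in \autoref{subsubsect:model}, $K_{Y_{\min}}$ is semiample and defines the birational canonical morphism $g_{\can}\colon Y_{\min}\to Y_{\can}$. It has connected fibres, because $Y_{\can}$ is normal (it is the Proj of the canonical ring) and $g_{\can}$ is birational, so Zariski's main theorem applies.

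The first step is to invoke \autoref{thm:takayama}: the map $(g_{\can})_*\colon \pi_1(Y_{\min})\to\pi_1(Y_{\can})$ is an isomorphism. The second step is to deduce that every local system on $Y_{\min}$ is the pullback of one on $Y_{\can}$. In particular the monodromy representation of $R^1f_*\underline{\bZ}$ (transported to $Y_{\min}$ via \autoref{prop:factorminimal}) factors through $\pi_1(Y_{\can})$. It follows that for every fibre $F=g_{\can}^{-1}(y)$, the composition $\pi_1(F')\to\pi_1(Y_{\min})\to\operatorname{Sp}(2g,\bZ)$ is trivial for each connected component $F'$ of the normalisation of $F$, or of any subvariety of $F$. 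Indeed, its image in $\pi_1(Y_{\can})$ factors through the point $y$, so it is killed by the isomorphism.

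The third step shows that $\varphi$ is constant on the fibres. For each irreducible component $F_i$ of a positive-dimensional fibre, I would take a resolution $\nu\colon\widetilde F_i\to F_i$. The pulled-back VHS $\nu^*\varphi^*\bV$ has trivial monodromy, so its period map lifts to $\sieg_g$. This lift is a holomorphic map from a compact connected complex manifold into the bounded domain $\sieg_g$, hence constant. Since the fibre is connected and $\varphi$ is constant on each component, $\varphi$ is constant on all of $F$. The same reasoning also covers the case where one first resolves the singularities of $Y_{\min}$ and pulls back.

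The final step is descent. Because $Y_{\can}$ is normal and $g_{\can}$ is proper and surjective with connected fibres, $(g_{\can})_*\cO_{Y_{\min}}=\cO_{Y_{\can}}$. The rigidity lemma then gives a morphism $\varphi_{\can}\colon Y_{\can}\to\cA_g$ with $\varphi=\varphi_{\can}\circ g_{\can}$. The only delicate point is the constancy on fibres via triviality of monodromy, and it is handled by Takayama's theorem together with the boundedness of Siegel space.

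Two small subtleties remain. Working with a fine moduli space, i.e.\ with level structure, removes orbifold issues. Also, $Y_{\min}$ may be singular, but for the monodromy argument only topological $\pi_1$ is used, and $\varphi$ is already a regular map on $Y_{\min}$ by \autoref{prop:factorminimal}.
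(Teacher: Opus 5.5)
Your proposal is correct and takes the same approach as the paper. The paper's proof simply reuses the divisorial-contraction step of \autoref{prop:factorminimal}: Takayama's isomorphism $\pi_1(Y_{\min})\cong\pi_1(Y_{\can})$ (\autoref{thm:takayama}) forces local systems, and hence the period map, to be constant on the fibres of $g_{\can}$. You additionally spell out the details the paper leaves implicit, namely lifting $\varphi$ on resolved fibre components to the bounded domain $\sieg_g$, and descending via connected fibres and the rigidity lemma.
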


If $D_{\min} \neq \emptyset$, the log canonical model is not the correct model to consider, for at least two closely related reasons. 
First, natural snc compactifications of ball quotients in general are not log canonical models; instead, their canonical divisors are nef, big, and ample on the complement of the boundary, see \cite[Sect.~3]{MVZ}; the (minimal) Baily-Borel compactification on the other hand has ample canonical, but has boundary of higher codimension.\footnote{The reader is also referred to the very interesting recent paper \cite{cadorel} of Cadorel for a discussion of the geometry of resolutions of ball quotient compactifications.} Secondly, concerning uniformisation theory, it follows from \autoref{thm:logMY} and the second point just mentioned that for an orbifold surface pair $(X,D)$ with $K_X + D$ ample equality in the log MY inequality actually forces the boundary to be empty, and hence eventually $Y$ as a whole to be mapped into $\mathcal{A}_g$. 

Therefore, we next construct a birational model of an snc surface pair that is better adapted to the setup of period maps and uniformisation problems than the log canonical model. Since our subsequent analysis of this models depends rather crucially on the fact that we are in dimension two, we decided to also describe the construction of $Y_c$ using classical surface theory.\footnote{See \cite[Sect.~1.4]{kollar} for a general construction of partial resolutions of log canonical models.}

\begin{prop}[Construction of an intermediate nef and partially ample model]\label{prop:factorcanonical}
Let $(Y_{\min}, D_{\min})$ be the minimal model of an snc surface pair $(Y,D)$ that fulfils the assumptions made in \autoref{subsec:setupfactorization}. Let $g_\can: (Y_{\min}, D_{\min}) \to (Y_{\can}, D_{\can})$ be the log canonical morphism. Let 
\begin{equation}
    \mathcal{D}_0 := \left\{B \subset Y_{\min} \text{ prime divisor }\; \mid\; g_{\can}(B) = \{pt.\} \right\}
\end{equation}  Then, there exist a birational contraction $g_c: (Y_{\min}, D_{\min}) \ra (Y_c, D_c)$ with the following properties:
\begin{enumerate}
    \item The pair $(Y_c, D_c)$ is a log-canonical projective orbifold, and $g_c$ contracts exactly all divisors $B \in \mathcal{D}_0$ with $B\cap U_{\min}\neq \emptyset$. In particular, $K_c + D_c$ is nef, big,  and ample on $U_c$.\footnote{In the sense of Definition~\ref{def:ampleonU}.} 
    \item The period $\varphi_{\min}$ factors through $\varphi_c: U_c\ra \cA_g$; in particular, $\varphi_c$ is generically finite.
    \item The induced VHS on $U_{\c}$ has unipotent local monodromies at infinity. 
\end{enumerate}
\end{prop}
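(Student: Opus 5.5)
The plan is to build $Y_c$ as a partial contraction sitting between $Y_{\min}$ and $Y_{\can}$: it contracts only those curves of the exceptional locus of $g_\can$ that meet the interior $U_{\min}$, and keeps the exceptional boundary components uncontracted.

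\textbf{Construction of $g_c$.} Write $\mathcal{D}_0^{\mathrm{int}}\subset\mathcal{D}_0$ for the curves $B$ with $B\cap U_{\min}\neq\emptyset$; such a $B$ is not a component of $D_{\min}$. Write $\mathcal{D}_0^{\partial}$ for the remaining curves of $\mathcal{D}_0$, which are components of $D_{\min}$.
\begin{enumerate}
\item Every connected configuration of curves in $\mathcal{D}_0$ is contracted to a point by $g_\can$, so its intersection matrix is negative definite. Hence the same holds for any subconfiguration drawn from $\mathcal{D}_0^{\mathrm{int}}$, and Grauert's criterion gives an analytic contraction $g_c\colon Y_{\min}\to Y_c$ of exactly these curves.
\item Every $B\in\mathcal{D}_0$ satisfies $(K_{Y_{\min}}+D_{\min})\cdot B=0$. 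The divisor $K+D$ is therefore $g_c$-trivial and descends to $K_c+D_c$ with $g_c^*(K_c+D_c)=K_{Y_{\min}}+D_{\min}$. So $g_c$ is crepant for the pairs.
\item By crepancy, discrepancies are unchanged. Since $(Y_{\min},D_{\min})$ is dlt, the pair $(Y_c,D_c)$ is log canonical.
\item Nefness and bigness of $K_c+D_c$ are inherited from $K_{Y_{\min}}+D_{\min}$.
\end{enumerate}

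\textbf{Projectivity (main obstacle).} I expect projectivity of $Y_c$ to be the hardest point, because lc surface singularities need not be rational: simple elliptic and cusp points can occur away from $D_c$, so Artin's criterion does not apply directly. I would try an explicit divisor first. Take an effective divisor $F$ supported on $\mathcal{D}_0^{\partial}$ with $F\cdot E<0$ for every $E\in\mathcal{D}_0^{\partial}$; it exists by negative definiteness. Then consider $L:=g_\can^*A-\varepsilon F$ for $A$ ample on $Y_\can$, together with a correction supported on $\mathcal{D}_0^{\mathrm{int}}$ that makes $L$ numerically trivial on exactly the curves of $\mathcal{D}_0^{\mathrm{int}}$ and positive on everything else. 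I would then show that $L$ descends to $Y_c$ and apply Nakai--Moishezon there. If this bookkeeping fails, I would instead invoke the general construction of partial resolutions of log canonical models in \cite[Sect.~1.4]{kollar}.

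\textbf{Orbifold property and ampleness on $U_c$.} Each singular point of $(Y_c,D_c)$ is an isolated singular point of an lc surface pair with reduced boundary. Indeed, $Y_{\min}$ is dlt and hence orbifold, and the new points are isolated. So \autoref{prop:lcpairs_as_quotients} shows that $(Y_c,D_c)$ is an orbifold pair. For ampleness on $U_c$ in the sense of \autoref{def:ampleonU}, I would argue as follows. Every curve on which $K_c+D_c$ is numerically trivial is contracted by the induced morphism $Y_c\to Y_\can$, so it lies in the exceptional locus of that morphism. By construction this locus consists only of curves of $\mathcal{D}_0^{\partial}$, i.e.\ of components of $D_c$. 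Hence the morphism given by $|m(K_c+D_c)|$ is an isomorphism onto its image over $U_c$.

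\textbf{Factorisation of the period map and monodromy.} The fibres of $g_c$ over $U_c$ are connected trees of curves of $\mathcal{D}_0^{\mathrm{int}}$, and I would treat them as in the proof of \autoref{prop:factorminimal}.
\begin{enumerate}
\item If the image point $p=g_c(\text{fibre})$ lies in $U_c$ and the fibre is disjoint from $D_{\min}$, then by \autoref{rem:interiorsmooth} $Y_{\min}$ is smooth near the fibre and $p$ is an isolated lc, hence orbifold, point. Thus a punctured neighbourhood $V\setminus\{p\}$ has finite fundamental group, and \cite[Theorem 9.5]{Griffiths1970} extends the period map across $p$.
\item Alternatively, I would use Takayama-type invariance of $\pi_1$ (\autoref{thm:takayama}) to see that the local system is constant on the fibres. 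This also covers elliptic and cusp points: the local fundamental group is then infinite, but the relevant local monodromy factors through the orbifold chart.
\end{enumerate}
Generic finiteness of $\varphi_c$ is clear because $g_c$ is birational. Unipotency of the local monodromies at infinity is checked at general points of $D_c$, and $g_c$ is an isomorphism there, so it is inherited from $(Y_{\min},D_{\min})$ as in \autoref{prop:factorminimal}.
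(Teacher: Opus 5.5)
There is a genuine gap at the step you flag yourself, projectivity of $Y_c$. It rests on a false premise: simple elliptic or cusp points do \emph{not} occur on $Y_c$ away from $D_c$.

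If $p\in U_c$, the fibre $Z=g_c^{-1}(p)$ misses $D_{\min}$, because no component of $D_{\min}$ is contracted and so $g_c(D_{\min})=D_c$. Hence $Z\subset U_{\min}$, which is smooth by \autoref{rem:interiorsmooth}. Every curve $B\subset Z$ has $K_{Y_{\min}}\cdot B=(K_{Y_{\min}}+D_{\min})\cdot B=0$ and $B^2<0$, so adjunction forces $p_a(B)=0$ and $B^2=-2$. Thus $Z$ is an ADE configuration and $p$ is a rational double point. This is just your own crepancy observation: all discrepancies over $p$ are $0$, whereas simple elliptic and cusp points have discrepancy $-1$. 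Elliptic curves in $\mathcal{D}_0$ can only be components of $D_{\min}$, and $g_c$ does not contract those.

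At points of $D_c$ the pair is lc with non-zero reduced boundary through the point, so the singularity is rational by \cite[Prop.~2.28]{kollar}. Hence $Y_c$ has only rational singularities, and Artin's criterion \cite[Thm.~2.3]{ArtinNumericalCrit} gives projectivity at once. This is the paper's argument; it obtains the contraction from \cite[Cor.~5.3]{schroeer} rather than from Grauert.

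Your explicit-divisor substitute does not close the gap, for two reasons.
\begin{itemize}
\item The step ``show that $L$ descends to $Y_c$'' is exactly where rationality is needed. A line bundle numerically trivial on the exceptional curves is trivial near them over a rational singularity, but not in general over an elliptic one.
\item Positivity of $L$ on $\mathcal{D}_0^{\partial}$ is not automatic for an arbitrary $F$. The correction $G$ forced by $(G-\varepsilon F)\cdot B=0$ has $-G$ effective, which lowers $L\cdot E$. You would have to take $\varepsilon F-G$ to be the unique $N$ with $N\cdot B=0$ on $\mathcal{D}_0^{\mathrm{int}}$ and $N\cdot E=-1$ on $\mathcal{D}_0^{\partial}$.
\end{itemize}

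The false premise also breaks the reasoning in your orbifold and factorisation steps.
\begin{itemize}
\item \autoref{prop:lcpairs_as_quotients} needs the boundary to pass through the point; its proof drops $D$ to obtain a klt point. An lc point with $D=0$ need not be a quotient singularity, so ``isolated lc, hence orbifold'' is invalid at points of $U_c$.
\item An elliptic or cusp point would have infinite local fundamental group and no finite orbifold chart, so your item~(2) cannot work. Nor does \autoref{thm:takayama}, a statement about compact klt varieties, apply to $U_{\min}$.
\end{itemize}

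Once you know the interior points are rational double points, both steps are repaired. They are quotient singularities, so \cite[Theorem~9.5]{Griffiths1970} applies as in your item~(1). Alternatively, as in the paper, each fibre is a tree of rational curves on which $\varphi_{\min}$ is constant.

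Your remaining steps are fine: crepancy (best phrased as $K_{Y_c}+D_c=g^*(K_{Y_{\can}}+D_{\can})$ for the induced $g\colon Y_c\to Y_{\can}$), the lc property, nefness and bigness, ampleness on $U_c$, and unipotency.
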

\begin{proof} 
The union of exceptional fibres of $g_{\can}$ intersecting $U_{\min}$ contains exactly the curves we are interested in.  Since $Y_{\min}$ is $\mathbb{Q}$-factorial, it follows for example from \cite[Cor.~5.3]{schroeer} that there exists a proper normal algebraic surface $Y_c$ and a birational morphism $g_c: Y_{\min}\ra Y_c$ contracting exactly these fibres. 
Letting $D_c:=(g_c)_*(D_{\min})$, we have the following diagram:
\begin{equation}\label{eq:contractiondiagram}
  \begin{tikzcd}
    (Y_{\min}, D_{\min}) \arrow{r}{g_c} \arrow[swap]{dr}{g_{\can}} & (Y_c, D_c) \arrow{d}{g} \\
     & (Y_{\can}, D_{\can}).
  \end{tikzcd}
\end{equation}

In order to prove (1), we start analysing the singularities of the pair $(Y_c, D_c)$. Locally, near points $p \in Y_c$ with nontrivial exceptional fibre $g_c^{-1}(p)\subset Y_{\min}$, the singularities are the same as the singularities of the corresponding point in the log canonical model $(Y_{\can}, D_{\can})$. Hence, if $p \notin D_c$, the variety $Y_c$ has canonical singularities near $p$, see for example \cite[Thm.~1-6-4]{matsuki} for the necessary local computation, and if $p\in D_c$, the pair $(Y_c, D_c)$ is log canonical near $p$. We conclude from rationality of canonical singularities and \cite[Prop.~2.28]{kollar}, respectively, that $Y_c$ hat rational surface singularities. In particular, $Y_c$ is projective by \cite[Thm.~2.3]{ArtinNumericalCrit}. The classical fact that canonical surface singularities are finite quotient singularities and the orbifold property of log canonical surface pairs with reduced boundary divisor recalled in \autoref{prop:lcpairs_as_quotients} together imply that $(Y_c, D_c)$ is an orbifold pair. 

Moreover, looking at Diagram \autoref{eq:contractiondiagram}, recalling that $g_c$ contracts exactly those divisors of $\mathcal{D}_0$ that intersect $U_{\min}$, and using that $K_{Y_{\min}} + D_{\min}$ is the pullback of the ample divisor $K_{Y_{\can}} + D_{\can}$ under $g$, we see that 
\begin{equation}\label{eq:crepant}K_{Y_c}+ D_c = g^*(K_{Y_{\can}} + D_{\can})\end{equation}
is ample with respect to $U_c$, as claimed.

Starting to prove (2), we first notice that we only have to define the map providing the factorisation of the period map at points in $p\in U_c = Y_c \setminus D_c$ over which $g_c$ has positive-dimensional fibre. By construction, the fibre $g_c^{-1}(p)$ over such a point is completely contained in $U_{\min}$, which is smooth by \autoref{rem:interiorsmooth}; hence, $g_c: U_{\min} \to U_c$ is a minimal resolution. As $p \in Y_c$ is a rational and even canonical singularity, it follows either by using $H^1\bigl(g_c^{-1}(p), \mathscr{O} \bigr) = \{0\}$ or from the classification given e.g.~in \cite[Thm.~1.13.9]{Kawamata2024} that each such fibre is a tree of rational curves.  Consequently, the period map $\varphi_{\min}$ restricted to $g_c^{-1}(p)$ is constant. The desired factorisation follows.

Since the condition on monodromy at infinity is checked at a general point of the boundary divisor $D_{c} = (g_{c})_*D$, the claim made in (3) follows from the assumptions fulfilled by the original family $f$. 
\end{proof}

\subsection{Geometry of MMP and non-extendable period maps}\label{subsect:nonextendable}

Next, we discuss extendability of families, properness of period maps, and how these conditions interact with birational contractions of surface pairs.

\begin{definition}[Non-extendable families]\label{def:nonextendabable}
Let $f: A \to U = Y \setminus D \subset Y$ be as in \autoref{subsec:setupfactorization} and $\dim U =2$ with associated period map $\varphi_f :  U \to \mathcal{A}_g$ to some fine moduli space of Abelian varieties. We say that the family $f$ / the period map $\varphi_f$ is \emph{non-extendable} if the monodromy of $R^1f_*\mathbb{C}_A$ around any component of $D$ is infinite. 
\end{definition}

In fact, any family as in \autoref{subsec:setupfactorization} can be extended to a non-extendable family over those components of $D$ along which the monodromy is not infinite by \cite[Thm.~6.4]{GriffithsBulletin}.  Non-extendability can be characterized in terms of the associated period map as follows: A family $f: A \to U =Y\setminus D$ as above allows for an extension of its period map $\varphi_f:U \to \cA_g$ to a map from $Y$ to the the Baily-Borel  compactification of $\cA_g$, which we denote by $\overline{\varphi}_f:Y \to \bar{\cA}_g$, see \cite[Thm.~6.6]{GriffithsBulletin}.\footnote{As a consequence, the period map $\varphi_f$ is algebraic.} Then, if $f$ is assumed to be non-extendable, we have 
\begin{equation}
   \overline{ \varphi}_f (D) \, \subset \, \overline{\mathcal{A}}_g \setminus \mathcal{A}_g.
\end{equation}
I.e., the boundary of $U$ in Y is mapped to the boundary of $\mathcal{A}_g$ in the chosen compactification. As an important consequence, the period map $\varphi_f: U \to \mathcal{A}_g$ is proper, cf.~\cite[pp.~157f]{Griffiths1970}.

\begin{prop}[Non-extendable families and minimal models]\label{prop:propernesstominimal}
	Let $Y$ be a smooth projective surface and $D\subset Y$ a simple normal crossing divisor. Suppose that $U=Y\setminus D$ supports a non-extendable family of principally polarised Abelian varieties satisfying the assumption of \autoref{subsec:setting}. Let $C \subset Y$ be a positive-dimensional fibre of the  morphism  $g_{\min}: Y\ra Y_{\min}$ to the log minimal model. If $C \cap U \neq \emptyset$, then $C \cap D = \emptyset$, and hence $C \subset U$. 
	
	In other words, $U$ is saturated with respect to $g_{\min}$, and consequently, the descended period map $\varphi_{\min}: U_{\min} \to \mathcal{A}_g$ guaranteed by \autoref{prop:factorminimal} continues to be proper. 
\end{prop}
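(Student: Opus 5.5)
Argue by contradiction: suppose $C$ is a connected positive-dimensional fibre of $g_{\min}$ with $C\cap U\neq\emptyset$ and $C\cap D\neq\emptyset$. In the surface case $g_{\min}$ is a composition of finitely many contractions of single curves, each a $(K+D)$-negative extremal contraction. It therefore suffices to treat the first step at which a curve $E$ is contracted that meets both $U$ and the (image of the) boundary, and to show this cannot happen. Any curve of $C$ meeting $U$ has an irreducible component meeting both $U$ and $D$, since $C$ is connected. So we may assume there is an irreducible curve $E\not\subset D$ with $E\cap D\neq\emptyset$ that is contracted at some step.

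\textbf{Step 1.} Such an $E$ satisfies $E\cong\mathbb{P}^1$ by \cite[Lemma 2.3.5]{KollarKovacs}. Moreover, $(K+D)\cdot E<0$ together with $E\cdot D>0$ and adjunction on the (dlt, hence orbifold) intermediate surface forces $E\cdot D = 1$, with $E$ meeting $D$ at a single point of a single component. I would check this with the local classification of dlt surface pairs and the fact that $K\cdot E\geq -1$ in the orbifold/klt sense for a contracted $\mathbb{P}^1$. The conclusion is that $E\cap U\cong \mathbb{C}$, up to orbifold points, so its fundamental group is finite.

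\textbf{Step 2.} Now use non-extendability, which is preserved along earlier steps because the earlier contracted curves lie in $U$ or in $D$ and the monodromy along the generic point of a boundary component is unchanged. By \cite[Thm.~6.6]{GriffithsBulletin} the period map extends to $\overline{\varphi}:Y\to\overline{\mathcal{A}}_g$ and sends $D$ into $\overline{\mathcal{A}}_g\setminus\mathcal{A}_g$. Restrict $\overline{\varphi}$ to $E$. On $E\cap U\cong\mathbb{C}$ (or a finite quotient of it), the variation of Hodge structures has finite monodromy, because $\pi_1$ is finite after passing to a finite orbifold cover. By Griffiths' extension theorem \cite[Thm.~9.5]{Griffiths1970} the period map then extends holomorphically over the puncture into $\mathcal{A}_g$. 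Since $\mathbb{P}^1$ admits no nonconstant maps to $\mathcal{A}_g$ (the Siegel space is bounded, or equivalently horizontal maps from compact curves with trivial monodromy are constant), $\varphi|_{E\cap U}$ is constant with value in $\mathcal{A}_g$. By continuity, however, the point $E\cap D$ must then map into $\mathcal{A}_g$, contradicting $\overline{\varphi}(D)\subset\overline{\mathcal{A}}_g\setminus\mathcal{A}_g$.

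\textbf{Step 3.} Saturation follows: fibres of $g_{\min}$ meeting $U$ lie in $U$, so $U=g_{\min}^{-1}(U_{\min})$. Properness of $\varphi_{\min}$ follows because $\varphi=\varphi_{\min}\circ g_{\min}|_U$ is proper and $g_{\min}|_U:U\to U_{\min}$ is surjective. Indeed, the preimage under $\varphi_{\min}$ of a compact set $K$ is $g_{\min}(\varphi^{-1}(K))$, which is compact.

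The main obstacle is Step 1. One has to make sure that a contracted curve meeting $D$ meets it in only one point, and handle orbifold points of the intermediate models; if $E$ passes through a klt point, $E\cap U$ is a quotient of $\mathbb{C}$ or of $\mathbb{C}^*$. The $\mathbb{C}^*$ case, where $E$ meets $D$ at two points, must be excluded using $(K+D)\cdot E<0$. If this fails, I would fall back on the fact that the boundary is mapped to the boundary together with Borel-type hyperbolicity of $\mathcal{A}_g$, which forbids nonconstant maps from $\mathbb{C}^*$ with unipotent monodromy once the image of the curve $E$ itself is considered.
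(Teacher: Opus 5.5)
\paragraph{Where the proposal stands.}
Your skeleton matches the paper's proof. You reduce to one contraction step, get $E\cong\mathbb{P}^1$ from \cite[Lemma 2.3.5]{KollarKovacs}, show $E$ meets $D$ in a single point so that $E\setminus D\cong\mathbb{C}$, conclude the period map is constant there, and contradict $\overline{\varphi}(D)\subset\overline{\mathcal{A}}_g\setminus\mathcal{A}_g$ by passing to the limit. The gap is in Step 1, which is the step that carries the content, and the argument you give for it does not work.

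\paragraph{Why Step 1 fails as written.}
From $K\cdot E\geq -1$ and $(K+D)\cdot E<0$ you get $D\cdot E<1$, not $D\cdot E=1$. (The bound $K\cdot E\geq -1$ is indeed true for a smooth rational curve with $E^2<0$ on a klt surface, via the minimal resolution.) If the surface were smooth along $E\cap D$, this bound would already force $E\cap D=\emptyset$. So the curves you actually have to exclude meet $D$ only at quotient singularities of the intermediate model, where local intersection numbers are fractions $\geq 1/m$. A global bound $D\cdot E<1$ therefore cannot count intersection points. For example, it does not rule out two points of $E\cap D$ with local intersection $1/3$ each (so $E\setminus D\cong\mathbb{C}^*$), nor more points. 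Step 1 thus leaves open exactly the cases that matter.

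\paragraph{The missing local input.}
What is needed is local information at the points of $E\cap D$. The paper gets it from the second half of \cite[Lemma 2.3.5]{KollarKovacs}: the image point $p$ of the contraction is plt. For a plt surface pair with reduced boundary, the boundary is normal, hence unibranch, at $p$ (\cite[Thm.~4.15]{kollarmori}). Each point of $E\cap D$ gives a local branch of $D_{\min}$ through $p$, so $E\cap D$ is a single point.

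If you prefer to stay numerical, use adjunction with the different instead of $K\cdot E\geq -1$. One has $(K+E+D)\cdot E=-2+\deg \mathrm{Diff}_E(D)$, and the different has coefficient $\geq 1$ at every point of $E\cap D$. At a cyclic quotient point of index $m$, this coefficient is $1-\tfrac1m$ plus a local intersection number $\geq\tfrac1m$. Since $E^2<0$, this gives $(K+D)\cdot E>-2+\#(E\cap D)$, hence $\#(E\cap D)\leq 1$.

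\paragraph{Your fallback and the remaining steps.}
The hyperbolicity fallback is correct in itself: any holomorphic map $\mathbb{C}^*\to\mathcal{A}_g$ is constant, since it lifts via $\exp$ to a map $\mathbb{C}\to\sieg_g$. The paper uses this ``at most two points'' version in \autoref{prop:propernesstomodel}. But the fallback still needs an a priori bound $\#(E\cap D)\leq 2$, which your Step 1 does not supply.

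Steps 2 and 3 are fine. Step 2 can be shortened: $E\setminus D\cong\mathbb{C}$ is simply connected, so the period map lifts to the bounded domain $\sieg_g$ and is constant by Liouville. No Griffiths extension is needed.
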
 
\begin{proof} Assume that $C \cap U \neq \emptyset$ but (aiming for a contradiction) also $C \cap D \neq \emptyset$. We may assume that $g_{\min}$ is a contraction morphism in one step of the log minimal model program, so that $C = g_{\min}^{-1}(p)$ is irreducible. Applying \cite[Lemma 2.3.5]{KollarKovacs} we see that $C \cong \mathbb{P}^1$ and that  the image point $p \in (Y_{\min}, D_{\min})$ is plt. This implies that at most one branch of $D_{\min} = (g_{\min})_*(D)$ passes through $p$, see \cite[Thm.~4.15]{kollarmori} or \cite[start of Sect.~3.35]{kollar}. Therefore, $C$ intersects $D$ in a single point, say $q \in Y$. It follows that the period map $\varphi_f$ is constant on $C \setminus D \cong \mathbb{C}$, say with value $\xi \in \mathcal{A}_g$. Taking limits, it follows that $\overline{\varphi}_f (q) = \xi$.  This contradicts non-extendability of $f$, which says that $q\in D$ needs to be mapped to  $\overline{\mathcal{A}}_g \setminus \mathcal{A}_g$ by $\overline{\varphi}_f$. 
\end{proof}

Passing one step further to the log canonical model, the analogous statement is true; note that the proof is slightly more involved, as a priori we have to deal with full exceptional fibres of the log canonical morphism at once. 
\begin{prop}[Non-extendable families and partially ample models]\label{prop:propernesstomodel}
	Let $Y$ be a smooth projective surface and $D\subset Y$ a simple normal crossing divisor. Suppose that $U=Y\setminus D$ supports a non-extendable family of principally polarised Abelian varieties satisfying the assumption of \autoref{subsec:setting}. Let $C \subset Y$ be a positive-dimensional fibre of the  morphism  $\gamma: Y\ra Y_{\can}$ obtained by composing the map $g_{\min}: Y \to Y_{\min}$ to the log minimal model with the log canonical morphism $g_{\can}: (Y_{\min}, D_{\min}) \to (Y_{\can}, D_{\can})$. If $C \cap U \neq \emptyset$, then $C \cap D = \emptyset$, and hence $C \subset U$.\\
		In particular, $U_{\min}$ is saturated with respect to $g_{\can}$ and hence with respect to the contraction $g_c: Y_{\min} \to Y_c$ constructed in  \autoref{prop:factorcanonical} . As a consequence, the descended period map $\varphi_{c}: U_c \to \mathcal{A}_g$ continues to be proper. 
\end{prop}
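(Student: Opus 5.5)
We argue by contradiction, as in the proof of \autoref{prop:propernesstominimal}. By that proposition, $U$ is saturated with respect to $g_{\min}$, so it suffices to treat a positive-dimensional connected fibre $C' = g_{\can}^{-1}(p)$ of the log canonical morphism on $Y_{\min}$ with $C'\cap U_{\min}\neq\emptyset$, and to show that $C'\cap D_{\min}=\emptyset$. Indeed, then $\gamma^{-1}(p) = g_{\min}^{-1}(C')$ misses $D$: a point of $D$ over $C'$ would map into $C'\cap D_{\min}$, because $g_{\min}$ maps $D$ onto $D_{\min}$ together with finitely many points of $U_{\min}$, and over those points \autoref{prop:propernesstominimal} excludes meeting $D$. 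So assume $C'\cap D_{\min}\neq\emptyset$. Then $p\in D_{\can}$, and $(Y_{\can},D_{\can})$ is log canonical at $p$. The fibre $C'$ is a connected union of curves, and it contains at least one curve $B$ with $B\cap U_{\min}\neq\emptyset$ and $B\not\subset D_{\min}$.

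The key step is to understand the fibre structure. For this I would use the classification of lc surface singularities with reduced boundary, via \autoref{prop:lcpairs_as_quotients} and the references \cite{KawamataCrepant,matsuki}. Since $K_{Y_{\min}}+D_{\min} = g_{\can}^*(K_{Y_{\can}}+D_{\can})$, the contraction $g_{\can}$ is crepant for the dlt pair. The dual graph of $C'\cup D_{\min}$ near the fibre is therefore a chain (or one of the few lc graphs), and every non-boundary component of $C'$ is a smooth rational curve. A component $B$ of $C'$ not contained in $D_{\min}$ meets the rest of $C'\cup D_{\min}$ in at most two points. Because the fibre is connected and touches $D_{\min}$, I would pick a component $B\not\subset D_{\min}$ meeting $D_{\min}$. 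If $B$ meets $D_{\min}$ in a single point, then $B\setminus D_{\min}$ is $\mathbb{C}$, or $\mathbb{C}$ with finitely many further punctures coming from the other curves of the chain. In either case it is simply connected away from points where $\varphi_{\min}$ extends holomorphically, since those points lie in $U_{\min}$. Hence $\varphi_{\min}$ is constant on $B\setminus D_{\min}$, because compact rational curves, and likewise affine lines, in $U_{\min}$ are contracted by the period map; this uses Griffiths' extension theorem and the fact that $\mathcal{A}_g$ contains no rational curves. Taking limits as in \autoref{prop:propernesstominimal}, the extension $\overline{\varphi}$ sends the point $B\cap D_{\min}$ into $\mathcal{A}_g$. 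This contradicts non-extendability, which is inherited by $(Y_{\min},D_{\min})$ because it is a condition on the monodromy around boundary components.

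The main obstacle is the case where $B$ meets $D_{\min}$ in two points, as in a chain connecting two boundary branches, or the case of cusp-like cycles. There $B\setminus D_{\min}\cong\mathbb{C}^*$, and constancy is not automatic. My plan is to use the unipotent, infinite monodromy: the restriction of the VHS to $\mathbb{C}^*$ has quasi-unipotent monodromy, and by Griffiths' theorem the period map on $\mathbb{P}^1$ minus two points extends to the Baily--Borel compactification. Constancy then follows from hyperbolicity of $\mathcal{A}_g$, since the image of $\mathbb{C}^*$ in a quotient of a bounded domain is constant. The limit argument then again gives a boundary point mapped into $\mathcal{A}_g$.

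Properness of $\varphi_c$ finally follows from saturation. Since $U_{\min}=g_c^{-1}(U_c)$ and $g_c$ is proper, a compact set $K\subset\mathcal{A}_g$ has preimage $\varphi_c^{-1}(K)=g_c\bigl(\varphi_{\min}^{-1}(K)\bigr)$, which is compact.
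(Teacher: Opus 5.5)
Your overall strategy is sound: pick one component $B\not\subset D_{\min}$ of $C'$ meeting $D_{\min}$, show $\varphi_{\min}$ is constant on $B\setminus D_{\min}$ by hyperbolicity, and contradict non-extendability by continuity of $\overline{\varphi}$. The gap is in the step that carries all the content, namely that $B$ meets $D_{\min}$ in at most two points. It is not proved, and the reason you give does not apply.

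The classification of lc surface pairs describes the minimal resolution of $(Y_{\can},D_{\can})$ at $p$. When $p\in D_{\can}$, all exceptional curves of that resolution have negative discrepancy. By crepancy of $g_{\can}$, however, every component of $C'$ not contained in $D_{\min}$ has discrepancy exactly $0$. So $B$ is not a vertex of any of those graphs; typically it is a $(-1)$-curve of a non-minimal model. A priori, nothing in your argument stops $B$ from meeting several contracted components of $D_{\min}$, or from passing through singular points of $Y_{\min}$, all of which lie on $D_{\min}$. Each of these is a puncture of $B\setminus D_{\min}$, and the bound on the number of branches of $D_{\can}$ at $p$ controls none of them. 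Also, $p\in D_{\can}$ is not automatic: a whole connected component of $D_{\min}$, e.g.\ a cycle of rational curves, could lie in $C'$.

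This is why the paper separates cases. Fibres with a component inside $D$ are treated by contracting the maximal connected subtree of such components and extending the period map over the resulting quotient singularity by Griffiths' theorem. Only when every component of $C$ meets $U$ does the paper use that $C\cap D$ lies on the strict transforms of the at most two branches of $D_{\can}$, so that $C\setminus D$ is a tree of rational curves with at most two punctures.

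Within your framework the gap closes by adjunction rather than classification. Since $B$ is $g_{\can}$-exceptional, $(K_{Y_{\min}}+D_{\min})\cdot B=0$ and $B^2<0$, and $Y_{\min}$ is $\mathbb{Q}$-factorial. Hence
\[
B^2=(K_{Y_{\min}}+D_{\min}+B)\cdot B=\deg\bigl(K_{B^\nu}+\mathrm{Diff}_{B^\nu}(D_{\min})\bigr)\;\geq\; 2g(B^\nu)-2+\#\,\nu^{-1}(D_{\min}).
\]
The inequality holds because each point of $B^\nu$ over $D_{\min}$ occurs in the different with coefficient at least $1$:
\begin{itemize}
\item where $(Y_{\min},B)$ is plt, the point is a cyclic quotient of some index $m$, $\mathrm{Diff}_{B^\nu}(0)$ contributes $1-1/m$, and the local intersection with $D_{\min}$ contributes at least $1/m$;
\item where it is not plt, $\mathrm{Diff}_{B^\nu}(0)$ alone already contributes at least $1$.
\end{itemize}
Therefore $B^\nu\cong\mathbb{P}^1$ and $B$ meets $D_{\min}$ in exactly one point. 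The $\mathbb{C}^*$ case you single out as the main obstacle never arises, and it would be harmless anyway, since every holomorphic map $\mathbb{C}^*\to\mathcal{A}_g$ is constant.

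With this, your one-component argument is complete and needs no case distinction. It also directly handles fibres in which a contracted boundary curve meets a non-contracted component of $D$. There, the point obtained by contracting the boundary subtree lies on the image of $D$, so Griffiths' extension theorem cannot be applied at it.

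Two smaller fixes:
\begin{itemize}
\item That no connected component of $D$ is contracted by $g_{\min}$ to a point of $U_{\min}$ does not follow from \autoref{prop:propernesstominimal}. It follows from non-extendability together with Griffiths' extension over the klt image point, as in the proof of \autoref{prop:factorminimal}.
\item The final limit should be taken on $Y$, along the strict transform of $B$, using $g_{\min}^{-1}(D_{\min})\subset D$, since $Y$ is where $\overline{\varphi}$ is defined.
\end{itemize}
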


\begin{proof}
Assume that for the fibre $C$ over a point $p \in Y_{\can}$, we have $C \cap U \neq \emptyset$ but (aiming for a contradiction) also $C \cap D \neq \emptyset$. 
As we have noticed before, since necessarily $p \in D_{\can}$, the variety $Y_{\can}$ underlying log canonical model $(Y_{\can}, D_{\can})$ has klt and hence rational singularities near $p$, cf.~the proof of \autoref{prop:lcpairs_as_quotients}. As a consequence, $C$ is a tree of rational curves. 

If one, say $C_0$, of these rational curves lies in $D$, let $C' \subset C$ be the largest connected union of irreducible components of $C$ containing $C_0$ and contained in $D$. This is a subtree of $C$. Since $C' \subset C$, and $C$ is contracted, the curve $C'$ has negative definite intersection matrix, and by Grauert's criterion can therefore be contracted to a point $q$ in a normal complex surface $Z$, say by the birational morphism $\psi: Y \to Z$; see also \cite{ArtinNumericalCrit}. If $D_Z := \psi_*(D)$, then by construction we have $q \in Z \setminus D_Z$.  Since $q$ is obtained by contracting the subgraph of the exceptional curve $C$ in the resolution $Y \to Y_{\can}$ of the klt singularity $p \in Y_{\can}$, the germ $(Z, q)$ also has klt singularities, as a short discrepancy computation shows. In particular, $Q$ is a finite quotient singularity, \cite[Prop.~4.18]{kollarmori}, and the local fundamental group of $Z$ at $q$ is finite. We may therefore apply \cite[Theorem 9.5]{Griffiths1970} to extend the restriction of the period map $\varphi_f$ to a neighbourhood of $C'$ over $q$, and hence over $C'$, contradicting non-extendability. 

By the previous considerations, we know that every component of $C$ intersects $U$. It follows that the number of points of intersection of $C$ with $D$ is estimated by the number of branches of $D_{\can}$ near $p$. The classification of log canonical surface pairs with reduced boundary divisor, \cite[Thm.~4.15]{kollarmori}, implies that there are at most two branches of $D_{\can}$ through $p$. Therefore, the period map $\varphi$ is constant along $C \setminus D$, which is a chain of rational curves with at most two points removed. The desired contradiction now follows as at the end of the proof of \autoref{prop:propernesstominimal} above. 
\end{proof}

\begin{rem}[Recent work of Cadorel]
If a fibre of the log canonical morphism $Y_{\min} \to Y_{can}$ does not intersect $U_{\min}$, it can be an elliptic curve, leading to infinite local fundamental group at the image point. The separation between boundary components and fibres of MMP operations observed above should be compared with the assumptions of Theorem 2 in Cadorel's recent paper \cite{cadorel}. In the same paper, he obtains very interesting results concerning the triviality of variations of Hodge structures along fibres of not necessarily isolated log canonical singularities. A generalisation to the higher-dimensional \emph{pair} situation would be an important step in extending the results obtained in our paper to families over higher-dimensional \emph{quasi-projective} base manifolds. 
\end{rem}

\section{The Arakelov inequality} \label{sect:inequality}

In this section we will prove \autoref{intro:inequality}. Let $(Y,D)$ be a pair and $\bV$ be a variation of Hodge structures on $U := Y\setminus D$ with unipotent local monodromies at infinity, i.e., around $D$.
Recall from \autoref{def:logHiggssing} that we can define the associated system of log-Hodge bundles $(E,\theta)$ on $Y$.

The following is a generalization of \cite[Prop. 3.3]{Simpson88} in the special case of VHSs. For smooth $Y$, the result is established in  \cite[Prop. 2.4]{VZ07}.

\begin{theorem}[Polystability of the associated system of log Hodge bundles]
	\label{thm:simpsonVHS_singular}
	Let $(Y,D)$ be a projective log canonical pair such that $K_Y + D$ is big and nef. Let $\bV$ be a variation of Hodge structures on $U:=Y\setminus D$ with unipotent local monodromies at infinity. Let $(\mathscr{E},\theta)$ be the log-Higgs sheaf obtained from the associated system of log-Hodge bundles $(E^\circ, \theta^\circ)$ on the regular part $(Y,D)_\reg = (Y^\circ, D^\circ)\hookrightarrow (Y,D)$. Then, the following statements hold:
	\begin{enumerate}
	    \item[(i)] For any proper saturated log-Higgs subsheaf $\mathscr{F} \subset \mathscr{E}$ we have \begin{equation}\label{eq:muinequ}\mu(\mathscr{F})\leq \mu(\mathscr{E})=0;\end{equation} i.e., the log-Higgs sheaf $(\mathscr{E},\theta)$ is semistable with respect to the polarisation $K_Y + D$.
	    \item[(ii)] If $K_Y + D$ is ample on $U$ and equality holds in \eqref{eq:muinequ}, there exists a holomorphic splitting \begin{equation}\label{eq:splitting}
	        \mathscr{E}|_{U_\reg}= \mathscr{F}|_{U_{\reg}}\oplus \mathscr{W}
	    \end{equation} of  systems of Hodge bundles over the regular locus $U_{\reg}$ of $U$, induced by a splitting of the corresponding variation of Hodge structures.
	    \item[(iii)] If moreover $(Y,D)$ is an orbifold pair, the  splitting  \eqref{eq:splitting} extends to $Y$, thus making $\mathscr{F}$ a system of orbifold log-Hodge bundles direct summand of $\mathscr{E}$. In particular, $\mathscr{F}|_{U_\reg}$ is locally free, and all orbifold Chern classes of $\mathscr{F}$ vanish.
	\end{enumerate}
\end{theorem}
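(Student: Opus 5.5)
The strategy is to reduce to the snc/smooth situation of \cite[Prop.~2.4]{VZ07}, resp.\ \cite[Prop.~3.3]{Simpson88}, by computing all degrees on curves that avoid the non-snc locus. The pattern is the same as in the proof of \autoref{prop:guenanciass}.

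\emph{Reduction to the log-canonical model.} Since $K_Y+D$ is big and nef, it is semiample, and it induces the log-canonical morphism $q\colon (Y,D)\to (Y_\can,D_\can)$ with $K_Y+D=q^*H_\can$ and $H_\can$ ample. Choose $m$ sufficiently divisible and a general complete intersection curve $C_\can$ of members of $|mH_\can|$. By Mehta--Ramanathan / Flenner type restriction, in the form of \cite[Thm.~6.1]{CompositioPaper} or \cite[Thm.~7.2]{LangerCrelle}, we may arrange the following. The curve $C_\can$ avoids the codimension-two set consisting of the singular locus of $Y_\can$, the non-snc locus of $(Y_\can,D_\can)$, and the image of the $q$-exceptional locus. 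It meets $D_\can$ transversally at snc points. All sheaves involved are locally free along it. Let $C:=q^{-1}(C_\can)\cong C_\can$. This is a complete intersection curve for $m(K_Y+D)$ contained in the snc locus $(Y^\circ,D^\circ)$, and it meets $D$ transversally.

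\emph{Part (i).} Degrees of reflexive sheaves with respect to $K_Y+D$ are computed as degrees of their restrictions to $C$. First, $\mu(\mathscr E)=0$. On $C$, the restriction $\mathscr E|_C=E^\circ|_C$ is the graded piece of the Deligne extension of $\bV|_{C\setminus D}$. That Deligne extension has degree zero because the residues are nilpotent (\autoref{prop:deligneext} in the curve case). Moreover, $\deg E^{1,0}+\deg E^{0,1}=\deg$ of the Deligne extension.

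Next, let $\mathscr F\subset\mathscr E$ be a saturated log-Higgs subsheaf. Its restriction $\mathscr F|_C$ is a log-Higgs subsheaf of $(E^\circ,\theta^\circ)|_C$, with Higgs field in $\Omega^1_C(\log D\cap C)$ after composing with the natural restriction map. Since $\theta$ preserves the Hodge grading, we may replace $\mathscr F$ by its graded pieces. So the claim reduces to the classical statement for a VHS on the curve $C\setminus D$ with unipotent monodromy: Griffiths curvature negativity of the Hodge metric on Higgs-stable subbundles, together with Simpson's/Zucker's computation that the $L^2$ Hodge-metric degree agrees with the Deligne-extension degree. This gives $\deg(\mathscr F|_C)\le 0$.

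The only subtlety is whether restriction to a general $C$ keeps $\mathscr F|_C$ $\theta_C$-invariant, and it does, since invariance is a closed condition holding generically. Alternatively, one can apply \cite[Prop.~2.4]{VZ07} directly on a log resolution $(\tilde Y,\tilde D)$ that is an isomorphism over the snc locus. There one uses the pulled-back nef and big polarisation and the saturated extension of $\mathscr F$. Degrees computed against $C$ agree on both sides.

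\emph{Part (ii).} Suppose $\mu(\mathscr F)=0$ and $K_Y+D$ is ample on $U$. Then curves $C$ as above cover a dense open subset of $U_\reg$, and for each such curve $\mathscr F|_C$ has degree $0$ inside $E|_C$. By the curve-case equality statement (the second fundamental form of $\mathscr F$ in the Hodge metric vanishes), $\mathscr F|_{C\setminus D}$ comes from a sub-VHS of $\bV|_{C\setminus D}$, that is, it is flat for the Gauss--Manin connection. Flatness along a covering family of curves through general points, in all general directions, shows that the second fundamental form of $\mathscr F|_{U_\reg}$ vanishes. Hence $\mathscr F|_{U_\reg}$ is the Hodge-graded piece of a flat sub-local system $\bV'\subset\bV|_{U_\reg}$. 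Deligne semisimplicity, or the orthogonal complement with respect to the polarization, produces the complement $\mathscr W$. I expect this step to be the main obstacle: passing from degree-zero on curves to global flatness over $U_\reg$, where ampleness on $U$ is needed so that the curves move freely through $U$. If this becomes delicate, I would instead pull back to a log resolution, where $K+\tilde D$ is pulled back and ample on $U_\reg$, and invoke \cite[Prop.~2.4]{VZ07} there verbatim.

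\emph{Part (iii).} On an orbifold pair, $\bV'$ is an orbifold local system on $U$, since $\pi_1(U_\reg)\to$ local orbifold groups is compatible in codimension two. Its orbifold Deligne extension, \autoref{rem:orbifoldDeligneExtension}, yields orbifold bundles. The grading gives $\mathscr F$ as an orbifold direct summand, and this identification holds because the two reflexive sheaves agree in codimension one. Chern class vanishing then follows from \autoref{prop:deligneext}, since the Hodge filtration pieces of nilpotent-residue Deligne extensions have Chern classes determined as in the smooth case.
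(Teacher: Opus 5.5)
Your proposal is correct and follows essentially the same route as the paper. You compute degrees on complete intersection curves for a multiple of $K_Y+D$ that lie in the snc locus where $\mathscr{E}$, $\mathscr{F}$ and $\mathscr{E}/\mathscr{F}$ are locally free. This reduces (i) to Simpson's curve case. Part (ii) is Mochizuki's argument of propagating the curve-wise orthogonal splitting, and your insistence on complete intersection curves through general points in general directions (so that $\deg(\mathscr{F}|_C)=0$) is more precise than the paper's ``any smooth curve''. Part (iii) extends the sub-local system as an orbifold local system and uses functoriality of orbifold Deligne extensions.

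Passing through the log-canonical model to choose the curves is only cosmetic. The reduction to ``graded pieces'' in (i) is unnecessary, since Simpson's curve result already covers arbitrary Higgs subsheaves. If you keep it, it must mean the associated graded of $\mathscr{F}$ for the Hodge filtration, not $\bigoplus_p \mathscr{F}\cap\mathscr{E}^p$.
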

\begin{proof}
	 We argue as in the proof of \cite[Prop.~5.1]{mochizuki_asterisque}.
	 We first show (i). Let $Y'$ be intersection of the regular locus of $(Y,D)$ and the set of points  where $\mathscr{E}$ is a vector bundle. Note that $Y\setminus Y'$ has codimension at least two in $Y$.

	Since $H=K_Y +  D$ is big and nef, it is semiample by the log canonical version of the Basepoint-Free-Theorem, see \cite{FujinoNonvanishing}, and so we can find a smooth complete intersection curve $C$ without loss of generality for $H$ (and not some multiple)  such that $C$ intersects only the smooth part of $D$ transversally and such that $C$ lies in $Y'$. 
	If $\mathscr{F}$ is a proper saturated sub-Higgs sheaf $\mathscr{F}\subset \mathscr{E}$, consider the intersection $Y''$ of $Y'$ with the locus where $\mathscr{F}$ is a sub vector bundle of $\mathscr{E}$, which again has complement of codimension at least two. Then, we may adapt our choice of the  smooth complete intersection curve $C$ for $H$ so that it additionally lies in $Y''$.

	Then, $(\mathscr{E},\theta)|_{C}=(E^\circ,\theta^\circ)|_{C}$ is a system of log-Hodge bundles   with unipotent local monodromy at infinity on $C$ and $\mu(\mathscr{E})=\deg_{C}(\mathscr{E}|_{C})=0$. The inequality $\mu(\mathscr{F})\leq \mu(\mathscr{E})=0$ hence follows from the curve case proven in  \cite[Sec.~6]{Simpsoncurves}.
	
	Now assume that $K_Y + D$ is ample on $U$ as in (ii).  The Hodge metric over $U$  yields an orthogonal decomposition 
	\begin{equation}\label{eq:smoothsplit}\mathscr{E}|_{U''}=\mathscr{F}|_{U''}\oplus \mathscr{W}''\end{equation} as smooth complex vector bundles, where $U'':=Y''\setminus (Y''\cap D)$. Consider \emph{any} smooth curve $C \subset Y''$. Owing to the result in the smooth case, e.g. see \cite[Prop. 2.4]{VZ07}, applied over the curve $C$, the orthogonal decomposition is holomorphic when restricted to $C\cap U''$ and there indeed induced by a decomposition of variation of Hodge structures. This, as in Mochizuki's proof of \cite[Prop.~5.1]{mochizuki_asterisque}, implies that the decomposition \eqref{eq:smoothsplit} is a holomorphic decomposition of system of Hodge bundles over $U''$ and, a fortiori, induced by a splitting in the category of harmonic bundles. Since the complement $U\setminus U''$ is of codimension greater than two, and since $\mathscr{E}$ is a vector bundle on $U$, by reflexivity we can extend the holomorphic decomposition to all of $U$, obtaining
	\begin{equation}\label{eq:split_on_U}
	    \mathscr{E}|_U = \mathscr{F}|_U \oplus \mathscr{W}_U. 
	\end{equation}
    In particular, we have a splitting on the regular locus of $U$. This also implies that $U''$ coincides with $U_{\reg}$,  since $\mathscr{F}$ is a direct summand of the vector bundle $\mathscr{E}$ over $U$. 
	
	Consider now the situation of (iii), so that $(Y,D)$ is an orbifold pair. Then, the VHSs on $U_{\reg}$ found in (b) that induce $\mathscr{F}|_{U_{\reg}}$ and $\mathscr{W}|_{U_{\reg}}$ can be extended to (orbifold) local systems on $U$ inducing the vector bundles $\mathscr{F}|_U$ and  $\mathscr{W}|_U$ everywhere on $U$. The direct sum of these (orbifold) local systems by construction is the local system underlying $\mathbb{V}$. Analogous to the smooth case, taking Deligne extensions as in \autoref{rem:orbifoldDeligneExtension} is functorial, so that direct sums go to direct sums. Therefore, the splitting \eqref{eq:split_on_U} extends over $D$ to give a splitting $\mathscr{E} = \mathscr{F} \oplus \hat{\mathscr{W}} $, i.e., making $\mathscr{F}$ a direct summand of $\mathscr{E}$. As the orbifold Chern classes of $\mathscr{E}$ vanish by \autoref{prop:deligneext}, the orbifold Chern classes of $\mathscr{F}$ likewise vanish. 
\end{proof}

The previous result plus the recent generalisations of semistability properties to the singular setup described in \autoref{subsect:stability} taken together are enough to derive the Arakelov inequality in our context, as we will see next.

\begin{proof}[Proof of \autoref{intro:inequality}]
We claim that the proof in the smooth setup given in \cite[Thm. 6.4]{viehwegsurvey} works also in our more general situation, in which the pair $(Y,D)$ is log-canonical and the polarization $K_Y+D$ is only big and nef. The argument indeed consists of semistability considerations that we have already seen to generalise to the singular setup; more precisely, the argument is combining the following inputs (references to the singular versions are given in parentheses):
\begin{enumerate}
    \item existence of Harder-Narasimhan filtrations for projective  log canonical pairs with $K_X + D$ big and nef, which is used as a polarisation to measure semistability (\autoref{thm:HN}),
    \item the fact that (reflexive) tensor products of semistable sheaves are semistable (\autoref{thm:tensor_poly}),
    \item  the semistability of  $\Omega_Y^{[1]}(\log D) = \mathscr{T}_Y(- \log D)^\vee$ (\autoref{prop:guenanciass}), and
    \item the semistability of the log-Higgs sheaf $(\mathscr{E}
    ,\theta)$ associated to $\bV$ (part (1) of \autoref{thm:simpsonVHS_singular}). 
\end{enumerate}
The proof hence generalises to our setup, establishing the Arakelov inequality claimed in \autoref{intro:inequality}. 
\end{proof}

\section{The case of equality}
\label{sec:proof}
In this section we prove \autoref{thm:intro1}.  For this, 
recall our setup from \autoref{subsec:setting}.	Let $U$ be an irreducible non-singular projective variety of dimension $d\geq 2$ or a quasi-projective surface and $f : A\to U$ a family of principally polarized Abelian varieties of dimension $g$ such that $R^1f_*\underline{\bC}$ has unipotent local monodromies at infinity. Furthermore, in the quasi-projective surface case we suppose that the induced map $\varphi_f:U \to \cA_g$ is proper (i.e., non-extendable, see \autoref{subsect:nonextendable}) and generically finite.  

Possibly replacing $U$ by a finite étale cover, we may assume without loss of generality that the period map $\varphi_f$ associated with $f$ actually maps to a \emph{fine} moduli space $\A_g$ of principally polarised Abelian varieties with a suitable level structure, see the discussion in \cite[Sec. 2.2] {MVZ}.

\subsection{Setup and first observations}\label{assumptionsineq} By \autoref{intro:fact}, the statement of \autoref{thm:intro1} is implied by the corresponding statement on the model $(Y_c, D_c)$ constructed in \autoref{sec:factorization}. In particular, note that  \autoref{prop:propernesstominimal} and \autoref{prop:propernesstomodel} ensure that properness of the period map is preserved when we pass to the model $(Y_c, D_c)$. From now we therefore assume that $(Y, D)$ is a projective variety with canonical singularities for $D=\emptyset$ or a log-canonical orbifold surface pair for $D \not =\emptyset$ such that $K_Y+D$ is big, nef and ample on $U=Y\setminus D$. Moreover, there is a generically finite proper period map $\varphi: U\ra \cA_g$ to some fine moduli space of principally polarised Abelian varieties, inducing a VHS  $R^1f_*\underline{\bC}$ on $U$ with unipotent local monodromies at infinity and with corresponding Higgs bundle $(G= G^{1,0} \oplus G^{0,1}, \tau^{\circ\circ})$ on $U$.  

We will denote by $(\mathscr{G}=\mathscr{G}^{1,0}\oplus \mathscr{G}^{0,1},\tau)$ the associated log-Higgs sheaf on $Y$.  Note that $\mathscr{G}$ is locally free if $D=\emptyset$, since then the VHS is defined on $U=Y$. If $D\neq \emptyset$ and $(Y,D)$ is an orbifold pair, then $G$ is an orbifold vector bundle, i.e., it defines and is induced by a vector bundle on the smooth DM-stack $\mathcal{Y}$ having coarse moduli space $Y$, see \autoref{sec:deligneext}. 

The Higgs field $\tau: \mathscr{G}^{1,0} \otimes \mathscr{T}_Y(-\log D) \ra \mathscr{G}^{0,1}$ induces the Kodaira-Spencer map 
\begin{gather}\label{KS}
  \tau_{\mathrm{KS}}: \mathscr{T}_Y(-\log D) \longrightarrow \Sym^{[2]}(G^{0,1}) \subset G^{0,1} [\otimes] (G^{1,0})^{\vee}. 
\end{gather}

Since $\A_g$ is a fine moduli space, it tangent sheaf $\mathscr{T}_{\A_g}$ is locally free, associated with the tangent bundle of $\A_g$ which we will denote by $T_{\A_g}$. We have natural identifications of vector bundles  $\Sym^{[2]}(\mathscr{G}^{0,1})|_U  = \Sym^2(G^{0,1}) \cong \varphi^*(T_{\A_g})$ over $U$. Restricting $\tau_{\mathrm{KS}}$ to $U$ and applying this identification to the right hand side of \eqref{KS},  we see that $\tau_{\mathrm{KS}}$ coincides with the differential $d\varphi$ of the period map $\varphi$ between the tangent sheaf $\mathscr{T}_U$ of the (potentially singular) variety $U$ and the pullback of the tangent bundle of $\A_g$. Since $\varphi$ is generically finite and $\mathscr{T}_Y(-\log D)$ is by definition torsion free, $\tau_{\mathrm{KS}}$ is a generically injective and  hence injective  morphism of sheaves.

\subsection{Strategy of the proof} The proof of  \autoref{thm:intro1} is organised as follows: We start by considering the decomposition of $R^1f_*\mathbb{C}_A$ into irreducible
$\mathbb{C}$-variations of Hodge structures over $U$. This induces a corresponding decomposition of the logarithmic Higgs bundle into log-Higgs subbundles. By assumption, each irreducible non-unitary factor $\mathbb{V}$ gives rise to a log-Higgs sheaf $(\mathscr{E}=\mathscr{E}^{1,0}\oplus \mathscr{E}^{0,1},\theta)$ satisfying the Arakelov equality. For each such non-unitary factor, we show in \autoref{prop:injective} that the stability assumption for all $\Sym^{[m]}\bigl(\Omega^{[1]}_{Y}(\log D)\bigr)$ together with the Arakelov equality implies that iterated Higgs fields are  injective. 
As a consequence, we obtain a length inequality  for each non-unitary factor in \autoref{prop:directfactor}.  In \autoref{prop:directI} we show that having equality in this inequality implies that the tangent sheaf $\mathscr{T}_Y$ is a direct summand of $\mathscr{H}\negthinspace om(\mathscr{E}^{1,0}, \mathscr{E}^{0,1})$ on $U$ for each irreducible non-unitary factor.

In the compact case, this already allows us to conclude the argument: having even a single non-unitary factor with this splitting property implies that $U=Y$ is smooth, so that we have reduced ourselves to the compact case of \cite{MVZ} and hence can conclude.

In the (two-dimensional) non-compact case, we proceed in a slightly different way, partly reversing the order of arguments in \cite{MVZ} and filling in some details: First, we show that over $U$ the injective differential of the period map splits not only holomorphically, but orthogonally with respect to the Hermitian metric associated with the Siegel metric on $\mathcal{A}_g$, see \autoref{prop:orthsplit}. On the level of universal covers, this implies  that the period map has a Hermitian symmetric subspace of Siegel space as its image, see \autoref{totgeoemb}. This subspace can only be a bidisc or the two-ball. The proof of  \autoref{thm:intro1} is completed by excluding the first case using the stability assumption on the symmetric powers of the log-cotangent bundle. The reader is refereed to \autoref{subsect:problemsinMVZ} for a comparison with the route taken in \cite[Sect.~7]{MVZ}.

\subsection{Analysis of iterated Higgs fields}

Let $\mathbb{V} \subset \mathbb{W}$ be a non-unitary irreducible
$\mathbb{C}$-sub-local system of $R^1f_*\underline{\bC}$, and let $(\mathscr{E}=\mathscr{E}^{1,0}\oplus \mathscr{E}^{0,1},\theta)$ be the associated system of log-Higgs sheaves on $Y$, as introduced in \autoref{sec:deligneext}. Let
\[(h,h'):=(\rk(\mathscr{E}^{1,0}), \rk(\mathscr{E}^{0,1}))\] and consider the saturated Higgs subsheaf $\langle \det(\mathscr{E}^{1,0})\rangle_s\subset \bigwedge^{[h]}(\mathscr{E},\theta)$
generated by $\det(\mathscr{E}^{1,0})$; this means that
\[\langle \det(\mathscr{E}^{1,0})\rangle_s^{h-m,m}=\Im_s\bigl(\theta^{(m)}\bigr)\]
is the saturated hull of the image of the iterated Higgs field $\theta^{(m)}$ as defined in \autoref{sec:deligneext}. Now note that the length defined in \autoref{def:lenght2} is unchanged if one replaces $\langle \det(\mathscr{E}^{1,0}) \rangle$ by its saturation. Indeed, both
coincide on a Zariski open dense subset, and the length depends only on the
generic non-vanishing of the iterated Higgs fields. Hence, we obtain 
\begin{equation}\label{eq:saturated_length}
\zeta(\mathscr{E},\theta)
=\max\{m\in \bN \mid \langle\det(\mathscr{E}^{1,0})\rangle_s^{h-m,m}\neq 0\}.
\end{equation}

The following observation (motivated by the discussion in \cite[Sect.~7.2]{MVZ})  concerning the behaviour of iterated Higgs fields in relation to length will be a key ingredient for the subsequent arguments. 

\begin{prop}[Injectivity of iterated Higgs fields]\label{prop:injective}
Under the assumptions made in \autoref{assumptionsineq}, let $\mathbb{V} \subset \mathbb{W}$ be a non-unitary irreducible $\mathbb{C}$-sub-local system of $R^1f_*\underline{\bC}_A$, and let $(\mathscr{E}=\mathscr{E}^{1,0}\oplus \mathscr{E}^{0,1},\theta)$ the associated Higgs sheaf on $Y$. 
If $\Sym^{[m]}\bigl(\Omega^{[1]}_{Y}(\log D)\bigr)$  is slope-stable with respect to $K_{Y} + D$ for all $m \geq 1$, then the Arakelov equality  $\mu(\mathscr{E}^{1,0})-\mu(\mathscr{E}^{0,1}) = \mu\bigl(\Omega^{[1]}_{Y}(\log D)\bigr)$ implies that the iterated Higgs field
\[
\theta^{(m)}: \det(\mathscr{E}^{1,0})[\otimes] \Sym^{[m]}\bigl(\mathscr{T}_Y(-\log D)\bigr) \, \longrightarrow \, \bigwedge^{[h-m]}\mathscr{E}^{1,0}\,[\otimes]\, \bigwedge^{[m]}\mathscr{E}^{0,1}
\]
is a morphism between semistable sheaves of the same slope. Moreover, $\theta^{(m)}$ is injective for all $m \le \zeta(E,\theta)$ .
\end{prop}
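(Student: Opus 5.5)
The plan is to proceed in three steps: establish semistability of $\mathscr{E}^{1,0}$ and $\mathscr{E}^{0,1}$ from the Arakelov equality, compare the slopes of source and target of $\theta^{(m)}$, and then use stability of the symmetric powers to force injectivity.

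\textbf{Step 1: semistability of source and target.} By the equality part of \autoref{intro:inequality}, the Arakelov equality $\mu(\mathscr{E}^{1,0})-\mu(\mathscr{E}^{0,1})=\mu(\Omega^{[1]}_Y(\log D))$ implies that $\mathscr{E}^{1,0}$ and $\mathscr{E}^{0,1}$ are both semistable with respect to $K_Y+D$. We apply \autoref{thm:tensor_poly} to these sheaves and to $\mathscr{T}_Y(-\log D)$; the latter is semistable by \autoref{prop:guenanciass}, and in fact stable, being dual to the stable sheaf $\Omega^{[1]}_Y(\log D)$ (the case $m=1$ of the hypothesis). This shows that the source $\det(\mathscr{E}^{1,0})[\otimes]\Sym^{[m]}\mathscr{T}_Y(-\log D)$ and the target $\bigwedge^{[h-m]}\mathscr{E}^{1,0}[\otimes]\bigwedge^{[m]}\mathscr{E}^{0,1}$ are semistable.

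\textbf{Step 2: slope comparison.} Slopes are additive under reflexive tensor products; this can be checked on a complete intersection curve in the locus where everything is locally free, as in the proofs of \autoref{thm:tensor_poly} and \autoref{prop:guenanciass}. Write $\mu_1=\mu(\mathscr{E}^{1,0})$, $\mu_0=\mu(\mathscr{E}^{0,1})$ and $\mu_\Omega=\mu(\Omega^{[1]}_Y(\log D))$. The source then has slope $h\mu_1-m\mu_\Omega$, and the target has slope $(h-m)\mu_1+m\mu_0$. Their difference is $m(\mu_1-\mu_0-\mu_\Omega)$, which vanishes exactly by the Arakelov equality. This proves the first assertion.

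\textbf{Step 3: injectivity for $m\le\zeta$.} For $m\le\zeta(\mathscr{E},\theta)$ the map $\theta^{(m)}$ is nonzero, so its kernel $\mathscr{K}$ is a proper saturated subsheaf of the source. We use the following standard fact. Let $\beta$ be a nonzero morphism between semistable sheaves of equal slope $\mu$. The image is a quotient of the source, so its slope is at least $\mu$. The image is also a subsheaf of the target, so its slope is at most $\mu$. Hence the image has slope exactly $\mu$, and so does the kernel, if it is nonzero, by additivity of degrees (cf.\ \autoref{lem:addendum}).

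Now the source is a twist of $\Sym^{[m]}\mathscr{T}_Y(-\log D)$ by the rank-one reflexive sheaf $\det(\mathscr{E}^{1,0})$. Its stability follows from the hypothesis: the dual of a stable reflexive sheaf is stable, so $\Sym^{[m]}\mathscr{T}_Y(-\log D)$, being dual to the stable sheaf $\Sym^{[m]}\Omega^{[1]}_Y(\log D)$, is stable, and stability is preserved under twisting by a rank-one reflexive sheaf. Both dualization and twisting can be checked in codimension one or on a general complete intersection curve. A nonzero proper kernel of slope $\mu$ would contradict this stability. Hence $\mathscr{K}=0$ and $\theta^{(m)}$ is injective.

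\textbf{Main obstacle.} The delicate points all concern the singular setting with a polarization that is only big and nef. One must justify additivity of slopes for reflexive tensor operations. One must also check that stability passes to duals and to rank-one twists, and that the kernel/image slope argument works with saturations. I would handle all of these uniformly by restricting to a general complete intersection curve of $|m(K_Y+D)|$. Such a curve lies in the locus where all the sheaves are locally free and where the log-canonical morphism is an isomorphism, which reduces every statement to classical facts about bundles on curves.
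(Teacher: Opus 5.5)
Your proposal is correct and follows the paper's proof essentially step by step. Semistability of $\mathscr{E}^{1,0},\mathscr{E}^{0,1}$ comes from the equality case of the Arakelov inequality, and tensor operations then give semistability and equal slopes for the source and target. Injectivity follows from stability of $\det(\mathscr{E}^{1,0})[\otimes]\Sym^{[m]}\mathscr{T}_Y(-\log D)$; the paper phrases the contradiction as the image having slope strictly larger than the target's, against semistability of the target, which is equivalent to your kernel-slope argument.

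You also explicitly pass from stability of $\Sym^{[m]}\Omega^{[1]}_Y(\log D)$ to stability of its dual, a step the paper glosses over. That step should be justified by the direct dualisation argument, not by restriction to a general complete intersection curve, since such a restriction need not preserve stability.
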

\begin{proof}
Recall from \autoref{intro:inequality} that $\mathscr{E}^{1,0}$ and $\mathscr{E}^{0,1}$ are both semistable. Moreover, $\mathscr{T}_Y(-\log D)$ is stable, and hence semistable by assumption. Since by \autoref{thm:tensor_poly} the reflexive tensor product of semistable sheaves is semistable, the sheaf $E^{1,0}[\otimes]\mathscr{T}_Y(-\log D)$
is semistable. Therefore,  the Arakelov equality implies that (the reflexive extension of) the Higgs field 
\[\theta: \mathscr{E}^{1,0}[\otimes]\mathscr{T}_Y(-\log D) \to \mathscr{E}^{0,1}\]
is a morphism between semistable sheaves of the same slope. Similarly, since reflexive tensor products, Homs, reflexive symmetric powers and reflexive wedge products of semistable sheaves is semistable by \autoref{thm:tensor_poly} and slope is additive under tensor product, the Arakelov equality implies 
\[\mu\bigl(\bigwedge^{[h-m]}\mathscr{E}^{1,0}[\otimes] \bigwedge^{[m]}\mathscr{E}^{0,1}\bigr)=m\mu\bigl(\mathscr{T}_Y(-\log D)\bigr)+\mu\bigl(\det(\mathscr{E}^{1,0})\bigr), \] and 
thus the iterated Higgs field $\theta^{(m)}$ is a morphism between semistable sheaves of the same slope as well. Moreover, by definition, it is nonzero if and only if $m \le \zeta(\mathscr{E},\theta)$. Set $\mathscr{F}_m:=\det(\mathscr{E}^{1,0}) [\otimes] \Sym^{[m]}\bigl(\mathscr{T}_Y(-\log D)\bigr)$ and consider the exact sequence
\begin{gather}\label{eq:exact}
  0 \longrightarrow \mathscr{K}_m \longrightarrow
\mathscr{F}_m
\overset{\theta^{(m)}}{\longrightarrow}
\langle \det(\mathscr{E}^{1,0})\rangle^{h-m,m} \longrightarrow 0, 
\end{gather}
where $\mathscr{K}_m := \ker(\theta^{(m)})$. We show that if $m \le \zeta(\mathscr{E},\theta)$, then $\mathscr{K}_m = 0$. Indeed, suppose that
$\mathscr{K}_m \neq 0$. Then $\mathscr{K}_m$ is a proper subsheaf of $\mathscr{F}_m$. By assumption $\Sym^{[m]}\bigl(\Omega^1_{Y}(\log D)\bigr)$  is stable for all $m$, so, as the same is true for $\mathscr{F}_m$, since $\det(\mathscr{E}^{1,0})$ has rank one. Hence,
$\mu(\mathscr{K}_m) < \mu(\mathscr{F}_m)$. From \autoref{eq:exact} we then infer that \[\mu(\langle \det(\mathscr{E}^{1,0})\rangle^{h-m,m}) > \mu(\mathscr{F}_m).\] This, however, contradicts the semistability of $\bigwedge^{[h-m]}E^{1,0}[\otimes] \bigwedge^{[m]}E^{0,1}$. We conclude that $\mathscr{K}_m=0$, so that  $\theta^{(m)}$ is an injective sheaf morphism for $m \le \zeta(E,\theta)$.
\end{proof}

\subsection{The length inequality}

Now the first step of the proof of \autoref{thm:intro1} is the following result.

\begin{prop}[Length (in)equality, cf.~{\cite[Lem.~7.2]{MVZ}}]\label{prop:directfactor}
Under the assumptions made in \autoref{assumptionsineq}, let $\mathbb{V}$ be a non-unitary irreducible $\mathbb{C}$-sub-local system of $R^1f_*\underline{\bC}_A$, and $(\mathscr{E}=\mathscr{E}^{1,0}\oplus \mathscr{E}^{0,1},\theta)$ the associated Higgs sheaf on $Y$. 
Assume furthermore that $\Sym^{[m]}\bigl(\Omega^1_{Y}(\log D)\bigr)$  is slope-stable with respect to $K_{Y} + D$ for all $m$ and that the Arakelov equality  $\mu(\mathscr{E}^{1,0})-\mu(\mathscr{E}^{0,1}) = \mu\left(\Omega^1_{Y}(\log D)\right)$ holds. Then, the following hold: 

\begin{enumerate}
    \item[(i)] The length fulfils the \emph{inequality}
	\begin{equation}\zeta(\mathscr{E},\theta)\geq\frac{hh'(d+1)}{(h+h')d} ,\end{equation}
	in which  equality is attained if and only if $\langle \det(E^{1,0})\rangle_s$ is a system of Hodge bundles direct summand of $\bigwedge^{[h]}(\mathscr{E},\theta)$ on $U_\reg$, where the latter one coincides with the system of Hodge bundles $\bigwedge^{h}(E^\circ,\theta^\circ)$.
	\item[(ii)]	If moreover $(Y,D)$ is an orbifold surface pair, then \begin{equation}\widehat{c}_1(\langle \det(E^{1,0})\rangle_s)=0 \;\;\text{ and } \;\; \widehat{c}_2(\langle \det(E^{1,0})\rangle_s)=0. \end{equation}
\end{enumerate}
\end{prop}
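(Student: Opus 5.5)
The plan is to compute the slope of the saturated Higgs subsheaf $\mathscr{L}:=\langle \det(\mathscr{E}^{1,0})\rangle_s\subset\bigwedge^{[h]}(\mathscr{E},\theta)$ in two ways. The upper bound comes from semistability of the log-Higgs sheaf; the exact value comes from the injectivity in \autoref{prop:injective}.

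Write $\mu:=\mu\bigl(\Omega^{[1]}_Y(\log D)\bigr)$ and $\zeta:=\zeta(\mathscr{E},\theta)$. The Hodge pieces of $\mathscr{L}$ are $\mathscr{L}^{h-m,m}=\Im_s(\theta^{(m)})$ for $0\le m\le\zeta$. By \autoref{prop:injective}, each $\theta^{(m)}$ is an injective morphism between semistable sheaves of the same slope. \autoref{lem:addendum} then shows that its saturated image has the same rank and slope as $\mathscr{F}_m=\det(\mathscr{E}^{1,0})[\otimes]\Sym^{[m]}\mathscr{T}_Y(-\log D)$. Hence
\[
\rk\mathscr{L}^{h-m,m}=\binom{m+d-1}{d-1},\qquad \mu\bigl(\mathscr{L}^{h-m,m}\bigr)=\mu(\det\mathscr{E}^{1,0})-m\mu .
\]
Since $\mu(\mathscr{E})=0$, we have $\mu(\mathscr{E}^{1,0})h+\mu(\mathscr{E}^{0,1})h'=0$. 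Combined with the Arakelov equality this gives $\mu(\det\mathscr{E}^{1,0})=h\mu(\mathscr{E}^{1,0})=\frac{hh'}{h+h'}\mu$. Summing over $m$ and using $\sum_{m=0}^{\zeta}m\binom{m+d-1}{d-1}=\frac{d\zeta}{d+1}\binom{\zeta+d}{d}$ yields
\[
\mu(\mathscr{L})=\Bigl(\frac{hh'}{h+h'}-\frac{d\zeta}{d+1}\Bigr)\mu .
\]

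\autoref{thm:simpsonVHS_singular}(i) applies to $\bigwedge^{[h]}(\mathscr{E},\theta)$, which is the reflexive extension of the Higgs bundle of the VHS $\bigwedge^h\mathbb{V}$ and has unipotent monodromies. It gives $\mu(\mathscr{L})\le 0$. Since $\mu=(K_Y+D)^d/d>0$, this is exactly $\zeta\ge\frac{hh'(d+1)}{(h+h')d}$, and equality holds iff $\mu(\mathscr{L})=0$. Part (ii) of the same theorem applies because $K_Y+D$ is ample on $U$. It shows that $\mu(\mathscr{L})=0$ forces $\mathscr{L}$ to be a direct summand of the system of Hodge bundles over $U_\reg$, induced by a splitting of the VHS. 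Conversely, a sub-VHS summand has Deligne extension of degree zero, at least after checking on a general complete intersection curve as in the proof of \autoref{thm:simpsonVHS_singular}(i). This gives $\mu(\mathscr{L})=0$ and hence equality. I expect the main care to be needed here: one must check that $\mathscr{L}$ is saturated and Higgs-stable, which holds because the images of the $\theta^{(m)}$ are compatible with $\theta$, and that "direct summand on $U_\reg$" really identifies with $\bigwedge^h(E^\circ,\theta^\circ)$ there.

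For (ii), take $(Y,D)$ an orbifold surface pair. I read the vanishing of $\widehat{c}_1$ and $\widehat{c}_2$ as the conclusion in the equality case, the one that feeds \autoref{prop:directI}. It follows from \autoref{thm:simpsonVHS_singular}(iii): the splitting extends over $Y$, so $\mathscr{L}$ is an orbifold direct summand of the Deligne extension of $\bigwedge^h\mathbb{V}$. All its orbifold Chern classes then vanish by \autoref{prop:deligneext}.
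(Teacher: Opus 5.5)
Your proposal is correct and follows the paper's own argument. You compute the slope of $\langle\det(\mathscr{E}^{1,0})\rangle_s$ piece by piece from the injectivity of the $\theta^{(m)}$ (\autoref{prop:injective}) and \autoref{lem:addendum}, bound it by the Higgs semistability of \autoref{thm:simpsonVHS_singular}(i), and obtain the splitting and the Chern class vanishing from parts (ii) and (iii) of that theorem. Your reading of (ii) as conditional on the length equality is right: the paper's proof needs it too, since \autoref{thm:simpsonVHS_singular}(iii) presupposes equality in \eqref{eq:muinequ}. Your degree-zero argument for the converse of the ``if and only if'' supplies a step the paper leaves implicit.
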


\medskip

\begin{proof}
For the proof, we set $\mathscr{F}^{h-m,m}:=\langle \det(\mathscr{E}^{1,0})\rangle_s^{h-m,m}$, 
so that $\langle \det(\mathscr{E}^{1,0})\rangle_s=\bigoplus_{m=0}^{\zeta(\mathscr{E},\theta)}\mathscr{F}^{h-m,m}$. By \autoref{lem:addendum}, we have
\begin{align}\label{slopeF}
\mu\bigl(\mathscr{F}^{h-m,m}\bigr)&=(h-m)\mu(\mathscr{E}^{1,0})+m\mu(\mathscr{E}^{0,1}). \end{align}
Since $\c_1(\mathscr{E}^{1,0})+\c_1(\mathscr{E}^{0,1})=0$, we have that $h\mu(\mathscr{E}^{1,0})+h'\mu(\mathscr{E}^{0,1})=0$. Thus, the Arakelov equality implies 
\begin{gather}\label{eq:arakelovbis}
  -\mu\bigl(\mathscr{T}_Y(-\log D)\bigr)=\mu(\mathscr{E}^{1,0})-\mu(\mathscr{E}^{0,1})=\frac{h+h'}{h'}\mu(\mathscr{E}^{1,0}).  
\end{gather}
Putting equations \autoref{slopeF} and \autoref{eq:arakelovbis} together we obtain 
 \begin{align*}
 	\mu\bigl(\mathscr{F}^{h-m,m} \bigr)&=m\mu\bigl(\mathscr{T}_Y(-\log D)\bigr)+h\mu(\mathscr{E}^{1,0})\\
 	&=\left(m-\frac{hh'}{h+h'}\right)\mu\bigl(\mathscr{T}_Y(-\log D)\bigr).
 \end{align*}
 Note in addition that 
 \[\mu(\langle \det(\mathscr{E}^{1,0})\rangle_s) =\frac{1}{\rk\left(\langle \det(\mathscr{E}^{1,0})\rangle_s\right)}\sum_{m=0}^{\zeta(\mathscr{E},\theta)} \mu\left(\mathscr{F}^{h-m,m} \right)\rk\left(\mathscr{F}^{h-m,m} \right).\]
 Moreover, from \autoref{prop:injective}, we obtain 
 $$\rk\left(\mathscr{F}^{h-m,m} \right)= \rk\left(\Sym^{[m]}(\mathscr{T}_Y(-\log D))\right)=\binom{d+m-1}{m}.$$
Since $\langle \det(\mathscr{E}^{1,0})\rangle_s$ is a saturated Higgs subsheaf of  $\bigwedge^{[h]}(\mathscr{E},\theta)$,  part (i) of \autoref{thm:simpsonVHS_singular}  implies that $\mu(\langle \det(\mathscr{E}^{1,0})\rangle_s)\leq 0$. Since $\Omega^{[1]}_Y(\log D)$ has positive degree, we see by putting the above observations together that
\begin{align*}
0 &\geq \frac{\mu(\langle \det(E^{1,0})\rangle_s)}{\mu(\Omega^1_Y(\log(D)))}=\frac{1}{\rk(\langle \det(E^{1,0})\rangle)_s}\sum_{m=0}^{\zeta(E,\theta)}\left(\frac{hh'}{h+h'}-m\right)\binom{d+m-1}{m}\\
&=\frac{\zeta(E,\theta)+1}{\rk(\langle \det(E^{1,0})\rangle)_s} \binom{\zeta(E,\theta)+d}{\zeta(E,\theta)+1}\left(\frac{hh'}{d(h+h')}-\frac{\zeta(E,\theta)}{d+1}\right)
\end{align*}
which gives the desired inequality by considering the last factor. 

In case of equality, part (ii) of \autoref{thm:simpsonVHS_singular} implies that $\langle \det(E^{1,0})\rangle_s$ is a direct factor of $\bigwedge^{[h]}(E,\theta)$ on $U_\reg$ in the category of system of Hodge bundles.

The statement made in part (ii)  about vanishing of the orbifold Chern classes follows from part (iii) of  \autoref{thm:simpsonVHS_singular}. 
\end{proof}

The next statement provides the main step for passing from the singular setup---necessarily considered here as a result of running the appropriate minimal model programs---to the smooth setup.  

\begin{prop}[Tangent is direct factor, cf.~{\cite[Prop.~4.2]{VZ07}}]\label{prop:directI}
In addition to the assumptions made in \autoref{assumptionsineq}, assume that $\Sym^{[m]}\bigl(\Omega^1_{Y}(\log D)\bigr)$  is slope-stable with respect to $K_{Y} + D$ for all $m$. Let $\mathbb{V}$ be a non-unitary irreducible $\mathbb{C}$-sub-local system of $R^1f_*\underline{\bC}_A$. Let $(\mathscr{E}=\mathscr{E}^{1,0}\oplus \mathscr{E}^{0,1},\theta)$ be the associated log-Higgs bundle on $Y$. If $(\mathscr{E}, \theta)$ satisfies both the Arakelov equality  $$\mu(\mathscr{E}^{1,0})-\mu(\mathscr{E}^{0,1}) = \mu\left(\Omega^{[1]}_{Y}(\log D)\right)$$ and the length equality $$\zeta(\mathscr{E},\theta)=\frac{hh'(d+1)}{(h+h')d},$$ then the tangent sheaf $\mathscr{T}_U = \mathscr{T}_Y(-\log D)|_U$ is a direct summand of the vector bundle $\Hom(E^{1,0}, E^{0,1})$ over $U$. The corresponding splitting is orthogonal for the Hermitian form induced by the Hodge metric over $U_\reg$. 
\end{prop}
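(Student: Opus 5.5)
We follow the strategy of \cite[Prop.~4.2]{VZ07}, using \autoref{prop:directfactor} to supply the required splitting in our singular setting. Write $\mathscr{F}:=\langle \det(\mathscr{E}^{1,0})\rangle_s$. By the length equality and part (i) of \autoref{prop:directfactor}, $\mathscr{F}$ is a direct summand of $\bigwedge^{[h]}(\mathscr{E},\theta)$ on $U_\reg$ in the category of systems of Hodge bundles. By part (ii) of \autoref{thm:simpsonVHS_singular}, this splitting comes from a splitting of variations of Hodge structures. Hence it is orthogonal for the Hodge metric, and the complement $\mathscr{W}$ is preserved by the Higgs field.

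The first step is to extract the graded piece of type $(h-1,1)$. On $U_\reg$ we have $\bigwedge^{h}E$ in bidegree $(h-1,1)$ equal to $\bigwedge^{h-1}E^{1,0}\otimes E^{0,1}$. Since $E^{1,0}$ has rank $h$, this is $\det(E^{1,0})\otimes (E^{1,0})^\vee\otimes E^{0,1}$, that is, $\det(E^{1,0})\otimes\Hom(E^{1,0},E^{0,1})$.

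By \autoref{prop:injective} with $m=1$, the $(h-1,1)$-component of $\mathscr{F}$ is the image of the injective map
\[
\theta^{(1)}\colon \det(\mathscr{E}^{1,0})[\otimes]\mathscr{T}_Y(-\log D)\longrightarrow \det(\mathscr{E}^{1,0})[\otimes]\mathscr{H}\negthinspace om(\mathscr{E}^{1,0},\mathscr{E}^{0,1}).
\]
Here we use $\zeta\geq 1$, which holds because $\mathbb{V}$ is non-unitary. After twisting by $\det(\mathscr{E}^{1,0})^{-1}$, this map is the map $\mathscr{T}_Y(-\log D)\to \mathscr{H}\negthinspace om(\mathscr{E}^{1,0},\mathscr{E}^{0,1})$ induced by $\theta$.

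The next step is to identify the image with the saturation. We compare $\Im(\theta^{(1)})$ with $\mathscr{F}^{h-1,1}=\Im_s(\theta^{(1)})$. By \autoref{lem:addendum}, both sheaves are semistable of the same slope, and the inclusion $\Im\to\Im_s$ is an isomorphism on $U$, because $K_Y+D$ is ample on $U$. Therefore, on $U$, the sheaf $\det(E^{1,0})\otimes\mathscr{T}_U$ maps isomorphically onto $\mathscr{F}^{h-1,1}|_U$. The splitting of $\bigwedge^{h}E$ respects the bigrading, so it restricts to a splitting of the $(h-1,1)$-pieces on $U_\reg$. Twisting back by $\det(E^{1,0})^{-1}$, which is an isometric operation up to a line-bundle metric, then exhibits $\mathscr{T}_U$ as an orthogonal direct summand of $\Hom(E^{1,0},E^{0,1})$ over $U_\reg$.

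The final step extends this from $U_\reg$ to all of $U$. The splitting of the VHS on $U_\reg$ extends as in \eqref{eq:split_on_U}: on $U$ the sheaf $\bigwedge^{h}E$ is locally free and the complement of $U_\reg$ has codimension two. Reflexivity then extends the projector onto the summand $\mathscr{F}^{h-1,1}|_U\cong\det(E^{1,0})\otimes\mathscr{T}_U$, so $\mathscr{T}_U$ is a direct summand of the vector bundle $\Hom(E^{1,0},E^{0,1})$ on $U$; in particular $\mathscr{T}_U$ is locally free there.

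The main obstacle is precisely this point: making sure that the summand on $U$ is $\mathscr{T}_U$ itself, and not merely its saturation in $\Hom(E^{1,0},E^{0,1})$. The isomorphism $\Im\cong\Im_s$ on $U$ handles this at the level of sheaves. One must still check that the reflexive sheaf $\mathscr{T}_Y(-\log D)|_U$ agrees with $\det(E^{1,0})^{-1}\otimes \mathscr{F}^{h-1,1}|_U$ near singular points. We would argue this using that both sheaves are reflexive and agree off codimension two.
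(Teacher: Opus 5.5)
Your proposal is correct and follows essentially the same route as the paper. You take the $(h-1,1)$-piece of the Hodge-bundle splitting of $\langle\det(\mathscr{E}^{1,0})\rangle_s$ from \autoref{prop:directfactor}, extend it over $U$ by reflexivity, and then use injectivity of $\theta^{(1)}$ together with \autoref{lem:addendum} to identify the saturated image with $\mathscr{T}_U$ on all of $U$. The paper applies the lemma directly to $\tau_{\mathrm{KS}}$ rather than to the twisted map $\theta^{(1)}$, which is equivalent. The "obstacle" you flag at the end is already settled by that image-equals-saturation isomorphism on $U$, since it holds at singular points as well.
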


\begin{proof}
By part (i) of \autoref{prop:directfactor}, on the regular locus of $U$ we have that $\langle \det(E^{1,0})\rangle_s$ is a  system of Hodge bundles summand of $\bigwedge^{h}(E,\theta)$. Therefore, in each of the $(p,q)$-pieces the corresponding piece of $\langle \det(E^{1,0})\rangle_s$ has holomorphic orthogonal complement. By reflexivity, we can extend the corresponding holomorphic decomposition to all of $U$. In particular, we have that the saturated image of  $\theta^{(1)}$, which is the $(h-1,1)$-piece of $\langle \det(E^{1,0})\rangle_s$,  is a direct summand of the $(h-1,1)$-piece of $\bigwedge^{h}(E,\theta)$, with holomorphic orthogonal complement over $U_\reg$.
 This implies  that the saturated image of
\[\tau_{\mathrm{KS}}: \theta^{(1)}\otimes \det(\mathscr{E}^{1,0})^\vee: \mathscr{T}_Y(-\log D)\to \mathscr{H}\negthinspace om(E^{1,0},E^{0,1})\]
 is a direct summand over $U$, with holomorphic orthogonal complement over $U_\reg$.

 Furthermore, since $\mathscr{T}_Y(-\log D)$  and $\mathscr{H}\negthinspace om(E^{1,0},E^{0,1})$ are $\mu$-semistable sheaves on $Y$ that by the Arakelov equality have the same slope,  \autoref{lem:addendum} applies to show that the saturated image of $\tau_{\mathrm{KS}}$ is isomorphic to $\mathscr{T}_U = \mathscr{T}_Y(-\log D)|_U$ over $U$. Since we already saw in the first step of the proof that over $U$ this saturated image is a direct summand of $\mathscr{H}\negthinspace om(\mathscr{E}^{1,0}, \mathscr{E}^{0,1})|_U = \Hom(E^{1,0},E^{0,1})$  with holomorphic orthogonal complement over $U_\reg$, the claim follows.
\end{proof}

\subsection{The compact case}

We now prove \autoref{thm:intro1} in case that $D = \emptyset$. This is achieved  by a reduction to the smooth case, see \autoref{prop:MVZnonef}. 

\begin{proof}[Proof of \autoref{thm:intro1} (compact case)] 
We now consider the case where $D=\emptyset$ and $Y=U$ has canonical singularities. We want to show that having both the Arakelov  and length \emph{equality} for each non-unitary summand in $R^1f_*\underline{\bC}_A$ implies that $\varphi(Y)$ is a totally geodesically embedded ball quotient. 

Let $\mathbb{V}$ be a such a non-unitary irreducible $\mathbb{C}$-sub-local system of $R^1f_*\underline{\bC}_A$, and $(E=E^{1,0}\oplus E^{0,1},\theta)$ the associated Higgs bundle on $Y$; note that in this case $\mathscr{E}= E$ is a vector bundle on $Y$, since the VHS is defined on the whole of $Y=U$. In particular, also $\mathscr{H}\negthinspace om(\mathscr{E}^{1,0}, \mathscr{E}^{0,1}) = \Hom(E^{1,0}, E^{0,1})$ is a vector bundle on $Y$. By \autoref{prop:directI} $\mathscr{T}_Y = \mathscr{T}_U$ is a direct factor of $\Hom(E^{1,0},E^{0,1})$. As a consequence, $\mathscr{T}_Y$ is locally free on everywhere on $Y$, which implies that $Y$ is smooth by the positive solution of the Zariski-Lipman conjecture for klt spaces, see \cite[Thm.~6.1]{GKKP} or \cite[Thm.~3.8]{Druel}. At this point, $Y=U$ satisfies the assumptions of the compact version of \cite[Thm.~0.2]{MVZ} discussed in \autoref{prop:MVZnonef}, so that the proof is complete. 
\end{proof}

\subsection{The non-compact case}\label{subsect:noncompact_case}

From now on we will focus on the case where $(Y, D)$ is a log-canonical orbifold surface pair and $D \not =\emptyset$.

\begin{prop}[Tangent is direct factor II, cf.~{\cite[Rem.~4.4]{VZ07}}]\label{prop:orthsplit}
The assumptions in \autoref{thm:intro1} imply that $U$ is smooth and the differential of the period map
\[
d\varphi: T_U=\mathscr{T}_Y(-\log D)|_U \hookrightarrow \varphi^*T_{\A_g}.
\]
splits holomorphically and orthogonally with respect to the (Hermitian metric induced by the)  Siegel metric. In particular, the period map $\varphi:U\ra \A_g$ is an immersion. 
\end{prop}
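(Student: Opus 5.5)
The plan is to feed the splittings of \autoref{prop:directI} for the individual non-unitary factors into the decomposition of the Higgs bundle of the full VHS. First decompose $R^1f_*\underline{\bC} = \bU \oplus \bigoplus_i \bV_i$ with $\bV_i$ irreducible and non-unitary, as in \eqref{eq:main}. The Higgs sheaf then splits accordingly, $\mathscr{G} = \mathscr{P}\oplus\bigoplus_i \mathscr{E}_i$. The Higgs field on the unitary part vanishes, so $\tau_{\mathrm{KS}}$ factors through $\bigoplus_i \mathscr{H}\negthinspace om(\mathscr{E}_i^{1,0},\mathscr{E}_i^{0,1})$. Over $U$ this is an orthogonal (for the Hodge metric) direct summand of $\mathscr{H}\negthinspace om(G^{1,0},G^{0,1})$, and hence of $\Sym^2(G^{0,1})\cong\varphi^*T_{\A_g}$. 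The Siegel metric on $T_{\A_g}$ is, up to a constant, the metric induced by the Hodge metric on $\Hom(G^{1,0},G^{0,1})$ restricted to symmetric tensors, so orthogonality in the Hodge sense gives orthogonality for the Siegel metric.

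Second, smoothness of $U$. Fix one non-unitary factor $\bV=\bV_1$, which exists since $\varphi$ is generically finite. By \autoref{prop:directI}, $\mathscr{T}_U$ is a direct summand of the vector bundle $\Hom(E^{1,0},E^{0,1})$ over $U$, so $\mathscr{T}_U$ is locally free. Here $E$ is an honest vector bundle on $U$: the VHS lives on $U$, and $\varphi$ maps into a fine moduli space, so $G$ is the pullback of the universal Hodge bundle. The singularities of $U$ are klt, being canonical (the interior of $(Y_c,D_c)$), so the Zariski--Lipman theorem for klt spaces (\cite[Thm.~6.1]{GKKP}) gives that $U$ is smooth. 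In particular $T_U=\mathscr{T}_Y(-\log D)|_U$ is a vector bundle and $U_\reg=U$.

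Third, the splitting of $d\varphi$. Because $U=U_\reg$, the orthogonal complement from \autoref{prop:directI} is holomorphic on all of $U$. It gives $\Hom(E_1^{1,0},E_1^{0,1}) = \operatorname{Im}_s(\tau_{\mathrm{KS},1})\oplus \mathscr{W}_1$ with $\operatorname{Im}_s(\tau_{\mathrm{KS},1})\cong T_U$ via $\tau_{\mathrm{KS},1}$. Moreover, by \autoref{lem:addendum} the image is already saturated on $U$, so $\tau_{\mathrm{KS},1}$ is a bundle injection. The full differential is $d\varphi=(\tau_{\mathrm{KS},i})_i$. Since its first component is already a subbundle injection, $d\varphi$ is injective at every point, so $\varphi$ is an immersion.

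The main obstacle is orthogonality of the splitting for the whole $d\varphi$, not just for one component. The plan is to apply \autoref{prop:directI} to every $\bV_i$, obtaining for each $i$ an orthogonal holomorphic subbundle $L_i=\operatorname{Im}(\tau_{\mathrm{KS},i})\cong T_U$. The image of $d\varphi$ is the graph-type subbundle $\{(\tau_{\mathrm{KS},i}(v))_i\}\subset\bigoplus_i L_i$. Its orthogonal complement in $\varphi^*T_{\A_g}$ is the orthogonal complement of $\bigoplus_i L_i$, namely $\bigoplus_i\mathscr{W}_i\oplus(\text{unitary part})$, together with the orthogonal complement of the graph inside $\bigoplus_i L_i$. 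The first piece is holomorphic by the above. The difficulty is holomorphicity of the second piece. I would try to show that the isomorphisms $L_1\cong T_U\cong L_i$ are isometries up to constants. The idea is that $\Hom(T_U,T_U)$ is a semistable bundle of slope zero whose flat/harmonic sections must be parallel, using stability of $\Omega^1$ to get that $\End(T_U)$ has only homotheties. Since a holomorphic isometry-up-to-scale composed with orthogonal projections is holomorphic, this would give the orthogonal splitting of $d\varphi$.
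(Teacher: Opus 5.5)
Your smoothness argument (\autoref{prop:directI} plus Zariski--Lipman for the canonical, hence klt, singularities of $U$) and your immersion argument ($\tau_{\mathrm{KS},1}$ maps $T_U$ isomorphically onto a subbundle, by \autoref{lem:addendum}) are fine and agree with the paper. They even give a holomorphic, but not yet orthogonal, splitting of $d\varphi$.

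\textbf{The gap is the orthogonality}, which is what \autoref{totgeoemb} actually uses. Write $\alpha_i:=\tau_{\mathrm{KS},i}\colon T_U\xrightarrow{\sim}L_i$ and $h_i$ for the Hodge metric. You correctly observe that $d\varphi(T_U)$ is the graph of $(\alpha_i)_i$ inside $\bigoplus_i L_i$. Its orthogonal complement there is holomorphic if and only if the Chern connections of the metrics $g_i:=\alpha_i^*h_i$ on $T_U$ all coincide.

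Your suggested mechanism does not deliver this. The endomorphism $A_i:=g_1^{-1}g_i$ comparing two of these metrics is only a smooth, positive, $g_1$-self-adjoint endomorphism of $T_U$. Stability of $\Omega^{[1]}_Y(\log D)$, even granting an extension argument across $D$, only tells you that \emph{holomorphic} endomorphisms are homotheties. Showing that $A_i$ is holomorphic (for self-adjoint $A_i$, equivalently parallel) is the same as showing that $g_1$ and $g_i$ have the same Chern connection, which is exactly what you want to prove. The slogan ``harmonic sections are parallel'' would require $(T_U,g_1)$ and $(T_U,g_i)$ to be Hermitian--Einstein for a common K\"ahler form, with control near $D$, and nothing in the setup provides that.

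Local considerations cannot close the step either. Each $(U,g_i)$ is locally Hermitian symmetric, since the argument of \autoref{totgeoemb} applies factorwise. Yet already the Poincar\'e metric on the disc and its pullback under $z\mapsto z/2$ have the same constant curvature and different Levi-Civita connections. Some genuinely global input is needed, for instance completeness coming from properness of $\varphi$, or a Schwarz-lemma or metric-rigidity comparison.

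\textbf{Comparison with the paper.} The paper takes the same route, but pairs each non-real $\bV_i$ with $\overline{\bV}_i$. It works with $H_i=E_i^{0,1}\otimes(E_i^{1,0})^\vee$, respectively $H_i=\Sym^2(E_i^{0,1})$ for real factors. This pairing is needed in any case:
\begin{itemize}
\item $\bigoplus_i\Hom(E_i^{1,0},E_i^{0,1})$ is not contained in $\Sym^2(G^{0,1})$, so your ``and hence of $\Sym^2(G^{0,1})$'' is not right as stated; only its intersection with $\Sym^2(G^{0,1})$ is relevant.
\item The pairing removes the graph problem for a factor and its conjugate. The polarisation identifies $\Hom(E_{\bar i}^{1,0},E_{\bar i}^{0,1})$, built from the Hodge bundles of $\overline{\bV}_i$, isometrically with $\Hom(E_i^{1,0},E_i^{0,1})$ via transposition.
\item For repeated isomorphic factors the metrics $g_i$ are proportional anyway.
\end{itemize}
For non-unitary factors that are neither conjugate nor isomorphic, however, the paper only states that the splittings from \autoref{prop:directI} are used ``simultaneously''. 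So the paper does not contain the argument you are missing. Your proposal breaks down exactly where the paper's proof is terse, and closing this step needs an additional global argument of the kind indicated above.
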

\begin{proof} 
Smoothness of $U$ follows as in the proof of the compact case given just above, so that the first important observation is that $U = U_\reg$.

Consider on $U$ the decomposition of $R^1f_*\mathbb{C}_A$ into irreducible
$\mathbb{C}$-variations of Hodge structures. For those factors $\mathbb{V}_i$ that are
not defined over $\mathbb{R}$, we denote by $\overline{\mathbb{V}}_i$ the respective
complex conjugate. After reordering the factors, as in \cite[Sec. 4]{VZ07} we write 
\begin{gather*}
R^1f_*\underline{\bC}_A=\mathbb{V}_1\oplus \overline{\mathbb{V}}_1\oplus \dots \oplus \mathbb{V}_{r'}\oplus \overline{\mathbb{V}}_{r'}\oplus\mathbb{V}_{r'+1} \oplus \dots \oplus \mathbb{V}_{r},
\end{gather*}
where $\mathbb{V}_i$ is defined over $\mathbb{R}$ for $r'+1 \leq i \leq r$. Let $(E_i^{1,0}\oplus E_i^{0,1},\theta_i)$ denote the Higgs bundle of $\mathbb{V}_i$. With the notations in \autoref{assumptionsineq}, we have that, for $i\leq r'$, the bundles $E_i^{0,1}$ and $(E_i^{1,0})^\vee$ occur as direct summands of $\mathrm{Sym}^2(\mathscr{G}^{0,1})|_U=\varphi^*(\mathscr{T}_{\A_g})$, and they intersect trivially. Up to renumbering, we can also assume that
\begin{gather*}
    \rk(E_i^{1,0}) \leq \rk(E_i^{0,1}) \quad \text{for}\ i\leq r'
\end{gather*}
and
\begin{gather*}
    \rk(E_i^{1,0})=\rk(E_i^{0,1}) \quad \text{for}\ r'+1\leq i\leq r.
\end{gather*}
Using this, we define the sheaf $H=\bigoplus_{i=1}^r H_i $ on $U$ by
\begin{gather*}
H_i:= \begin{cases}E_i^{0,1} \otimes (E_i^{1,0})^\vee & \text{if } i\leq r' \\
\mathrm{Sym}^2(E_i^{0,1}) & \text{if } r'+1\leq i\leq r.
\end{cases}
\end{gather*}

By construction, $H$ is a direct summand of $\mathrm{Sym}^2(G^{0,1})$ with holomorphic orthogonal complement, and the Kodaira-Spencer map \eqref{KS} maps into $H$. If $\mathbb{V}_i$ is non-unitary, the composite map
\begin{gather*}
  T_U \overset{\text{KS}}{\longrightarrow} H =\bigoplus_{i=1}^r H_i \overset{pr_i}{\longrightarrow} H_i.
\end{gather*}
is non-zero. By \autoref{prop:directI}, $T_U$ is a direct summand of $H_i$, with holomorphic orthogonal complement. On the other hand, the Kodaira-Spencer map vanishes on the
unitary factors. Therefore, using the splittings for all non-unitary
summands simultaneously, we obtain that $T_U$ is a direct summand of $H$ with holomorphic orthogonal complement. Since $H$ is itself a direct summand of $\mathrm{Sym}^2(G^{0,1})$ with holomorphic orthogonal complement, it follows that
$T_U$ is a direct summand of $\mathrm{Sym}^2(G^{0,1}) \cong \varphi^*T_{\A_g}$, whose orthogonal complement with respect to the natural metric on $\mathrm{Sym}^2(G^{0,1})$ is a holomorphic subbundle.

This implies the corresponding statement with respect to the Hermitian form associated with the Siegel metric on $\varphi^*T_{\A_g}$, since up to a scalar, it coincides with the Hodge metric on its tautological variation of Hodge structure, see \cite[Sect.~2]{FredianiColombo}. In particular,  the differential of the period map is injective at each point, and thus $\varphi$ is an immersion. 
\end{proof}

The proof of the following statement is a greatly expanded version of an argument that in \cite{MVZ} is attributed to Mok; in our approach, it plays a much more central role than in \emph{loc.~cit.}, as we obtain that the image of the period map is Hermitian symmetric directly, without a reduction to the equality case of the Miyaoka-Yau inequality. 

\begin{prop}[Totally geodesic embedding, cf.~{\cite[Claim~7.14]{MVZ}}]\label{totgeoemb}
Let $\tilde\varphi: \tilde U\ra \sieg_g$ be the lift of the period map from the universal cover $\tilde U$ of $U$ to Siegel upper half-space. 
The assumptions in \autoref{thm:intro1} imply that $\tilde\varphi: \tilde U\ra \sieg_g$ is a proper totally geodesic embedding. In particular, $\tilde\varphi(\tilde U)$ is a 2-dimensional hermitian symmetric subspace of $\sieg_g$.
\end{prop}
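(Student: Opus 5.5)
We want to show that $\tilde\varphi(\tilde U)$ is a totally geodesic complex submanifold of $\sieg_g$ and that $\tilde\varphi$ is a proper embedding onto it. Throughout we use three facts already established: $U$ is smooth, $d\varphi$ is injective, and $d\varphi(T_U)\subset\varphi^*T_{\A_g}$ has a holomorphic orthogonal complement with respect to the Siegel metric (\autoref{prop:orthsplit}).

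The first step is to show that the image is totally geodesic, via a second fundamental form argument. Give $U$ the metric $g_U:=\varphi^*g_{\mathrm{Siegel}}$; this is Kähler because $\varphi$ is an immersion. For a holomorphic subbundle of a Hermitian holomorphic bundle, the second fundamental form of the orthogonal complement is adjoint to that of the subbundle. A holomorphic orthogonal complement therefore forces the Chern connection of $\varphi^*T_{\A_g}$ to preserve $T_U$, so the $(1,0)$-part of the second fundamental form of the immersion vanishes. For an immersion of Kähler manifolds the second fundamental form is complex bilinear of type $(1,0)$. Hence the full second fundamental form of $\tilde\varphi:\tilde U\to\sieg_g$ vanishes, and $\tilde\varphi$ is a totally geodesic immersion.

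The second step, which I expect to be the main obstacle, is to upgrade this to a proper embedding. Here we need completeness. Properness of $\varphi:U\to\A_g$, which follows from non-extendability and \autoref{prop:propernesstomodel}, implies that $g_U$ is complete: a $g_U$-Cauchy sequence maps to a Cauchy sequence in $\A_g$, which converges because $\A_g$ is complete, and properness pulls the convergence back to $U$. So $(\tilde U,\tilde g_U)$ is a complete Kähler manifold immersed totally geodesically. Take a point $x$ and let $V=d\tilde\varphi(T_x\tilde U)$. Since the image is totally geodesic, $V$ is a Lie triple system, and $X:=\exp_{\tilde\varphi(x)}(V)$ is a closed, embedded, totally geodesic Hermitian symmetric subspace of the Cartan--Hadamard manifold $\sieg_g$. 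By completeness, $\tilde\varphi$ commutes with the exponential maps. Hence $\tilde\varphi(\tilde U)\subset X$, and $\tilde\varphi:\tilde U\to X$ is a local isometry between complete manifolds, that is, a Riemannian covering. Since $X$ is diffeomorphic to $\bR^4$ and simply connected, this covering is an isometry. So $\tilde\varphi$ is an embedding onto the closed submanifold $X$, and it is proper. (This is Mok's idea: completeness plus vanishing second fundamental form.)

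Finally, $X$ is a $2$-dimensional totally geodesic complex submanifold of $\sieg_g$, hence a Hermitian symmetric space of noncompact type. The remaining points to check carefully are the following:
\begin{itemize}
\item the type argument that the $(0,1)$-part vanishes automatically;
\item that the Hodge metric used in \autoref{prop:orthsplit} agrees with the Siegel metric up to scalars on each factor, so that orthogonality with respect to one gives orthogonality with respect to the other;
\item that orbifold points do not occur, because $\A_g$ is fine and $U$ is smooth.
\end{itemize}
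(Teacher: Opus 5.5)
Your proof is correct. The first step, where a holomorphic orthogonal complement forces the Chern (equivalently Levi-Civita) connection to preserve $T_U$, so that $\operatorname{II}^{2,0}=0$ and hence $\operatorname{II}=0$, is exactly the paper's argument. The second step reaches the same conclusion by a genuinely different, more direct route.

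\textbf{The paper's route.} It passes to the intermediate cover $U_\Gamma$ that trivialises the monodromy. It obtains properness of $\tilde\varphi_\Gamma$ by base change along the covering $\sieg_g\to\Gamma\backslash\sieg_g$, which uses that $\Gamma\subset\Gamma(N)$ is torsion free and that the fibre product is connected. From properness it deduces completeness on $U_\Gamma$. Injectivity follows because two points with the same image would give a non-constant geodesic loop in the Cartan--Hadamard manifold $\sieg_g$. It then proves the image is symmetric via geodesic reflections, and only at the end concludes $U_\Gamma=\tilde U$ from simple connectivity of the image.

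\textbf{Your route.} You get completeness on $U$ itself from properness of $\varphi$ and lift it to $\tilde U$. Comparing exponential maps then places $\tilde\varphi(\tilde U)$ inside $X=\exp_{\tilde\varphi(x)}(V)$. The standard lemma that a local isometry from a complete manifold to a connected one is a covering, together with $X\cong\bR^4$, then gives injectivity and properness in one stroke.

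\textbf{What each buys.} Your route avoids $\Gamma$, $U_\Gamma$ and the Cartesian square entirely. Note that for the embedding part you only need Cartan--Hadamard, so that $\exp(V)$ is a closed embedded $\bR^4$. The Lie triple system is what replaces the paper's reflection argument for showing that $X$ is symmetric. The paper's route makes explicit that $U_\Gamma=\tilde U$ and that $\varphi_\Gamma:U\to\Gamma\backslash\sieg_g$ is an embedding; the subsequent proof of the main theorem uses this. In your setting it follows quickly from equivariance of $\tilde\varphi$ and its injectivity. If $\rho(\gamma)=1$, then $\tilde\varphi(\gamma u)=\tilde\varphi(u)$, so $\gamma u=u$ and hence $\gamma=1$. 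Similarly, $\varphi_\Gamma(u)=\varphi_\Gamma(u')$ forces $\tilde u=\gamma\tilde u'$ for some $\gamma$, so $u=u'$.
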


\begin{proof} 
Let $\Gamma$ be the image of the monodromy representation associated with the period map $\varphi$. More explicitly, for some choice of base point $x\in U$, $\Gamma$ is the image of the representation $\pi_1(U,x)\ra \operatorname{Sp}(2g, \mathbb{Z})$ associated with the integral local system $R^1f_*\underline{\mathbb{Z}}$ of the family $f$.  Recall that, by assumption, $\varphi$ maps into a fine moduli space $\A_g$ of principally polarized Abelian varieties with a suitable $N$-level structure $\Gamma(N):=\ker\left(\operatorname{Sp}(2g,\mathbb{Z})\ra \operatorname{Sp}(2g, \mathbb{Z}/N\mathbb{Z})\right)$ with $N\geq 3$. It follows that $\Gamma\subset \Gamma(N)$ and, in particular, that $\Gamma$ is torsion free, as $\Gamma(N)$ is. The period map factors as
\begin{gather*}
   \varphi:  U\overset{\varphi_{\Gamma}}{\longrightarrow} \Gamma\backslash\sieg_g \rightarrow \A_g.
\end{gather*}
By our assumptions and by \autoref{prop:orthsplit}, $\varphi$ is a proper immersion. Since $\Gamma\backslash\sieg_g \rightarrow \A_g$ is a covering map,  $\varphi_{\Gamma}$ is a proper immersion as well. Let $U_{\Gamma}$ be the quotient of the universal cover $\tilde U$ by the kernel of the monodromy representation $\pi_1(U,x)\ra \Gamma$. In other words,  $U_{\Gamma}$ is  the minimal covering of $U$ on which the VHS is trivialised.  The period map $\varphi_{\Gamma}$ lifts to $U_{\Gamma}$ and we obtain the following commutative diagram with Cartesian central square:

\begin{equation*}
\begin{tikzcd}
\tilde U\arrow[r, " "]\arrow[dr, "\tilde \varphi"'] & U_{\Gamma}  \arrow[r, " "] \arrow[d, "\tilde \varphi_{\Gamma}"']\arrow[dr, phantom, "\ulcorner", very near start]
& U \arrow[d, "\varphi_{\Gamma}" ] \arrow[dr, "\varphi"]& \\
& \sieg_g \arrow[r, "\pi_{\Gamma}"] & \Gamma\backslash\sieg_g\arrow[r, " "] & \A_g.
\end{tikzcd}
\end{equation*}
In other words, the lift $\tilde\varphi_{\Gamma}$ is the base-change of $\varphi_{\Gamma}$ along the covering map $\pi_{\Gamma}$. Since $\varphi_{\Gamma}$ is proper and properness is preserved under base-change, it follows that $\tilde\varphi_{\Gamma}$ is proper as well, cf.~\cite[Prop.~3.4]{BrunebarbeShafarevich}.  In particular, the image $\tilde\varphi_{\Gamma}(U_{\Gamma})$ is a closed analytic subset in $\sieg_g$. We want to show that it is in fact a totally geodesic (complex) submanifold. 

Since $\tilde\varphi_{\Gamma}$ is an immersion with closed image, it is a local embedding. In the following, we work locally on $U_\Gamma$ and think of $\tilde\varphi_{\Gamma}(U_{\Gamma})\subset\sieg_g$ as a closed embedded submanifold. We consider the exact sequence of holomorphic tangent bundles 
\begin{gather}\label{eq:tangentexact}
    0\ra T_{U_{\Gamma}} \ra \tilde\varphi_{\Gamma}^*T_{\sieg_g} \overset{\pi}{\ra} N_{\tilde\varphi} \ra 0,
\end{gather}
where $N_{\tilde\varphi_{\Gamma}}=\tilde\varphi_{\Gamma}^*T_{\sieg_g}/T_{U_{\Gamma}}$ denotes the normal bundle. Let
\[
\operatorname{II}^{2,0}: \Sym^2T_{U_{\Gamma}} \ra N_{\tilde\varphi_{\Gamma}}
\]
denote the $(2,0)$-part of the $\mathbb{C}$-linear extension of the second fundamental form $\operatorname{II}$ of $\tilde\varphi_{\Gamma}(U_{\Gamma})\subset \sieg_g$ with respect to the Siegel metric. Thus, for $x\in U_{\Gamma}$ and $r,s$ local sections of $T_{U_{\Gamma}}$ near $x$, we have
\[
\operatorname{II}^{2,0}_x(r \odot s) =\pi (\nabla_r s)_x,
\]
where $\nabla$ is the $\mathbb{C}$-linear extension of the Levi-Civita connection of the Siegel metric. We want to show that, under the assumptions of  \autoref{thm:intro1}, we have that $$\operatorname{II}^{2,0}_x\equiv 0$$ 
at all points $x\in U_{\Gamma}$. Note that this implies that the (real) Riemannian second fundamental form $\operatorname{II}$ vanishes at each point, i.e., that $\tilde\varphi_{\Gamma}$ is a totally geodesic immersion. Indeed, the Riemannian second
fundamental form $\operatorname{II}$ of a complex submanifold of a K\"ahler manifold is
compatible with the complex structure, see e.g.~\cite[Prop.~IX.9.1]{kobayashi-nomizu-II}. After complexification, this implies that $\operatorname{II}_{\mathbb C}$ has only components of type $(2,0)$ and $(0,2)$. Moreover, since $\operatorname{II}$ is real, the vanishing of the $(2,0)$ part also implies the vanishing of the $(0,2)$ part. 

By \autoref{prop:orthsplit}, the inclusion $T_U \hookrightarrow \varphi^*T_{\A_g}$ splits holomorphically and orthogonally with respect to the Hermitian metric induced by the Siegel metric. Passing to the covering $U_{\Gamma}$, we obtain a holomorphic splitting 
\begin{gather}\label{eq:split}
\tilde \varphi_{\Gamma}^*T_{\sieg_g}\cong T_{U_{\Gamma}}\oplus N    
\end{gather}
where $N$ denotes the orthogonal complement of $T_{U_{\Gamma}}$ in $\tilde \varphi_{\Gamma}^*T_{\sieg_g}$ with respect to the Siegel metric. Hence, $N$ is a holomorphic vector bundle and,  owing to the exact sequence \eqref{eq:split}, $N\cong N_{\tilde\varphi_{\Gamma}}$.  Now recall that, since $\sieg_g$ is K\"ahler, the Levi-Civita connection of the
Siegel metric induces on $T_{\sieg_g}\cong T^{1,0}\sieg_g$ the Chern connection
with respect to the Hermitian metric induced by the Siegel metric, see e.g.
\cite[Prop.~4.A.9]{huybrechts}.  Since the splitting in \eqref{eq:split} is both orthogonal and holomorphic, both the holomorphic structure as well as the Hermitian metric on  $\tilde\varphi_{\Gamma}^*T_{\sieg_g}$ are compatible with the decomposition. It follows that the Chern connection $\tilde\varphi_{\Gamma}^*T_{\sieg_g}$ is the direct sum of the Chern connections of $T_{\tilde U}$ and $N$. In particular, $\nabla$ preserves $T_{\tilde U}$, i.e., for all local sections $r,s$ of $T_{U_\Gamma}$ we have that $(\nabla_r s)$ is again a section of $T_U$, proving the vanishing of the second fundamental form. 

We thus have that $\tilde\varphi_{\Gamma}$ is a proper totally geodesic immersion, and we want to show that it is injective. For this, we consider on $U_{\Gamma}$ the pull-back of the Siegel metric via $\tilde\varphi_{\Gamma}$. Since the Siegel metric on $\sieg_g$ is geodesically complete and  $\tilde\varphi_{\Gamma}$ is proper, we have that the pull-back metric on $U_{\Gamma}$ is also geodesically complete. Indeed, under these assumptions one can directly check the convergence of Cauchy
sequences in $U_{\Gamma}$, showing that the pull-back metric on $U_{\Gamma}$ is
metrically complete. Together with the Hopf-Rinow theorem, this shows geodesic completeness.\footnote{See e.g.~\cite[Thm.~2.8, Ch.~7]{doCarmo} and
\cite[Cor.~2.10, Ch.~7]{doCarmo} for the analogous statement for closed embedded
submanifolds of complete Riemannian manifolds (the same argument applies in
our setting, using properness)}.  Now, assume that $x,y\in U_{\Gamma}$ satisfy $\tilde\varphi_{\Gamma}(x)=\tilde\varphi_{\Gamma}(y)$, and let $\gamma$ be a geodesic in $U_{\Gamma}$ between $x$ and $y$. Since $\tilde\varphi_{\Gamma}$ is totally geodesic, we have that $\tilde\varphi_{\Gamma}\circ \gamma$ is a geodesic in $\sieg_g$ as well, see \cite[Prop.~2.9, Ch.~6]{doCarmo}. Moreover, since $x$ and $y$ are assumed to have the same image in $\sieg_g$, $\tilde\varphi_{\Gamma}\circ \gamma$ has the same endpoints. On the other hand, $\sieg_g$ is a Hermitian symmetric space of non-compact type and, in particular, it is complete, simply connected, and has non-positive sectional curvature, see \cite[Prop.~6.3, Prop.~6.39, Prop.~6.45]{ziller}. By the Cartan-Hadamard theorem
\cite[Thm.~3.1, Ch.~7]{doCarmo}, the exponential map at every point is a
diffeomorphism. Hence, $\sieg_g$ has no non-constant closed geodesics. Hence $\tilde\varphi_{\Gamma}\circ \gamma$ is constant and, since $\tilde\varphi_{\Gamma}$ is an immersion, it follows that $\gamma$ is constant as well and thus that $x=y$. This shows that $\tilde \varphi_{\Gamma}$ is a proper totally geodesic embedding. 

Finally, we show that $\tilde\varphi_{\Gamma}(U_{\Gamma})$ is globally a symmetric subspace of Siegel space. To do so, we show it is preserved by the geodesic reflections of $\sieg_g$ at any point in $\tilde\varphi_{\Gamma}(U_{\Gamma})$, see \cite[Prop.~6.28]{ziller}. Let $s_x: \sieg_g\ra \sieg_g$ be the symmetry at the point $x\in \tilde\varphi_{\Gamma}(U_{\Gamma})\subset \sieg_g$.  For a point $y\in \tilde\varphi_{\Gamma}(U_{\Gamma})$, let $\gamma$ be a geodesic in $\tilde\varphi_{\Gamma}(U_{\Gamma})$ between $x$ and $y$. Since $\tilde\varphi_{\Gamma}(U_{\Gamma})$ is totally geodesic, $\gamma$ is also a geodesic in $\sieg_g$. Now $s_x$ flips the geodesic through $x$, thus $s_x(\gamma(t))=\gamma(-t)$ and, since $\tilde\varphi_{\Gamma}(U_{\Gamma})$ is geodesically complete, this implies that $s_x(y)\in \tilde\varphi_{\Gamma}(U_{\Gamma})$. Thus $s_x(\tilde\varphi_{\Gamma}(U_{\Gamma}))=\tilde\varphi_{\Gamma}(U_{\Gamma})$ and therefore $\tilde\varphi_{\Gamma}(U_{\Gamma})$ is a symmetric subspace of Siegel space. Thus, it is itself a Hermitian symmetric space of non-compact type. In particular, it is simply connected. Thus $U_{\Gamma}=\tilde U$ and $\tilde\varphi_{\Gamma}=\tilde\varphi$ is a proper totally geodesic embedding. In particular, the period map $\varphi_{\Gamma}$ into $\Gamma\backslash\sieg_g $ is an embedding.
\end{proof}

\begin{proof}[Proof of \autoref{thm:intro1} (non-compact case)]
We showed in \autoref{totgeoemb} that the period map 
$$\varphi_{\Gamma}: U \ra \Gamma\backslash\sieg_g$$ 
where $\Gamma$ is the monodromy of the period map $\varphi$, is a totally geodesic embedding. Let $Z$ be the image of $\varphi_{\Gamma}$. The symmetric space $X$ uniformising $Z$, i.e., the image of the lift $\tilde \varphi$ into $\sieg_g$, is a $2$-dimensional symmetric subspace of Siegel space, and hence a Hermitian symmetric space of non-compact type. By the classification of Hermitian symmetric spaces of non-compact type \cite[Table~5,p.~518]{Helgason}, it follows that either 
\begin{gather}\label{eq:2dimsymm}
   X\cong \mathbb{B}_2\quad \quad \text{or}\quad \quad X\cong \mathbb{B}_1 \times \mathbb{B}_1, 
\end{gather}
i.e., either $X$ is a $2$-dimensional ball or is the product of two one-dimensional discs. In the following we show that under the assumptions of \autoref{thm:intro1} only the first possibility can occur.

Since the period map $\varphi$ is proper and algebraic, its image $\varphi(U)=Z= \pi(X)$ is a closed subvariety in $\mathcal{A}_g$ to which we may apply \autoref{prop:mumford}. If $X\cong \mathbb{B}_1 \times \mathbb{B}_1$, the resulting Mumford compactification $(Y', D')$ of $Z$
has the property that $\Sym^{m}\Omega^1_{Y'}(\log D')$ is not stable for all $m\geq 1$ with respect to $K_{Y'}+D'$; more specifically, for all $m\geq 1$ the line bundle 
\begin{gather}\label{eq:destabilise}
  \mathscr{O}\bigl(m(K_{Y'}+D')\bigr) \hookrightarrow \Sym^{2m}\Omega^1_{Y'}(\log D')
\end{gather}
is a ``destabilising'' direct summand of $\Sym^{2m}\Omega^1_{Y'}(\log D')$. Now, choose $m$ such that $m(K_Y+D)$ is Cartier. We are aiming to show the following.

\medskip

\noindent
\textbf{Main Claim:} \emph{$\Sym^{[2m]}\Omega^{[1]}_Y(\log D)$ is not stable with respect to $K_Y+D$.}

\medskip

\noindent 
This will contradict our assumption \eqref{eq:stab_assump} and will show that the second possibility in \eqref{eq:2dimsymm} does not occur. Hence, $X$ is a ball and $U\cong Z$ is a ball quotient, thus concluding the proof.

To establish the Main Claim we compare the two compactifications $(Y,D)$ and $(Y', D')$ of $U\cong Z$. Recall that $(Y,D)$ is a log-canonical orbifold pair such that $U=Y\setminus D$ and $K_Y+D$ is big and nef, see \autoref{assumptionsineq}, while $(Y',D')$ is a smooth snc compactification of $U$ such that $K_{Y'}+D'$ is big and nef, see \autoref{prop:mumford}. To compare them, we consider a birational blow-up which is compatible with the two compactifications, i.e., we choose a smooth projective variety $\bar Y$ and a normal crossing divisor $\bar D\subset \bar Y$ such that $\bar Y\setminus \bar D=U$, the inclusion $i_1: U\hookrightarrow Y$ extends to a birational morphism $\sigma: \bar Y \ra Y$, and the inclusion $i_2: U\hookrightarrow Y'$ extends to a birational morphism $\bar \delta: \bar Y \ra Y'$ 
\begin{equation*}
\begin{tikzcd}
& (\bar Y, \bar D) \arrow[dl, "\sigma" '] \arrow[dr, "\bar\delta"] & \\
(Y,D) & & (Y', D').
\end{tikzcd}
\end{equation*}
Since $(Y,D)$ is log-canonical and since $\sigma$ is a log-resolution of the pair, the discrepancy formula \eqref{eq:discrepancy} yields
\begin{gather*}
    	K_{\bar Y} + \sigma_*^{-1}(D) \;\sim_{\bQ} \; \sigma^*(K_Y+D)+ \sum_{i}a_i E_i,
\end{gather*}
where $\sigma_*^{-1}(D)$ is the strict transform, the $E_i\subset \bar Y$ are $\sigma$-exceptional, and the coefficients $a_i\geq - 1$. It follows that
\begin{gather*}
    	K_{\bar Y} + \bar D \;\sim_{\bQ} \; \sigma^*(K_Y+D)+ \sum_{i}(a_i+1) E_i.
\end{gather*}
Setting $L:=\sigma^*(K_Y+D)$, and $E:=\sum_{i}(a_i+1) E_i$, we thus have that $E$ is effective, and there exists a meromorphic function $f\neq 0$ on $\bar Y$ such that 
\begin{gather*}
    K_{\bar Y} + \bar D =L+E+\frac{1}{m} \operatorname{div}(f).
\end{gather*}
Since $K_Y+D$ is nef, $L$ is nef. In particular, $P:=L+\frac{1}{m}\operatorname{div}(f)$ is nef as well. Summing up we have that 
\begin{gather}\label{eq:Zariski1}
    	K_{\bar Y} + \bar D = P+ E
\end{gather}
where $P$ is nef, and $E$ is effective with negative definite intersection matrix. Moreover, for each irreducible component $C$ of $E$ we have $P. C=0$ as $C$ is exceptional, $L=\sigma^*(K_Y+D)$, and $\operatorname{div}(f)$ is numerically trivial. In other words, \eqref{eq:Zariski1} is the Zariski decomposition of $K_{\bar Y} + \bar D$, see e.g.~\cite[Thm.~1.13.22]{Kawamata2024}. Similarly, the canonical bundle formula for $\bar\delta$ yields 
\begin{gather*}
    	K_{\bar Y} + \bar D = \bar\delta^*(K_{Y'}+D') + E' + \operatorname{div}(g)
\end{gather*}
where $E'$ is effective and exceptional, and $g\neq 0$. Setting $L:=\bar\delta^*(K_{Y'}+D')$ and $P':=L'+\operatorname{div}(g)$ we get
\begin{gather}\label{eq:Zariski2}
    	K_{\bar Y} + \bar D = P'+ E'
\end{gather}
where again $P'$ is nef, $E'$ is effective with negative definite intersection for and for each irreducible component $C$ of $E'$ we have $P' .C=0$. Thus \eqref{eq:Zariski2} is also the Zariski decomposition of $K_{\bar Y} + \bar D$. The uniqueness of such a decomposition then implies that $P=P'$. In particular, $mL'+\operatorname{div}(g)=mL+m\operatorname{div}(f)$, yielding that
\begin{equation}\label{eq:pullbackequality}
  \mathscr{O}_{\bar Y} (mL') =  \bar \delta^*\mathscr{O}_{Y'}(m(K_{Y'}+D')) \cong \sigma^*\mathscr{O}_{ Y}(m(K_Y+D)) = \mathscr{O}_{\bar Y}(mL).
\end{equation}

Our goal is to use the injection  \eqref{eq:destabilise} to obtain a corresponding subsheaf in $\Sym^{[2m]}\Omega^{[1]}_Y(\log D)$ that violates stability. Using \eqref{eq:pullbackequality}, the pullback of $\iota$ via $\bar\delta$ yields a sheaf morphism $\psi$ as follows. 
\begin{equation*}
\begin{tikzcd}
 \mathscr{O}_{\bar Y}(mL) \cong \mathscr{O}_{\bar Y}(mL') \arrow[r, "\bar\delta^*\iota"] \arrow[dr, "\psi"'] & \bar\delta^*\Sym^{2m}\Omega^1_{Y'}(\log D') \arrow[d, ""] \\
& \Sym^{2m}\Omega^1_{\bar Y}(\log \bar D).
\end{tikzcd}
\end{equation*}
Since it is generically non zero, $\psi$ is injective. Let $W\subset \bar Y$ be the complement of the $\sigma$-exceptional set in $\bar Y$. Restricting $\psi$ to $W$ we arrive at the following diagram of sheaf morphisms
\begin{equation*}
\begin{tikzcd}
\mathscr{O}_{\bar Y}(mL)|_W \arrow[r, hook] \arrow[d, equal]
& \Sym^{2m}\Omega^1_{\bar Y}(\log \bar D)|_W \arrow[d, equal ] \\
 \sigma^*\bigl(\mathscr{O}_{Y'}(m(K_{Y'+D'}))\bigr)|_W \arrow[r, hook] & \sigma^*\bigl(\Sym^{[2m]}\Omega^{[1]}_Y(\log D)\bigr)|_W.
\end{tikzcd}
\end{equation*}
Pushing this forward to $Y$ we obtain  an injection 
\begin{gather*}
    \psi_0':\mathscr{O}_Y(m(K_Y+D))|_{Y\setminus \sigma(Y\setminus W)}\hookrightarrow  \Sym^{[2m]}\Omega^{[1]}_Y(\log D)|_{Y\setminus \sigma(Y\setminus W)}.
\end{gather*}
Since $\sigma(Y\setminus W)$ is a finite set of points, and the sheaves on both sides are reflexive, $\psi_0'$ extends to an injective morphism of sheaves $\psi':\mathscr{O}_Y(m(K_Y+D))\hookrightarrow  \Sym^{[2m]}\Omega^{[1]}_Y(\log D).$ Since $\mathscr{O}_Y(m(K_Y+D))$ and $\Sym^{[2m]}\Omega^{[1]}_Y(\log D)$ by construction have the same $(K_Y+D)$-slope, the latter sheaf is not stable, which establishes the Main Claim and concludes the proof.
\end{proof}

\subsection{Relation to approach via MY-equality}\label{subsect:comparison_our_approach_with_MVZ}

In the proof above, after observing smoothness of $U$ we showed that the period map lifts to a totally geodesic embedding $\tilde\varphi: \tilde U\ra \sieg_g$ from the universal cover of $U$ to Siegel upper-half space, and then used the classification of two-dimensional symmetric spaces to single out the ball case. In this last section, we wish to compare this approach to the one adopted in \cite{MVZ}. Here, the strategy of the proof rather first aims to show that the Arakelov and length equalities imply that the base $(Y,D)$ of the family satisfies equality in the Miyaoka-Yau inequality, thus showing that $U$ is a ball quotient; total geodesy of the image is then shown in a second step.

We wish to expand on this, focusing on the case where $(Y, D)$ is a log-canonical orbifold surface pair, and comparing the compact with the non-compact case. For this, we will go through the argument of \cite[Claim 7.12]{MVZ}, which rests on showing the following: \\

\noindent \textbf{Key claim for ball quotient uniformisation via MY: }\emph{
In addition to the assumptions made in \autoref{assumptionsineq}, assume that $\Sym^{[m]}\bigl(\Omega^1_{Y}(\log D)\bigr)$  is slope-stable with respect to $K_{Y} + D$ for all $m$. Let $\mathbb{V}$ be a non-unitary irreducible $\mathbb{C}$-sub-local system of $R^1f_*\underline{\bC}_A$, and let $(\mathscr{E}=\mathscr{E}^{1,0}\oplus \mathscr{E}^{0,1},\theta)$ be the associated log-Higgs bundle on $Y$. If $(\mathscr{E}, \theta)$ satisfies both the Arakelov equality  
and the length equality 
, then $U$ is a ball quotient.}\\

The starting point to argue in this direction is the fact that, by part (i) of \autoref{prop:directfactor}, the Arakelov and length equalities imply that $\langle \det(\mathscr{E}^{1,0})\rangle_s$ is a system of Hodge bundles direct factor of $\bigwedge^{[h]}(\mathscr{E},\theta)$ on $U_{\reg}$.  Moreover, by part (ii) of the same result, we have $\widehat{c}_1(\langle \det(\mathscr{E}^{1,0})\rangle_s)=\widehat{c}_2(\langle \det(\mathscr{E}^{1,0})\rangle_s)=0$. The goal would now be to manipulate the vanishing of these Chern classes to get a Miyaoka-Yau equality.

As in \cite[Claim 7.12]{MVZ}, we consider for simplicity the case where there exists an orbifold line bundle $\mathscr{L}$ such that $\mathscr{L}^{\otimes [h]}\simeq \det(\mathscr{E}^{1,0}).$
Set $\mathscr{T}:=\mathscr{T}_Y(-\log D)$
and consider the log-Higgs sheaf
$$
(\mathscr{F}=\mathscr{F}^{1,0}\oplus \mathscr{F}^{0,1}=\mathscr{L}\oplus (\mathscr{L}[\otimes] \mathscr{T}),\  \mathscr{L} [\otimes] \mathscr{T}_Y(-\log D)  \ra \mathscr{L}[\otimes] \mathscr {T} ).
$$
The $h$-th symmetric power $\Sym^{[h]}(\mathscr{F})$ has $(h-m,m)$-piece given by $$\Sym^{[h-m]}\mathscr{L}[\otimes] \Sym^{[m]}( \mathscr{L}\otimes \mathscr{T}) \simeq
\det(E^{1,0})\otimes \Sym^{[m]} \mathscr{T}.$$ Thus, $\Sym^{[h]}(F)$ is isomorphic to
$\langle \det(\mathscr{E}^{1,0})\rangle$. Now, by \autoref{lem:addendum}, we know that $\langle \det(\mathscr{E}^{1,0})\rangle$ is isomorphic to  $\langle \det(\mathscr{E}^{1,0})\rangle_s$ on $U$.

\subsubsection{The compact case} When $U=Y$, we thus have that $\Sym^{[h]}(\mathscr{F})$ is isomorphic to
$\langle \det(E^{1,0})\rangle_s$. Hence, the vanishing of the first and second Chern classes of $\langle \det(E^{1,0})\rangle_s$ implies that $\widehat c_1(\Sym^{[h]}(\mathscr{F}))=\widehat c_2(\Sym^{[h]}(\mathscr{F}))=0.$ Hence, the discriminant $\widehat\Delta(\mathrm{Sym}^h(\mathscr{F}))$ also vanishes. 
By \cite[Lemma 3.3]{VZ07}, the discriminant of the symmetric power $\widehat\Delta(\Sym^{[h]}(\mathscr{F}))=0$ if and only if the $\widehat\Delta(\mathscr{F})=0$. On the other hand, $\mathscr{F}=\mathscr{L}\otimes(\mathscr{O}_Y\oplus \mathscr{T}),$ and the discriminant is invariant under tensoring with a line bundle. Hence $0=\widehat\Delta(\mathscr{F})=\widehat\Delta(\mathscr {O}_Y\oplus \mathscr{T}).$ We obtain
$3 \cdot \widehat{c}_2({T_Y})- \widehat{c}_1({T_Y})^2 =0$. The Miyaoka-Yau uniformization theorem applies, and $U=Y$ is uniformised by the two-dimensional complex ball. This approach also works in higher dimensions, leading to \autoref{prop:MVZnonef}.

\subsubsection{The non-compact case}\label{subsect:problemsinMVZ} When $D\neq \emptyset$, one would like to use the same strategy. Clearly, a key step in the argument above is the vanishing of the Chern classes of $\Sym^{[h]}(\mathscr{F})$, deduced from the vanishing of the Chern classes of$\langle \det(\mathscr{E}^{1,0})\rangle_s$. At this point, it is not clear to us how to do this, as $\Sym^{[h]}(\mathscr{F})$ is a priori just isomorphic to $\langle \det(\mathscr{E}^{1,0})\rangle$, which coincides with its saturation $\langle \det(\mathscr{E}^{1,0})\rangle_s$ only on $U$, cf.~\autoref{lem:addendum}, or, more precisely, on the open subset $U'$ of $Y$ where $\omega_Y(D)$ is ample, cf.~\cite[Addendum 5.2]{MVZ}.  The problem seems to already be present in similar arguments carried out in \cite{VZ07}, and is slightly hidden by a change of notation that suppresses the difference between $\langle \det(\mathscr{E}^{1,0})\rangle$ and its saturation, see just before \cite[Lemma 7.2]{MVZ}.

\vspace{0.4cm}

\begin{small}
\printbibliography
\end{small}

\ \\
\bigskip
\ \\

\begin{center}
    --------------------------------------------------------------------------------------
\end{center}

\ \\
\bigskip
\ \\

\end{document}